\newcolumntype{L}[1]{>{\raggedright\let\newline\\\arraybackslash\hspace{0pt}}m{#1}}
\newcolumntype{C}[1]{>{\centering\let\newline\\\arraybackslash\hspace{0pt}}m{#1}}
\newcolumntype{R}[1]{>{\raggedleft\let\newline\\\arraybackslash\hspace{0pt}}m{#1}}
\newcommand{\thickhline}{%
	\noalign {\ifnum 0=`}\fi \hrule height 1pt
	\futurelet \reserved@a \@xhline
}
\newcolumntype{"}{@{\hskip\tabcolsep\vrule width 1pt\hskip\tabcolsep}}
\newcolumntype{?}{!{\vrule width 1pt}}
\newcommand{\ie}{\textit{i.e.}}
\newcommand{\eg}{\textit{e.g.}}
\newcommand{\Ro}{\uppercase\expandafter{\romannumeral1}}
\newcommand{\Rt}{\uppercase\expandafter{\romannumeral2}}
\newcommand{\Rthree}{\uppercase\expandafter{\romannumeral3}}
\newcommand{\Rfour}{\uppercase\expandafter{\romannumeral4}}
\newcommand{\ro}{\romannumeral1}
\newcommand{\rt}{\romannumeral2}
\newcommand{\Rd}{\mathbb{R}^d}
\def\calH{\mathcal{H}}
\mathchardef\mhyphen="2D
\newlength{\fixboxwidth}
\newtheorem{lemma}{Lemma}
\newtheorem{theorem}{Theorem}
\newcommand{\NN}{{\mathbb N}}
\newcommand{\RR}{{\mathbb R}}
\newcommand{\EE}{{\mathbb E}}
\newcommand{\PP}{\mathbb P}
\newcommand{\QQ}{\mathbb Q}
\newcommand{\HEE}{\hat{\EE}}
\def\Fcal{\mathcal F}
\def\Gcal{\mathcal G}
\def\Hcal{\mathcal H}
\def\Wcal{\mathcal W}
\def\Xcal{\mathcal X}
\def\hat{\widehat}
\def\epsilon{\varepsilon}
\def\var{{\rm var}}
\newcommand{\bGnn}{\bar{G}_{\nu_n}}
\declaretheoremstyle[notefont=\bfseries,notebraces={}{},%
headpunct={},postheadspace=1em]{mystyle}
\declaretheorem[style=mystyle,numbered=no,name=Theorem]{thm-hand}
\newcounter{lastnote}
\begin{document}

\title{On the Optimality of Gaussian Kernel Based Nonparametric Tests against Smooth Alternatives$^\ast$}

\date{(\today)}

\author{Tong Li and Ming Yuan$^\dag$\\
Columbia University}
\maketitle

\begin{abstract}
Nonparametric tests via kernel embedding of distributions have witnessed a great deal of practical successes in recent years. However, statistical properties of these tests are largely unknown beyond consistency against a fixed alternative. To fill in this void, we study here the asymptotic properties of goodness-of-fit, homogeneity and independence tests using Gaussian kernels, arguably the most popular and successful among such tests. Our results provide theoretical justifications for this common practice by showing that tests using Gaussian kernel with an appropriately chosen scaling parameter are minimax optimal against smooth alternatives in all three settings. In addition, our analysis also pinpoints the importance of choosing a diverging scaling parameter when using Gaussian kernels and suggests a data-driven choice of the scaling parameter that yields tests optimal, up to an iterated logarithmic factor, over a wide range of smooth alternatives. Numerical experiments are also presented to further demonstrate the practical merits of the methodology.
\end{abstract}

\footnotetext[1]{
Research supported in part by NSF Grant DMS-1803450.}
\footnotetext[2]{
Address for Correspondence: Department of Statistics, Columbia University, 1255 Amsterdam Avenue, New York, NY 10027.}

\newpage
\section{Introduction}
Tests for goodness-of-fit, homogeneity and independence are central to statistical inferences. Numerous techniques have been developed for these tasks and are routinely used in practice. In recent years, there is a renewed interest on them from both statistics and other related fields as they arise naturally in many modern applications where the performance of the classical methods are less than satisfactory. In particular, nonparametric inferences via the embedding of distributions into a reproducing kernel Hilbert space (RKHS) have emerged as a popular and powerful technique to tackle these challenges. The approach immediately allows for easy access to the rich machinery for RKHS and has found great successes in a wide range of applications from causal discovery to deep learning. See, e.g., \cite{muandet2017kernel} for a recent review.

More specifically, let $K(\cdot,\cdot)$ be a symmetric and positive definite function defined over $\Xcal\times\Xcal$, that is $K(x,y)=K(y,x)$ for all $x,y\in \Xcal$, and the Gram matrix $[K(x_i,x_j)]_{1\le i,j\le n}$ is positive definite
for any distinct $x_1,\ldots,x_n\in \Xcal$. The Moore-Aronszajn Theorem indicates that such a function, referred to as a kernel, can always be uniquely identified with a RKHS $\calH_K$ of functions over $\Xcal$. The embedding
$$
\mu_{\PP}(\cdot) :=\int_{\Xcal} K(x,\cdot)\PP(dx),
$$
maps a probability distribution $\PP$ into $\calH_K$. The difference between two probability distributions $\PP$ and $\QQ$ can then be conveniently measured by
$$
\gamma_K(\PP,\QQ):=\|\mu_\PP-\mu_\QQ\|_{\calH_K}.
$$
Under mild regularity conditions, it can be shown that $\gamma_K(\PP,\QQ)$ is an integral probability metric so that it is zero if and only if $\PP=\QQ$, and
$$
\gamma_K(\PP,\QQ)=\sup_{f\in \Hcal_K: \|f\|_{\calH_K}\le 1} \int_{\Xcal} fd\left(\PP-\QQ\right).
$$
As such, $\gamma_K(\PP,\QQ)$ is often referred to as the \emph{maximum mean discrepancy} (MMD) between $\PP$ and $\QQ$. See, \eg, \cite{sriperumbudur2010hilbert} or \cite{gretton2012kernel} for details. In what follows, we shall drop the subscript $K$ whenever its choice is clear from the context. It was noted recently that MMD is also closely related to the so-called energy distance between random variables \citep{szekely2007measuring, szekely2009brownian} commonly used to measure independence. See, e.g., \cite{sejdinovic2012equivalence, lyons2013distance}.

Given a sample from $\PP$ and/or $\QQ$, estimates of the $\gamma(\PP,\QQ)$ can be derived by replacing $\PP$ and $\QQ$ with their respective empirical distributions. These estimates can subsequently be used for various statistical inferences. Here are several notable examples that we shall focus on in this work.

\paragraph{Goodness-of-fit tests.} The goal of goodness-of-fit tests is to check if a sample comes from a pre-specified distribution. Let $X_1,\cdots,X_n$ be $n$ independent $\Xcal$-valued samples from a certain distribution $\PP$. We are interested in testing if the hypothesis $H_0^{\rm GOF}:\ \PP=\PP_0$ holds for a fixed $\PP_0$. Deviation from $\PP_0$ can be conveniently measured by $\gamma(\PP,\PP_0)$ which can be readily estimated by:
\begin{equation*}
\gamma(\hat{\PP}_n,\PP_0):=\sup_{f\in \Hcal(K): \|f\|_K\le 1} \int_\Xcal fd\left(\hat{\PP}_n-\PP_0\right),
\end{equation*}
where $\hat{\PP}_n$ is the empirical distribution of $X_1,\cdots,X_n$. A natural procedure is to reject $H_0$ if the estimate exceeds a threshold calibrated to ensure a certain significance level, say $\alpha$ ($0<\alpha<1$).

\paragraph{Homogeneity tests.}  Homogeneity tests check if two independent samples come from a common population. Given two independent samples $X_1,\cdots,X_n\sim_{\rm iid}\PP$ and $Y_1,\cdots, Y_m\sim_{\rm iid}\QQ$, we are interested in testing if the null hypothesis $H_0^{\rm HOM}: \PP=\QQ$ holds. Discrepancy between $\PP$ and $\QQ$ can be measured by $\gamma(\PP, \QQ)$, and similar to before, it can be estimated by the MMD between $\hat{\PP}_n$ and $\hat{\QQ}_m$:
\begin{equation*}
\gamma(\hat{\PP}_n,\hat{\QQ}_m):=\sup_{f\in \Hcal(K): \|f\|_K\le 1} \int_\Xcal fd\left(\hat{\PP}_n-\hat{\QQ}_m\right).
\end{equation*}
Again we reject $H_0$ if the estimate exceeds a threshold calibrated to ensure a certain significance level.

\paragraph{Independence tests.}  How to measure or test of independence among a set of random variables is another classical problem in statistics. Let $X=(X^1,\ldots, X^k)^\top \in \Xcal_1\times\cdots\times\Xcal_k$ be a random vector. If the random vectors $X^1,\ldots,X^k$ are jointly independent, then the distribution of $X$ can be factorized:
$$H_0^{\rm IND}:\qquad \PP^{X}=\PP^{X^1}\otimes \cdots\otimes \PP^{X^k}.$$
Dependence among $X^1,\ldots, X^k$ can be naturally measured by the difference between the joint distribution and the product distribution evaluated under MMD:
$$
\gamma(\PP^{X},\PP^{X^1}\otimes \cdots\otimes \PP^{X^k})=\|\mu_{\PP^{X}}-\mu_{\PP^{X^1}\otimes \cdots\otimes \PP^{X^k}}\|_{\calH_K}.
$$
When $d=2$, $\gamma^2(\PP^{X},\PP^{X^1}\otimes \PP^{X^2})$ can be expressed as the squared Hilbert-Schmidt norm of the cross-covariance operator associated with $X^1$ and $X^2$ and is therefore referred to as Hilbert-Schmidt independence criterion \citep[HSIC;][]{gretton2005measuring}. The more general case as given above is sometimes referred to as dHSIC \citep[see, e.g.,][]{pfister2018kernel}. As before, we proceed to reject the independence assumption when $\gamma(\hat{\PP}^{X}_n,\hat{\PP}^{X^1}_{n}\otimes \cdots\otimes \hat{\PP}^{X^k}_n)$ exceed a certain threshold where $\hat{\PP}_n^{X}$ and $\hat{\PP}^{X^j}_{n}$ are the empirical distribution of $X$ and $X^j$ respectively.
\vskip 20pt

In all these cases the test statistic, namely $\gamma^2(\hat{\PP}_n,\PP_0)$, $\gamma^2(\hat{\PP}_n,\hat{\QQ}_m)$ or $\gamma^2(\hat{\PP}_n,\hat{\PP}_n^{X^1}\otimes\cdots\otimes\hat{\PP}_n^{X^k})$, is a V-statistic. Following standard asymptotic theory for V-statistics \citep[see, e.g.,][]{serfling2009approximation}, it can be shown that under mild regularity conditions, when appropriately scaled by the sample size, they converge to a mixture of $\chi^2_1$ distribution with weights determined jointly by the underlying probability distribution and the choice of kernel $K$. In contrast, it can also be derived that for a fixed alternative, $\gamma^2(\hat{\PP}_n,\PP_0)\to_p \gamma^2(\PP,\PP_0)$, $\gamma^2(\hat{\PP}_n,\hat{\QQ}_m)\to_p \gamma^2(\PP,\QQ)$ and $\gamma^2(\hat{\PP}_n,\hat{\PP}_n^{X^1}\otimes\cdots\otimes\hat{\PP}_n^{X^k})\to_p\gamma^2(\PP,\PP^{X^1}\otimes\cdots\times \PP^{X^k})$. This immediately suggests that all aforementioned tests are consistent against fix alternatives in that their power tends to one as sample sizes increase. Although useful, such consistency results do not tell the full story about the power of these tests, and if there are yet more powerful methods.

For example, as recently shown by \cite{balasubramanian2017optimality}, any goodness-of-fit test based on statistic $\gamma^2_K(\hat{\PP}_n,\PP_0)$ with a \emph{fixed} kernel $K$ is necessarily suboptimal. \cite{balasubramanian2017optimality} also argued that much more powerful tests can be constructed by \emph{regularized embedding}. The appropriate regularization they employed, however, relies on the knowledge of $\PP_0$, and therefore is specialized to goodness-of-fit tests. While it is plausible that MMD based tests for homogeneity or independence may suffer from similar deficiencies, it remains unclear how to construct tests that are more powerful in these settings. The goal of the current work is specifically to address this question. In particular, we show that embedding using Gaussian kernel with an appropriately chosen scaling parameter provides a unified treatment to all three testing problems.

When data are continuous, e.g. $\Xcal=\RR^d$, Gaussian kernels are arguably the most popular and successful choice in practice. On the one hand, we show that this choice of kernel is justified because in all three scenarios, MMD based tests can be optimal for testing against smooth alternatives provided that an appropriate scaling parameter is elicited. On the other hand, we argue that existing ways of selecting the scaling parameter may not exploit the full potential of Gaussian kernel based approaches and yet more powerful tests can be constructed.

In particular, we investigate how the power of these tests increases with the sample size by characterizing the asymptotic behavior of the smallest amount of departure from the null hypothesis that can be consistently detected. More specifically, we adopt the minimax hypothesis testing framework pioneered by \cite{burnashev1979minimax, ingster1987minimax, ingster1993asymptotically}. See also \cite{ermakov1991minimax, spokoiny1996adaptive, lepski1999minimax, ingster2000minimax, ingster2000adaptive, baraud2002non, fromont2006adaptive, fromont2012kernels, fromont2013two}, and references therein. Within this framework, we consider testing against alternatives getting closer and closer to the null hypothesis as the sample size increases. The smallest departure from the null hypotheses that can be detected consistently, in a minimax sense, is referred to as the optimal detection boundary. In all three settings, goodness of fit, homogeneity and independence testing, we show that Gaussian kernels with an appropriately chosen scaling parameter yield tests that are rate optimal in detecting smooth departures from null hypotheses. Our results not only provide rigorous justifications to the practical successes of Gaussian kernels based testing procedures but also offer guidelines on how to choose the scaling parameter in a principled way.

The critical importance of selecting an appropriate scaling parameter is widely recognized in practice. Yet, the way it is done is usually ad hoc and how to do so in a more principled way remains one of the chief practical challenges. See, e.g., \cite{gretton2008kernel, fukumizu2009kernel, gretton2012optimal,  sutherland2016generative}. Our analysis shows that it is essential that we take a diverging scaling parameter as the sample size increases, and the choice of the scaling parameter may determine against which types of deviation from the null hypothesis the resulting test is most powerful.

This also naturally brings about the issue of adaptation and whether or not there is an agnostic approach towards testing of the aforementioned null hypotheses without the need to specify a scaling parameter. To address this challenge, we introduce a simple testing procedure by maximizing a studentized MMD over a pre-specified range of scaling parameters. Similar idea of maximizing MMD over a class of kernels was first introduced by \cite{sriperumbudur2009kernel}. Our analysis, however, suggests that it is more desirable to maximize \emph{normalized} MMD instead. More specifically, we show that the proposed procedure can attain the optimal rate, up to an iterated logarithmic factor, simultaneously over the collection of parameter spaces corresponding to different levels of smoothness.

The rest of this paper is organized as follows. In the next three sections, we shall investigate the statistical properties of Gaussian kernel based tests for goodness-of-fit, homogeneity and independence respectively, and show that with appropriate choice of the scaling parameter, these tests are minimax optimal if the underlying densities are smooth. Since the optimal choice of scaling parameter requires the knowledge of smoothness which is rarely available, in Section \ref{sec:adapt}, we introduce new tests that do not require such knowledge yet attain optimal power, up to an iterated logarithmic factor, for a wide range of smooth alternatives. Numerical experiments presented in Section \ref{sec:sim} further illustrate the practical merits of our method and theoretical developments. We conclude with some summary discussion in Section \ref{sec:disc} and all proofs are relegated to Section \ref{sec:proof}.

\section{Test for Goodness-of-fit}
\label{sec:gof}

Among the three testing problems that we consider, it is instructive to begin with the case of goodness-of-fit. Obviously, the choice of kernel $K$ plays an essential role in kernel embedding of distributions. In particular, when data are continuous, Gaussian kernels are commonly used. More specifically, a Gaussian kernel with a scaling parameter $\nu>0$ is given by
$$
G_{d,\nu}(x,y)=\exp\left(-\nu\|x-y\|_d^2\right),\qquad \forall x,y\in \RR^d.
$$
Hereafter $\|\cdot\|_d$ stands for the usual Euclidean norm in $\RR^d$. For brevity, we shall suppress the subscript $d$ in both $\|\cdot\|$ and $G$ when the dimensionality is clear from the context. When $\PP$ and $\QQ$ are probability distributions defined over $\Xcal=\RR^d$, we shall write the MMD between them with a Gaussian kernel and scaling parameter $\nu$ as $\gamma_\nu(\PP,\QQ)$ where the subscript signifies the specific value of the scaling parameter. 

We shall restrict our attention to distributions with smooth densities. Denote by $\Wcal^{s,2}_d$ the $s$th order Sobolev space in $\RR^d$, that is
$$
\Wcal^{s,2}_d=\left\{f:\RR^d\to \RR\big|f\ \text{is almost surely continuous and} \int (1+\|\omega\|^2)^{s/2} \|\Fcal(f)(\omega)\|^2d\omega<\infty\right\},
$$
where $\Fcal(f)$ is the Fourier transform of $f$:
$$
\Fcal(f)(\omega)=\frac{1}{(2\pi)^{d/2}}\int_{\RR^d} f(x)e^{-i x^\top\omega}dx.
$$
In what follows, we shall again abbreviate the subscript $d$ in $\Wcal^{s,2}_d$ when it is clear from the context. For any $f\in \Wcal^{s,2}$, we shall write
$$
\|f\|_{\Wcal^{s,2}}^2=\int_{\RR^d} (1+\|\omega\|^2)^s \|\Fcal(f)(\omega)\|^2d\omega.
$$
Let $p$ and $p_0$ be the density functions of $\PP$ and $\PP_0$ respectively. We are interested in the case when both $p$ and $p_0$ are elements from $\Wcal^{s,2}$. 

Note that we can rewrite the null hypothesis $H_0^{\rm GOF}$ in terms of density functions: $H_0^{\rm GOF}: p=p_0$ for some prespecified denstiy $p_0\in \Wcal^{s,2}$. To better quantify the power of a test, we shall consider testing against an alternative that is increasingly closer to the null as the sample size $n$ increases:
$$
H_1^{\rm GOF}(\Delta_n;s): p\in \Wcal^{s,2}(M), \quad \|p-p_0\|_{L_2}\ge \Delta_n,
$$
where
$$\Wcal^{s,2}(M)=\left\{f\in \Wcal^{s,2}: \|f\|_{\Wcal^{s,2}}\le M\right\}.$$
and
$$
\|f\|_{L_2}^2=\int_{\RR^d} f^2(x)dx.
$$
The alternative hypothesis $H_1^{\rm GOF}(\Delta_n; s)$ is composite and the power of a test $\Phi$ based on $X_1,\ldots, X_n\sim p$ is therefore defined as
$$
{\rm power}(\Phi; H_1^{\rm GOF}(\Delta_n;s)):=\inf_{p\in \Wcal^{s,2}(M), \|p-p_0\|_{L_2}\ge \Delta_n}\PP\{\Phi {\rm \ rejects\ } H_0^{\rm GOF}\}
$$
Of particular interest here is the smallest $\Delta_n$ so that a test is consistent in that the above quantity converges to one. 

Consider embedding with Gaussian kernel and a fixed scaling parameter $\nu>0$. Following standard asymptotic theory for V-statistics \citep[see, e.g.,][]{serfling2009approximation}, it can be shown that under $H_0^{\rm GOF}$ and certain regularity conditions,
$$
n\gamma^2_\nu(\hat{\PP},\PP_0)\to_d \sum_{k\ge 1}\lambda_k^2 Z_k^2
$$
where $\lambda_1\ge\lambda_2\ge\cdots$ are the singular values of the linear operator:
$$
{\mathcal L}_\nu f=\int_{\RR^d} \bar{G}_\nu(x,x')f(x')dx',\qquad \forall f\in L_2(\RR^d)
$$
and
$$
\bar{G}_\nu(x,y;\PP_0)=G_\nu(x,y)-\EE_{X\sim \PP_0}G_\nu(X,y)-\EE_{X\sim \PP_0}G_\nu(x,X)+\EE_{X,X'\sim_{\rm iid}\PP_0}G_\nu(X,X').
$$
and $Z_k$s are independent standard normal random variables. Hereafter, for brevity, we shall omit the last argument of $\bar{G}$ when it is clear from the context. As such, we may proceed to reject $H_0^{\rm GOF}$ if and only if $n\hat{\gamma}_\nu^2(\hat{\PP}_n, \PP_0)$ exceeds the upper $\alpha$ quantile of its asymptotic distribution, which yields an (asymptotic) $\alpha$-level test. Following the same argument as that from \cite{balasubramanian2017optimality}, we can show that under mild regularity conditions such a test has power tending to one if and only if $\Delta_n\gg n^{-1/4}$. In addition, as shown by \cite{balasubramanian2017optimality}, much more powerful tests exist when assuming that the underlying densities are compactly supported and bounded away from 0 and 1. Here we show that the same is true for broader classes of distributions using Gaussian kernel embedding with a diverging scaling parameter.

Recall that
$$
\gamma^2_\nu(\hat{\PP}_n,\PP_0)={1\over n^2}\sum_{i,j=1}^n \bar{G}_\nu(X_i,X_j).
$$
It is not hard to see that this is a biased estimate of $\gamma^2_\nu(\PP,\PP_0)$ due to the oversized influence of the summands when $i=j$. It is often common to correct for bias and use instead the following $U$-statistic:
$$	
\hat{\gamma_\nu^2}(\PP,\PP_0):={1\over n(n-1)}\sum_{1\le i\neq j\le n}^n \bar{G}_\nu(X_i,X_j),
$$
which we shall focus on in what follows.

The choice of the scaling parameter $\nu$ is essential when using RKHS embedding for goodness-of-fit test. While the importance of data-driven choice of $\nu$ is widely recognized in practice, almost all existing theoretical studies assume that a fixed kernel, therefore a fixed scaling parameter, is used. Here we shall demonstrate the benefit of using a data-driven scaling parameter, and especially choosing a scaling parameter that diverges with the sample size.

More specifically, we argue that, with appropriate scaling, $\hat{\gamma_\nu^2}(\PP,\PP_0)$ can be viewed as an estimate of $\|p-p_0\|_{L_2}^2$ when $\nu\to\infty$ as $n\to\infty$. Note that
$$
\int(p-p_0)^2=\int p^2 -2\int p\cdot p_0+\int p_0^2.
$$
The first term can be estimated by
$$
\int p^2\approx {1\over n}\sum_{i=1}^n p(X_i)\approx{1\over n}\sum_{i=1}^n \hat{p}_{h,-i}(X_i)
$$
where $\hat{p}_{h,-i}$ is a kernel density estimate of $p$ with the $i$th observation removed and bandwidth $h$:
$$
\hat{p}_{h,-i}(x)={1\over n(2\pi h^2)^{d/2}}\sum_{j\neq i} G_{(2h^2)^{-1}}(x-X_j).
$$
Thus, we can estimate $\int p^2$ by
$$
{1\over n(n-1)(2\pi h^2)^{d/2}}\sum_{1\le i\neq j\le n} G_{(2h^2)^{-1}}(X_i,X_j).
$$
Similarly, the cross-product term can be estimated by
$$
\int p\cdot p_0\approx \int \hat{p}_h(x)p_0(x)dx={1\over n(2\pi h^2)^{d/2}}\sum_{i=1}^n \int G_{(2h^2)^{-1}}(x,X_i)p_0(x)dx.
$$
Together, we can view
$$
{1\over n(n-1)(2\pi h^2)^{d/2}}\sum_{1\le i\neq j\le n} \bar{G}_{(2h^2)^{-1}}(X_i,X_j)
$$
as an estimate of $\int (p-p_0)^2$. Following standard asymptotic properties of the kernel density estimator \citep[see, e.g.,][]{tsybakov2008introduction}, we know that
$$
(\pi/\nu)^{-d/2} \hat{\gamma_\nu^2}(\PP,\PP_0)\to_p \|p-p_0\|_{L_2}^2
$$
if $\nu\to \infty$ in such a fashion that $\nu=o(n^{4/d})$. Motivated by this observation, we shall now consider testing $H_0^{\rm GOF}$ using $\hat{\gamma_\nu^2}(\PP,\PP_0)$ with a diverging $\nu$. To signify the dependence of $\nu$ on the sample size, we shall add a subscript $n$ in what follows.

Under $H_0^{\rm GOF}$, it is clear $\EE\hat{\gamma_{\nu_n}^2}(\PP,\PP_0)=0$. Note also that
\begin{align}
&{\rm var}(\hat{\gamma_{\nu_n}^2}(\PP,\PP_0))\notag\\=&{2\over n(n-1)}\EE\left[\bar{G}_{\nu_n}(X_1,X_2)\right]^2 \nonumber\\
=&{2\over n(n-1)}\left[\EE\left[G_{\nu_n}(X_1,X_2)\right]^2-2\EE[G_{\nu_n}(X_1,X_2)G_{\nu_n}(X_1,X_3)]+\left(\EE\left[G_{\nu_n}(X_1,X_2)\right]\right)^2\right] \nonumber\\
=&{2\over n(n-1)}\left[\EE G_{2\nu_n}(X_1,X_2)-2\EE[G_{\nu_n}(X_1,X_2)G_{\nu_n}(X_1,X_3)]+\left(\EE\left[G_{\nu_n}(X_1,X_2)\right]\right)^2\right]. \label{eq:var}
\end{align}
Simple calculations yield:
$$
{\rm var}(\hat{\gamma_{\nu_n}^2}(\PP,\PP_0))={2(\pi/(2\nu_n))^{d/2}\over n^2}\cdot \|p_0\|_{L_2}^2\cdot(1+o(1)),
$$
assuming that $\nu_n\to\infty$. We shall show that
$$
\frac{n}{\sqrt{2}}\left(2\nu_n\over \pi\right)^{d/4}\hat{\gamma_{\nu_n}^2}(\PP,\PP_0)\to_d N\left(0,\|p_0\|_{L_2}^2\right).
$$
To use this as a test statistic, however, we will need to estimate ${\rm var}(\hat{\gamma_{\nu_n}^2}(\PP,\PP_0))$. To this end, it is natural to consider estimating each of the three terms on the rightmost hand side of \eqref{eq:var} by $U$-statistics:
\begin{align*}
\tilde{s}^2_{n,\nu_n}=&\frac{1}{n(n-1)}\sum\limits_{1\leq i\neq j\leq n}G_{2\nu_n}(X_i,X_j)\\&-\frac{2(n-3)!}{n!}\sum\limits_{\substack{1\le i,j_1,j_2\le n\\ |\{i,j_1,j_2\}|=3}}G_{\nu_n}(X_i,X_{j_1})G_{\nu_n}(X_i,X_{j_2})\\&+\frac{(n-4)!}{n!}\sum\limits_{\substack{1\le i_1,i_2,j_1,j_2\le n\\ |\{i_1,i_2,j_1,j_2\}|=4}}G_{\nu_n}(X_{i_1},X_{j_1})G_{\nu_n}(X_{i_2},X_{j_2}).
\end{align*}
Note that $\tilde{s}^2_{n,\nu_n}$ is not always positive. To avoid a negative estimate of the variance, we can replace it with a sufficiently small value, say $1/n^2$, whenever it is negative or too small. Namely, let 
$$
\hat{s}^2_{n,\nu_n}=\max\left\{\tilde{s}^2_{n,\nu_n},1/n^2\right\},$$
and consider a test statistic:
$$
T_{n,\nu_n}^{\rm GOF}:={n\over\sqrt{2}}\hat{s}_{n,\nu_n}^{-1}\hat{\gamma_{\nu_n}^2}(\PP,\PP_0).
$$
We have
\begin{theorem}
\label{th:gofnull}
Let $\nu_n\to \infty$ as $n\to\infty$ in such a fashion that $\nu_n=o(n^{4/d})$ . Then, under $H_0^{\rm GOF}$,
\begin{equation}
\label{eq:gofnull1}
\frac{n}{\sqrt{2}}\left(2\nu_n\over \pi\right)^{d/4}\hat{\gamma_{\nu_n}^2}(\PP,\PP_0)\to_d N(0,\|p_0\|_{L_2}^2).
\end{equation}
Moreover,
\begin{equation}
\label{eq:gofnull2}
T_{n,\nu_n}^{\rm GOF}\to_d N(0,1).
\end{equation}
\end{theorem}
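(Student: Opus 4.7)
The plan is first to establish the central limit theorem \eqref{eq:gofnull1} by realizing $\hat{\gamma_{\nu_n}^2}(\PP,\PP_0)$ as a completely degenerate $U$-statistic of order two with a kernel that varies with $n$, and then to deduce \eqref{eq:gofnull2} from \eqref{eq:gofnull1} via Slutsky's lemma after proving consistency of the studentization.

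For \eqref{eq:gofnull1}, write $\hat{\gamma_{\nu_n}^2}(\PP,\PP_0)=\binom{n}{2}^{-1}\sum_{1\le i<j\le n}h_n(X_i,X_j)$ with $h_n:=\bar{G}_{\nu_n}(\cdot,\cdot;\PP_0)$. Under $H_0^{\rm GOF}$, the definition of $\bar{G}_{\nu_n}$ directly gives $\EE[h_n(X_1,X_2)\mid X_1]=0$ almost surely, so $h_n$ is completely degenerate. A standard central limit theorem for degenerate $U$-statistics with a changing kernel then applies provided
\[
\frac{\EE G_n^2(X_1,X_2)}{n\,[\EE h_n^2(X_1,X_2)]^2}\to 0 \qquad\text{and}\qquad \frac{\EE h_n^4(X_1,X_2)}{n^2\,[\EE h_n^2(X_1,X_2)]^2}\to 0,
\]
where $G_n(x,y):=\EE[h_n(X,x)h_n(X,y)]$. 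The variance calculation already displayed in the text yields $\EE h_n^2\sim (\pi/(2\nu_n))^{d/2}\|p_0\|_{L_2}^2$; an analogous Gaussian integration in which two copies of $G_{\nu_n}$ share a common argument (via $(z-x)^2+(z-y)^2=2(z-(x+y)/2)^2+(x-y)^2/2$) yields $G_n(x,y)\sim(\pi/(2\nu_n))^{d/2}p_0((x+y)/2)\,G_{\nu_n/2}(x,y)$ and hence $\EE G_n^2(X_1,X_2)=O(\nu_n^{-3d/2})$, so the first ratio is $O(\nu_n^{-d/2}/n)\to 0$. Since $|\bar{G}_{\nu_n}|$ is uniformly bounded by a constant, $\EE h_n^4\le C\,\EE h_n^2=O(\nu_n^{-d/2})$, so the second ratio is $O(\nu_n^{d/2}/n^2)=o(1)$ under $\nu_n=o(n^{4/d})$. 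Combined with the scaling $\sqrt{\binom{n}{2}\,\EE h_n^2}\sim \tfrac{n}{\sqrt{2}}(\pi/(2\nu_n))^{d/4}\|p_0\|_{L_2}$, this proves \eqref{eq:gofnull1}.

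For \eqref{eq:gofnull2}, by \eqref{eq:gofnull1} and Slutsky's lemma it suffices to show $(2\nu_n/\pi)^{d/2}\hat{s}^2_{n,\nu_n}\to_p\|p_0\|_{L_2}^2$. The three $U$-statistics comprising $\tilde{s}^2_{n,\nu_n}$ are handled separately. The leading piece $\binom{n}{2}^{-1}\sum_{i\neq j}G_{2\nu_n}(X_i,X_j)$ is a non-degenerate $U$-statistic with mean $(\pi/(2\nu_n))^{d/2}\|p_0\|_{L_2}^2(1+o(1))$; its first Hoeffding projection $\int G_{2\nu_n}(x,y)p_0(y)dy\sim(\pi/(2\nu_n))^{d/2}p_0(x)$ produces variance of order $\nu_n^{-d}/n$, so after multiplication by $(2\nu_n/\pi)^{d/2}$ the fluctuation is $O_p(n^{-1/2})$. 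The third- and fourth-order $U$-statistics both have mean of order $\nu_n^{-d}$ (via the approximate identity $\int G_{\nu_n}(x,y)p_0(y)dy\sim(\pi/\nu_n)^{d/2}p_0(x)$), so after multiplication by $(2\nu_n/\pi)^{d/2}$ each contributes $O(\nu_n^{-d/2})=o(1)$, and analogous Hoeffding projection arguments control their variance at the same rate. Thus $(2\nu_n/\pi)^{d/2}\tilde{s}^2_{n,\nu_n}\to_p\|p_0\|_{L_2}^2>0$, the truncation at $1/n^2$ in the definition of $\hat{s}^2_{n,\nu_n}$ is eventually inactive, and \eqref{eq:gofnull2} follows. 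The main obstacle throughout is the bookkeeping in the passage from $G_{\nu_n}$ to $\bar{G}_{\nu_n}$: the three centering terms in $\bar{G}_{\nu_n}$ are not individually negligible, yet their contributions to $\EE h_n^2$, $\EE G_n^2$, and each component of $\tilde{s}^2_{n,\nu_n}$ must be tracked precisely, since the final rates depend on keeping those corrections truly of lower order. This will require iterated Gaussian convolutions and controlled Taylor expansions of $p_0$ around the convolutional centers.
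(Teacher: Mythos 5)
Your overall strategy mirrors the paper's: treat $\hat{\gamma_{\nu_n}^2}(\PP,\PP_0)$ as a completely degenerate order-two $U$-statistic, apply a Hall-type CLT with changing kernel to get \eqref{eq:gofnull1}, and obtain \eqref{eq:gofnull2} via Slutsky after showing $(2\nu_n/\pi)^{d/2}\hat{s}^2_{n,\nu_n}\to_p\|p_0\|_{L_2}^2$. The identification of the asymptotic variance and the fourth-moment bound $\EE h_n^4\lesssim\EE h_n^2$ match the paper.

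The genuine gap is in your verification of the $\EE G_n^2$ condition. You write $G_n(x,y)\sim(\pi/(2\nu_n))^{d/2}p_0((x+y)/2)\,G_{\nu_n/2}(x,y)$ and conclude $\EE G_n^2=O(\nu_n^{-3d/2})$. That asymptotic for $\int G_{2\nu_n}(z,(x+y)/2)p_0(z)\,dz$ holds pointwise at Lebesgue points, but to integrate it against $p_0(x)p_0(y)$ and obtain the stated rate you need an envelope such as $p_0\in L_\infty$ (which then also makes $\int p_0^3<\infty$). The theorem assumes only what $H_0^{\rm GOF}$ supplies: $p_0\in L_2$ and almost-sure continuity. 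Without boundedness, the best a priori estimate — $\sup_c\int G_{2\nu_n}(z,c)p_0(z)\,dz\lesssim_d\|p_0\|_{L_2}\nu_n^{-d/4}$ from the Fourier-level Cauchy--Schwarz bound the paper uses elsewhere — gives $G_n(x,y)\lesssim_d\|p_0\|_{L_2}\nu_n^{-d/4}G_{\nu_n/2}(x,y)$ and hence only $\EE G_n^2\lesssim_d\|p_0\|_{L_2}^4\nu_n^{-d}$, so $\EE G_n^2/[\EE h_n^2]^2=O(1)$ rather than $o(1)$. The paper's proof handles this by a genuinely different argument: using the identity
\[
\|a-b\|^2+\|a-c\|^2+\|d-b\|^2+\|d-c\|^2=\|a-d\|^2+\|b-c\|^2+\|(a+d)-(b+c)\|^2,
\]
it rewrites $\nu_n^d\EE G_n^2$ (up to centering) as $\nu_n^d\,\EE\bigl[G_{\nu_n}(X_1,X_4)G_{\nu_n}(X_2,X_3)\,\EE(G_{\nu_n}(X_1+X_4,X_2+X_3)\mid X_1-X_4,X_2-X_3)\bigr]$, splits according to whether $\|X_1-X_4\|$ or $\|X_2-X_3\|$ exceeds a threshold $\delta$, and then proves that the inner conditional expectation vanishes \emph{uniformly} on $\{\|X_1-X_4\|\le\delta,\,\|X_2-X_3\|\le\delta\}$ using an equicontinuity argument on the Fourier transforms of conditional densities — requiring only $p_0\in L_2$ and a.s.\ continuity. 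To repair your proof you would either have to impose $p_0\in L_\infty$ (strengthening the hypotheses) or replace the approximate-identity rate calculation with a soft uniform-convergence argument of this type. A minor secondary point: the two Lindeberg-type conditions you write down are not Hall's conditions verbatim (your first has a spurious factor of $n$ in the denominator), and you omit the mixed-moment condition $\EE[\bar{G}_{\nu_n}^2(X_1,X_2)\bar{G}_{\nu_n}^2(X_1,X_3)]/(n[\EE\bar{G}_{\nu_n}^2]^2)\to0$ that the paper verifies; if you cite Hall (1984) you should state and check its conditions exactly. The second half of your proposal (consistency of $\hat{s}^2_{n,\nu_n}$) parallels the paper's moment computation and reaches the right conclusion, but the intermediate rate $\nu_n^{-d}$ that you assign to the 3-point $U$-statistic again implicitly assumes $p_0\in L_3$; the Fourier bound gives the weaker (but still sufficient) rate $\nu_n^{-3d/4}$ without boundedness.
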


Theorem \ref{th:gofnull} immediately implies a test, denoted by $\Phi^{\rm GOF}_{n,\nu_n,\alpha}$ $(\alpha\in(0,1))$, that rejects $H_0^{\rm GOF}$ if and only if $T_{n,\nu_n}^{\rm GOF}$ exceeds $z_\alpha$, the upper $1-\alpha$ quantile of the standard normal distribution, is an asymptotic $\alpha$-level test.

We now proceed to study its power against a smooth alternative. Following the same argument as before, it can be shown that
$$
{1\over n(n-1)(\pi/\nu_n)^{d/2}}\sum_{1\le i\neq j\le n} \bar{G}_{\nu_n}(X_i,X_j)\to_p \|p-p_0\|_{L_2}^2,
$$
and
$$
(2\nu_n/\pi)^{d/2}\hat{s}^2_{n,\nu_n}\to_p \|p\|_{L_2}^2,
$$
so that
$$
n^{-1}(\nu_n/(2\pi))^{d/4}T_n^{\rm GOF}\to_p \|p-p_0\|_{L_2}^2/\|p\|_{L_2}.
$$
This immediately implies that, if $\nu_n\to\infty$ in such a manner that $\nu_n=o(n^{4/d})$, then $\Phi_{n,\nu_n,\alpha}^{\rm GOF}$ is consistent for a fixed $p\neq p_0$ in that its power converges to one. In fact, as $n$ increases, more and more subtle deviation from $p_0$ can be detected by $\Phi_{n,\nu_n,\alpha}^{\rm GOF}$. A refined analysis of the asymptotic behavior of $T_{n,\nu_n}^{\rm GOF}$ yields that
\begin{theorem}
\label{th:gofpower}
Assume that $n^{2s/(d+4s)}\Delta_n\to \infty$. Then for any $\alpha\in (0,1)$,
$$
\lim_{n\to\infty}{\rm power}\{\Phi^{\rm GOF}_{n,\nu_n,\alpha}; H_1^{\rm GOF}(\Delta_n; s)\}\to 1,
$$
provided that $\nu_n\asymp n^{{4}/(d+4s)}$.
\end{theorem}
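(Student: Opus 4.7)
The plan is to prove that $T_{n,\nu_n}^{\rm GOF}\to\infty$ in probability, uniformly over the composite alternative, so that $\Phi_{n,\nu_n,\alpha}^{\rm GOF}$ has power tending to one. I do this by controlling the numerator $\hat{\gamma_{\nu_n}^2}(\PP,\PP_0)$ from below and the denominator $\hat s_{n,\nu_n}$ from above, both to the correct order, and then combining them.

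\textbf{Signal lower bound and concentration of $\hat\gamma_{\nu_n}^2$.} Using the Fourier representation of the Gaussian kernel,
$$
\gamma_{\nu_n}^2(\PP,\PP_0)=(\pi/\nu_n)^{d/2}\int_{\RR^d}|\Fcal(p-p_0)(\omega)|^2\exp\bigl(-\|\omega\|^2/(4\nu_n)\bigr)\,d\omega.
$$
Splitting this integral at $\|\omega\|^2\asymp\nu_n$ and using $\|p-p_0\|_{\Wcal^{s,2}}\le 2M$ to bound the high-frequency tail by a constant multiple of $M^2\nu_n^{-s}$, I obtain $\gamma_{\nu_n}^2(\PP,\PP_0)\gtrsim \nu_n^{-d/2}(\Delta_n^2-CM^2\nu_n^{-s})$. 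With $\nu_n\asymp n^{4/(d+4s)}$ the bias term is of order $n^{-4s/(d+4s)}$, and the hypothesis $n^{2s/(d+4s)}\Delta_n\to\infty$ is exactly what makes it $o(\Delta_n^2)$, yielding $\gamma_{\nu_n}^2(\PP,\PP_0)\gtrsim \nu_n^{-d/2}\Delta_n^2$. To lift this to a bound on $\hat\gamma_{\nu_n}^2$, I use the Hoeffding decomposition of the $U$-statistic: the degenerate second-order variance is $O(\nu_n^{-d/2}/n^2)$ as in \eqref{eq:var}, while the linear projection is $h_1(x)=(G_{\nu_n}*(p-p_0))(x)+\mathrm{const}$, whose second moment I bound by $\EE_\PP h_1^2\le \|h_1\|_\infty\|h_1\|_{L_2}\|p\|_{L_2}\lesssim \nu_n^{-3d/4}\|p-p_0\|_{L_2}^2$ using Cauchy-Schwarz and Plancherel. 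A direct computation shows that, at the stated rate, both variance contributions are $o((\gamma_{\nu_n}^2)^2)$, so Chebyshev gives $\hat{\gamma_{\nu_n}^2}/\gamma_{\nu_n}^2(\PP,\PP_0)\to_p 1$.

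\textbf{Denominator and conclusion.} Each of the three $U$-statistics defining $\tilde s_{n,\nu_n}^2$ has a bounded kernel and concentrates around its expectation; aggregating those expectations exactly as in the derivation of \eqref{eq:var} yields $\tilde s_{n,\nu_n}^2=(\pi/(2\nu_n))^{d/2}\|p\|_{L_2}^2(1+o_p(1))$ uniformly over the alternative. Since $\|p\|_{L_2}\le M$, the truncation in the definition of $\hat s^2$ is inactive with probability tending to one, and hence $\hat s_{n,\nu_n}\lesssim \nu_n^{-d/4}$. Combining,
$$
T_{n,\nu_n}^{\rm GOF}=\frac{n}{\sqrt 2}\,\frac{\hat{\gamma_{\nu_n}^2}(\PP,\PP_0)}{\hat s_{n,\nu_n}}\gtrsim \frac{n\cdot\nu_n^{-d/2}\Delta_n^2}{\nu_n^{-d/4}}=n\,\nu_n^{-d/4}\,\Delta_n^2\asymp n^{4s/(d+4s)}\Delta_n^2,
$$
which diverges to infinity under the hypothesis, so that $\PP\{T_{n,\nu_n}^{\rm GOF}>z_\alpha\}\to 1$ uniformly on $H_1^{\rm GOF}(\Delta_n;s)$. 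The main technical obstacle is ensuring that the concentration arguments above are \emph{uniform} over $p\in\Wcal^{s,2}(M)$ without any pointwise $L^\infty$ bound on $p$ (Sobolev embedding does not supply one when $s\le d/2$); the Plancherel+Cauchy-Schwarz bound on the Hoeffding projection is precisely what side-steps this, since it depends only on $L^2$ and Sobolev norms of $p$ and $p-p_0$.
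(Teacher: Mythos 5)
Your proof is correct and mirrors the paper's argument step for step: the Hoeffding decomposition of $\hat{\gamma_{\nu_n}^2}$ into the population MMD plus a degenerate $U$-statistic (variance $O(M^2\nu_n^{-d/2}/n^2)$) and a linear projection term (variance $O(M\nu_n^{-3d/4}\|p-p_0\|_{L_2}^2/n)$), the Fourier-tail lower bound $\gamma_{\nu_n}^2(\PP,\PP_0)\gtrsim\nu_n^{-d/2}\|p-p_0\|_{L_2}^2$ valid once $\nu_n^s\gg\Delta_n^{-2}$, and the final rate computation $n\nu_n^{-d/4}\Delta_n^2=(n^{2s/(d+4s)}\Delta_n)^2\to\infty$ are exactly what the paper does. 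One small caveat on the denominator: the claim that $\tilde s_{n,\nu_n}^2=(\pi/(2\nu_n))^{d/2}\|p\|_{L_2}^2(1+o_p(1))$ \emph{uniformly}, with the truncation inactive, implicitly needs $\|p\|_{L_2}$ bounded away from zero over the alternative class, which $\Wcal^{s,2}(M)$ does not guarantee; the paper avoids this by only upper-bounding $\EE\hat s_{n,\nu_n}^2\lesssim M^2\nu_n^{-d/2}+1/n^2$ and invoking Markov, which is all you actually use for the one-sided bound $\hat s_{n,\nu_n}\lesssim\nu_n^{-d/4}$.
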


In other words, $\Phi_{n,\nu_n,\alpha}^{\rm GOF}$ has a detection boundary of the order $O(n^{-2s/(d+4s)})$ which turns out to be minimax optimal in that no other tests could attain a detection boundary with faster rate of convergence. More precisely, we have

\begin{theorem}
\label{th:goflower}
Assume that $p_0$ is density such that $\|p_0\|_{\Wcal^{s,2}}<M$, and $\lim\inf_{n\to\infty}n^{2s/(d+4s)}\Delta_n<\infty$. Then there exists some $\alpha\in(0,1)$ such that for any test $\Phi_n$ of level $\alpha$ (asymptotically) based on $X_1,\ldots,X_n\sim p$, 
$$
\liminf_{n\to\infty}{\rm power}\{\Phi_n; H_1^{\rm GOF}(\Delta_n; s)\}<1.
$$
\end{theorem}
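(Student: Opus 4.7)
The plan is to apply the classical Ingster-Le Cam two-hypothesis method: construct a prior $\pi$ supported on $H_1^{\rm GOF}(\Delta_n; s)$ whose mixture $\mu_n := \int p^{\otimes n}\, d\pi(p)$ has bounded chi-squared divergence from the null product $p_0^{\otimes n}$. Once this is in hand, the standard inequality
$$\inf_{p\in H_1^{\rm GOF}(\Delta_n;s)} \PP_p\{\Phi_n\text{ rejects}\} \le \EE_{\mu_n}\Phi_n \le \EE_{p_0^{\otimes n}}\Phi_n + \tfrac12\sqrt{\chi^2(\mu_n\|p_0^{\otimes n})}$$
shows that any asymptotically level-$\alpha$ test satisfies $\liminf_n \inf_p \PP_p\{\Phi_n\text{ rejects}\} \le \alpha + \tfrac12\sqrt{C}$ along the subsequence where $\chi^2 \le C$; picking $\alpha$ small enough that $\alpha + \tfrac12\sqrt{C} < 1$ delivers the claim.

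I would build the prior as a Rademacher perturbation of $p_0$ by bumps on a grid. Pick a small cube $B$ inside $\{p_0 \ge c_0\}$ for some $c_0 > 0$ (available because $p_0 \in \Wcal^{s,2}$ has a continuous version and integrates to one), fix a smooth $\phi$ compactly supported in $[0,1]^d$ with $\int \phi = 0$ and $\|\phi\|_{L_2} = 1$, and tile $B$ by $N_n \asymp h_n^{-d}$ disjoint translates giving bumps $\phi_i(x) := h_n^{-d/2}\phi((x-x_i)/h_n)$. Set
$$p_\epsilon := p_0 + b_n \sum_{i=1}^{N_n} \epsilon_i\, \phi_i,\qquad h_n \asymp n^{-2/(d+4s)},\quad b_n \asymp h_n^{(d+2s)/2},$$
with $\epsilon = (\epsilon_1,\ldots,\epsilon_{N_n})$ i.i.d. Rademacher. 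The mean-zero property gives $\int p_\epsilon = 1$; the amplitude bound $b_n \|\phi_i\|_\infty \asymp h_n^s \to 0 < c_0$ secures $p_\epsilon \ge 0$; and $\|p_\epsilon - p_0\|_{L_2}^2 = b_n^2 N_n \asymp h_n^{2s} \asymp n^{-4s/(d+4s)}$, which is of order $\Delta_n^2$ along the subsequence on which $n^{2s/(d+4s)}\Delta_n$ is bounded, so $p_\epsilon \in H_1^{\rm GOF}(\Delta_n;s)$ once constants are tuned.

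The heart of the argument is the chi-squared tensorization,
$$1 + \chi^2(\mu_n\|p_0^{\otimes n}) = \EE_{\epsilon,\epsilon'}\left[\int \frac{p_\epsilon\, p_{\epsilon'}}{p_0}\, dx\right]^n,$$
for independent copies $\epsilon, \epsilon'$. Since $\int \phi_i = 0$ and the $\phi_i$ have disjoint supports, the inner integral collapses to $1 + b_n^2 \sum_i \epsilon_i\epsilon'_i\, a_i$ with $a_i := \int \phi_i^2/p_0 \asymp 1$. As $b_n^2 \sum_i a_i \asymp h_n^{2s} \to 0$, the bound $(1+x)^n \le e^{nx}$ combined with independence and the Rademacher moment generating function $\cosh(t) \le e^{t^2/2}$ gives
$$1 + \chi^2 \le \prod_i \cosh(n b_n^2 a_i) \le \exp\!\Bigl(\tfrac12 n^2 b_n^4 \sum_i a_i^2\Bigr).$$
With $b_n^4 \asymp h_n^{2d+4s}$ and $\sum_i a_i^2 \asymp N_n \asymp h_n^{-d}$, the exponent is $\asymp n^2 h_n^{d+4s} = O(1)$ under the choice $h_n \asymp n^{-2/(d+4s)}$, closing the chi-squared estimate.

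The main technical hurdle is verifying the Sobolev constraint $\|p_\epsilon\|_{\Wcal^{s,2}} \le M$, or at least this bound with probability bounded away from zero under $\pi$ (after which we restrict $\pi$ to the good event without affecting the lower bound). Unlike the $L_2$ norm, the weighted Fourier norm $\|\cdot\|_{\Wcal^{s,2}}$ does not decompose additively across disjoint-support bumps for non-integer $s$. A direct Fourier calculation shows $\EE_\epsilon \|b_n \sum_i \epsilon_i \phi_i\|_{\Wcal^{s,2}}^2 \asymp b_n^2 N_n h_n^{-2s} \asymp 1$, which can be kept below $(M - \|p_0\|_{\Wcal^{s,2}})^2$ by tuning the hidden constant in $b_n$; concentration of the Rademacher quadratic form in the frequency domain (Hanson-Wright, or a Khintchine inequality) then upgrades this to a high-probability bound. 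An alternative is to replace the spatial bumps by wavelet pieces living in disjoint frequency annuli, for which the $\Wcal^{s,2}$ norm is genuinely additive. Either route dispatches the smoothness check and leaves the rest of the argument routine.
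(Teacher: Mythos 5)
Your proposal follows the same route as the paper: Ingster's chi-squared / second-moment-of-likelihood-ratio method applied to a Rademacher-signed perturbation of $p_0$ by disjointly supported, $L_2$-normalized bumps on a grid, with the same resolution $h_n\asymp n^{-2/(d+4s)}$ and amplitude $\asymp h_n^{(d+2s)/2}$ (the paper's $b_n$, $r_n$ are your $h_n^{-1}$, $b_n$), and the same tensorization bound $\EE_{\epsilon,\epsilon'}\bigl(\int p_\epsilon p_{\epsilon'}/p_0\bigr)^n\le\exp(Cn^2 h_n^{d+4s})$. Two places to tighten. First, your closing inequality ${\rm power}\le\alpha+\tfrac12\sqrt{C}$ only yields $\liminf{\rm power}<1$ if $C<4$, and $C$ scales with the fourth power of the (possibly large) constant in $\liminf n^{2s/(d+4s)}\Delta_n<\infty$; replace it with the Cauchy--Schwarz bound ${\rm power}\le\EE_{\mu_n}\Phi_n\le\sqrt{(1+C)\,\EE_{p_0^{\otimes n}}\Phi_n}$, which gives $\liminf{\rm power}\le\sqrt{(1+C)\alpha}<1$ for $\alpha<1/(1+C)$, regardless of $C$. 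Second, you are right to flag the $\Wcal^{s,2}$ cross-terms: the paper asserts $\langle\phi_{n,k},\phi_{n,k'}\rangle_{\Wcal^{s,2}}=0$ from disjoint supports, which is valid only for integer $s$; for non-integer $s$ the inner product is merely exponentially small (rapid Fourier decay of $\phi$), and either your Hanson--Wright concentration route in frequency or the wavelet (frequency-localized) replacement closes this, so your treatment of the smoothness check is in fact more careful than the paper's own.
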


Together, Theorems \ref{th:gofpower} and \ref{th:goflower} suggest that Gaussian kernel embedding of distributions is especially suitable for testing against smooth alternatives, and it yields a test that could consistently detect the smallest departures, in terms of rate of convergence, from the null distribution. The idea can also be readily applied to testing of homogeneity and independence which we shall examine next.

\section{Test for Homogeneity}
\label{sec:hom}

As in the case of goodness of fit test, we shall consider the case when the underlying distributions have smooth densities so that we can rewrite the null hypothesis as $H_0^{\rm HOM}: p=q\in \Wcal^{s,2}(M)$, and the alternative hypothesis as
$$
H_1^{\rm HOM}(\Delta_n; s): p, q\in \Wcal^{s,2}(M),\quad \|p-q\|_{L_2}\ge \Delta_n.
$$
The power of a test $\Phi$ based on $X_1,\ldots, X_n\sim p$ and $Y_1,\ldots,Y_m\sim q$ is given by
$$
{\rm power}(\Phi; H_1^{\rm HOM}(\Delta_n; s)):=\inf_{p,q\in \Wcal^{s,2}(M), \|p-q\|_{L_2}\ge \Delta_n}\PP\{\Phi {\rm \ rejects\ } H_0^{\rm HOM}\}
$$
To fix ideas, we shall also assume that $c\le m/n\le C$ for some constants $0<c\le C<\infty$. In addition, we shall express explicitly only the dependence on $n$ and not $m$, for brevity. Our treatment, however, can be straightforwardly extended to more general situations.

Recall that
\begin{eqnarray*}
\gamma_{\nu_n}^2(\hat{\PP}_n,\hat{\QQ}_m)={1\over n^2}\sum_{1\le i,j\le n}G_{\nu_n}(X_i,X_j)+{1\over m^2}\sum_{1\le i,j\le m}G_{\nu_n}(Y_i,Y_j)\\
-{2\over mn}\sum_{i=1}^n\sum_{j=1}^mG_{\nu_n}(X_i,Y_j).
\end{eqnarray*}
As before, to reduce bias, we shall focus instead on a closely related estimate of $\gamma_{\nu_n}(\PP,\QQ)$:
\begin{eqnarray*}
\hat{\gamma_{\nu_n}^2}(\PP,\QQ)={1\over n(n-1)}\sum_{1\le i\neq j\le n}G_{\nu_n}(X_i,X_j)+{1\over m(m-1)}\sum_{1\le i\neq j\le m}G_{\nu_n}(Y_i,Y_j)\\
-{2\over mn}\sum_{i=1}^n\sum_{j=1}^mG_{\nu_n}(X_i,Y_j).
\end{eqnarray*}
It is easy to see that under $H_0^{\rm HOM}$,
$$
\EE\hat{\gamma_{\nu_n}^2}(\PP,\QQ)=0,
$$
and
$$
\var\left(\hat{\gamma_{\nu_n}^2}(\PP,\QQ)\right)=2\left(\frac{1}{n(n-1)}+\frac{2}{mn}+\frac{1}{m(m-1)}\right)\EE_{(X,Y)\sim \PP\otimes\QQ} \bar{G}_{\nu_n}^2(X,Y),
$$
where
$$
\bar{G}_{\nu_n}(x,y)=G_\nu(x,y)-\EE_{X\sim \PP}G_{\nu_n}(X,y)-\EE_{Y\sim \QQ}G_{\nu_n}(x,Y)+\EE_{(X,Y)\sim \PP\otimes\QQ}G_{\nu_n}(X,Y).
$$
It is therefore natural to consider estimating the variance by $\hat{s}_{n,m,\nu_n}^2=\max\left\{\tilde{s}_{n,m,\nu_n}^2,1/n^2\right\}$ where
\begin{align*}
\tilde{s}_{n,m,\nu_n}^2=&\frac{1}{N(N-1)}\sum\limits_{1\leq i\neq j\leq N}G_{2\nu_n}(Z_i,Z_j)\\&-\frac{2(N-3)!}{N!}\sum\limits_{\substack{1\le i,j_1,j_2\le N\\ |\{i,j_1,j_2\}|=3}}G_{\nu_n}(Z_i,Z_{j_1})G_{\nu_n}(Z_i,Z_{j_2})\\&+\frac{(N-4)!}{N!}\sum\limits_{\substack{1\le i_1,i_2,j_1,j_2\le N\\ |\{i_1,i_2,j_1,j_2\}|=4}}G_{\nu_n}(Z_{i_1},Z_{j_1})G_{\nu_n}(Z_{i_2},Z_{j_2}),
\end{align*}
$N=n+m$ and $Z_i=X_i$ if $i\le n$ and $Y_{i-n}$ if $i>n$. This leads to the following test statistic
\begin{eqnarray*}
T_{n,\nu_n}^{\rm HOM}={nm\over \sqrt{2}(n+m)}\cdot \hat{s}_{n,m,\nu_n}^{-1}\cdot \hat{\gamma_{\nu_n}^2}(\PP,\QQ).
\end{eqnarray*}
As before, we can show
\begin{theorem}
\label{th:homnull}
Let $\nu_n\to \infty$ as $n\to \infty$ in such a fashion that $\nu_n=o(n^{4/d})$. Then under $H_0^{\rm HOM}: p=q\in \Wcal^{s,2}(M)$, 
$$
T_{n,\nu_n}^{\rm HOM}\to_d N(0,1),\qquad {\rm as\ }n\to\infty.
$$
\end{theorem}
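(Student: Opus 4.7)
The plan is to mirror the strategy used for Theorem \ref{th:gofnull}. Under $H_0^{\rm HOM}$ the pooled sample $Z_1,\ldots,Z_N$ with $N=n+m$ is i.i.d.\ from the common density $p=q$, so I would first rewrite $\hat\gamma_{\nu_n}^2(\PP,\QQ)$ as a two-sample degenerate $U$-statistic in the pooled kernel $\bar G_{\nu_n}$. Writing
$$\hat\gamma_{\nu_n}^2(\PP,\QQ)=\tfrac{1}{n(n-1)}\!\!\sum_{i\ne j}\!\bar G_{\nu_n}(X_i,X_j)+\tfrac{1}{m(m-1)}\!\!\sum_{i\ne j}\!\bar G_{\nu_n}(Y_i,Y_j)-\tfrac{2}{mn}\!\sum_{i,j}\!\bar G_{\nu_n}(X_i,Y_j),$$
which is valid under $H_0$ because the centering constants cancel since $\EE_{\PP} G_{\nu_n}(\cdot,y)=\EE_{\QQ} G_{\nu_n}(\cdot,y)$, the statistic becomes a weighted sum of pairwise terms $\bar G_{\nu_n}(Z_i,Z_j)$ satisfying $\EE[\bar G_{\nu_n}(Z_i,Z_j)\mid Z_i]=0$ a.s.

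The main step is a central limit theorem for this degenerate $U$-statistic with a kernel that varies with $n$. I would use the Gaussian convolution identities $G_{\nu_n}\ast G_{\nu_n}\propto G_{\nu_n/2}$ to compute the leading order of the second and fourth moments of $\bar G_{\nu_n}(Z,Z')$, obtaining $\EE\bar G_{\nu_n}^2(Z,Z')=(\pi/(2\nu_n))^{d/2}\|p\|_{L_2}^2(1+o(1))$ and $\EE\bar G_{\nu_n}^4(Z,Z')=O((\nu_n)^{-d/2}\cdot(\nu_n)^{-d/2})=O(\nu_n^{-d})$ up to smoothness-dependent constants. The variance of the statistic is therefore of order $(n+m)^{-2}\nu_n^{-d/2}$, so the natural normalization is exactly $nm/[\sqrt2(n+m)]\cdot(2\nu_n/\pi)^{d/4}$. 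Applying de Jong's CLT for degenerate $U$-statistics (or equivalently a martingale CLT on the Hoeffding decomposition), the two conditions to verify are that the maximal row variance is negligible and that the fourth moment is dominated by the square of the variance; both reduce to $\nu_n^{-d/2}=o(n\cdot\nu_n^{-d/2}\cdot(\text{something}))$-type inequalities that hold precisely under the assumption $\nu_n=o(n^{4/d})$ with $c\le m/n\le C$. This yields
$$\tfrac{nm}{\sqrt 2(n+m)}(2\nu_n/\pi)^{d/4}\hat\gamma_{\nu_n}^2(\PP,\QQ)\to_d N\bigl(0,\|p\|_{L_2}^2\bigr).$$

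Next I would show $\hat s_{n,m,\nu_n}^2$ consistently estimates $\EE_{(X,Y)\sim\PP\otimes\QQ}\bar G_{\nu_n}^2(X,Y)$ after rescaling. The three $U$-statistics defining $\tilde s_{n,m,\nu_n}^2$ are designed to be unbiased estimators of $\EE G_{2\nu_n}(Z,Z')$, $\EE[G_{\nu_n}(Z,Z')G_{\nu_n}(Z,Z'')]$, and $(\EE G_{\nu_n}(Z,Z'))^2$ respectively, so their sum is unbiased for $\EE\bar G_{\nu_n}^2$. A standard $U$-statistic variance bound, together with the Gaussian moment calculations above, shows $(2\nu_n/\pi)^{d/2}\tilde s_{n,m,\nu_n}^2\to_p\|p\|_{L_2}^2$; since this limit is strictly positive, the truncation at $1/n^2$ is inactive with probability tending to one and $\hat s_{n,m,\nu_n}^2$ has the same limit. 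Slutsky's theorem then delivers $T_{n,\nu_n}^{\rm HOM}\to_d N(0,1)$.

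The main obstacle is the degenerate $U$-statistic CLT in the regime where $\nu_n\to\infty$, because the effective "complexity" of the kernel grows with $n$ and the classical Hoeffding limit (mixture of $\chi^2_1$) is replaced by a Gaussian limit. Getting the scaling $\nu_n=o(n^{4/d})$ precisely right requires careful bookkeeping of the leading constants in the moment computations and in the verification of de Jong's ratio condition; once this is done, the two-sample structure and the variance estimator pose no additional difficulty beyond keeping track of the comparable rates between $n$ and $m$.
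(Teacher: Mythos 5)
Your proposal follows essentially the same route as the paper: identify $\hat\gamma_{\nu_n}^2(\PP,\QQ)$ under $H_0$ as a degenerate two-sample quadratic form in the common kernel $\bar G_{\nu_n}$, invoke a CLT for degenerate $U$-statistics with $n$-dependent kernel (the paper cites \cite{hall1984central} and adapts it; de Jong's conditions are the standard alternative and are equivalent in substance), establish the leading-order variance $\EE\bar G_{\nu_n}^2 \sim (\pi/(2\nu_n))^{d/2}\|p\|_{L_2}^2$, prove consistency of $\hat s_{n,m,\nu_n}^2$ as in Theorem~\ref{th:gofnull}, and finish with Slutsky. The paper additionally replaces the random ratio $r_n=n/N$ by its limit $r$ and controls the difference, but this is a technicality that your ``$c\le m/n\le C$'' bookkeeping subsumes.

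One concrete slip: you write $\EE\bar G_{\nu_n}^4(Z,Z')=O(\nu_n^{-d/2}\cdot\nu_n^{-d/2})=O(\nu_n^{-d})$. This is the square of the second moment, not the fourth moment. Since $G_{\nu_n}^4=G_{4\nu_n}$, the correct order is $\EE\bar G_{\nu_n}^4\lesssim\EE G_{4\nu_n}\asymp\nu_n^{-d/2}$. With your stated $O(\nu_n^{-d})$ bound, Hall's ratio condition $\EE\bar G^4/(n^2[\EE\bar G^2]^2)\to 0$ would hold trivially for any $\nu_n$, which contradicts the fact that the scaling constraint $\nu_n=o(n^{4/d})$ is needed; with the correct $O(\nu_n^{-d/2})$ bound the ratio is $\asymp\nu_n^{d/2}/n^2$, which vanishes precisely when $\nu_n=o(n^{4/d})$. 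Since you do correctly identify $\nu_n=o(n^{4/d})$ as the operative assumption, this appears to be a notational error rather than a conceptual one, but it should be corrected so the moment bookkeeping actually produces the stated rate.
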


Motivated by Theorem \ref{th:homnull}, we can consider a test, denoted by $\Phi_{n,\nu_n,\alpha}^{\rm HOM}$, that rejects $H_0^{\rm HOM}$ if and only if $T_{n,\nu_n}^{\rm HOM}$ exceeds $z_{\alpha}$. By construction, $\Phi^{\rm HOM}_{n,\nu_n,\alpha}$ is an asymptotic $\alpha$ level test. We now turn to study its power against $H_1^{\rm HOM}$. As in the case of goodness of fit test, we can prove that $\Phi^{\rm HOM}_{n,\nu_n,\alpha}$ is minimax optimal in that it can detect the smallest difference between $p$ and $q$ in terms of rate of convergence. More precisely, we have

\begin{theorem}
\label{th:hompower}
\begin{enumerate}[(i)]
	\item Assume that $n^{2s/(d+4s)}\Delta_n\to \infty$. Then for any $\alpha\in(0,1)$,
	$$
	\lim_{n\to\infty}{\rm power}\{\Phi^{\rm HOM}_{n,\nu_n,\alpha}; H_1^{\rm HOM}(\Delta_n; s)\}\to 1,
	$$
	provided that $\nu_n\asymp n^{4/(d+4s)}$.
	\item Conversely, if $\lim\inf_{n\to\infty}n^{2s/(d+4s)}\Delta_n<\infty$, then there exists some $\alpha\in(0,1)$ such that for any test $\Phi_n$ of level $\alpha$ (asymptotically) based on $X_1,\ldots,X_n\sim p$ and $Y_1,\ldots, Y_m\sim q$, 
	$$
	\liminf_{n\to\infty}{\rm power}\{\Phi_n; H_1^{\rm HOM}(\Delta_n; s)\}<1.
	$$
\end{enumerate} 
\end{theorem}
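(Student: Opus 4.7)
The plan is to adapt the template of Theorems~\ref{th:gofpower} and \ref{th:goflower} to the two-sample setting, where the new wrinkle is the three-piece structure of $\hat{\gamma_{\nu_n}^2}(\PP,\QQ)$ and the fact that both densities are unknown.

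For the upper bound in (i), I would start from $\EE\hat{\gamma_{\nu_n}^2}(\PP,\QQ)=\gamma_{\nu_n}^2(\PP,\QQ)$ and use Parseval together with $\Fcal(G_{\nu_n})(\omega)=(2\nu_n)^{-d/2}e^{-\|\omega\|^2/(4\nu_n)}$ to write
$$
(\nu_n/\pi)^{d/2}\gamma_{\nu_n}^2(\PP,\QQ)=\int_{\RR^d}e^{-\|\omega\|^2/(4\nu_n)}|\Fcal(p-q)(\omega)|^2\,d\omega.
$$
Combining $1-e^{-t}\le t^s$ with $p,q\in\Wcal^{s,2}(M)$ bounds the truncation bias by $O(\nu_n^{-s})$, so $\gamma_{\nu_n}^2(\PP,\QQ)\ge (\pi/\nu_n)^{d/2}(\Delta_n^2-C\nu_n^{-s})$, and with $\nu_n\asymp n^{4/(d+4s)}$ and $n^{2s/(d+4s)}\Delta_n\to\infty$ this yields $\gamma_{\nu_n}^2(\PP,\QQ)\gtrsim (\pi/\nu_n)^{d/2}\Delta_n^2$. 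A Hoeffding decomposition of the two-sample U-statistic, using $\EE G_{2\nu_n}(X,Y)\sim (\pi/(2\nu_n))^{d/2}\int pq$ for the dominant piece, gives
$$
\var\bigl(\hat{\gamma_{\nu_n}^2}(\PP,\QQ)\bigr)\lesssim N^{-2}(\pi/\nu_n)^{d/2}\bigl(\|p\|_{L_2}^2+\|q\|_{L_2}^2\bigr),\qquad N=n+m,
$$
the remaining Hoeffding terms being lower order thanks to $\nu_n=o(n^{4/d})$; a parallel calculation in the spirit of Theorem~\ref{th:homnull} shows $\hat{s}^2_{n,m,\nu_n}$ is of the same order in probability under $H_1^{\rm HOM}$. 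Chebyshev then gives
$$
T^{\rm HOM}_{n,\nu_n}\gtrsim N(\pi/\nu_n)^{d/4}\Delta_n^2\asymp \bigl(n^{2s/(d+4s)}\Delta_n\bigr)^{2}\longrightarrow\infty,
$$
and the test rejects with probability tending to one uniformly over the alternative class.

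For the lower bound in (ii), I would run the classical Ingster bump construction in the two-sample setting. Fix a reference density $p_0\in\Wcal^{s,2}(M/2)$ that is bounded below by $c>0$ on some region $\Omega\subset\RR^d$ and a smooth, compactly supported $\phi$ with $\int\phi=0$. With $h\asymp n^{-2/(d+4s)}$, place $K\asymp h^{-d}$ disjoint rescaled bumps $\phi_k(x)=\phi((x-x_k)/h)$ inside $\Omega$, and for $\epsilon\in\{\pm 1\}^K$ set $q_\epsilon=p_0+c_n\sum_k\epsilon_k\phi_k$ with $c_n\asymp h^s$. Fourier scaling shows $q_\epsilon\in\Wcal^{s,2}(M)$, nonnegativity follows for $n$ large, and disjoint supports give $\|q_\epsilon-p_0\|_{L_2}\asymp h^s\asymp n^{-2s/(d+4s)}$, which can be arranged to be $\ge\Delta_n$ (along the relevant subsequence) by adjusting constants. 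Take $p=p_0$ so the $X$-sample is uninformative; by Le Cam's two-point lemma it suffices to bound $\chi^2(\bar\QQ^{\otimes m},\PP_0^{\otimes m})$, where $\bar\QQ$ is the uniform mixture of $q_\epsilon$. Expanding and using disjoint supports together with $\int\phi=0$,
$$
\chi^2+1=\EE_{\epsilon,\epsilon'}\Bigl(1+c_n^2\sum_k\epsilon_k\epsilon'_k\!\int\!\phi_k^2/p_0\,dx\Bigr)^{m}\le(\cosh a)^K\le\exp(Ka^2/2),\qquad a\asymp mc_n^2 h^d.
$$
Direct substitution gives $Ka^2=m^2c_n^4h^d\asymp 1$, so $\chi^2$ stays bounded and the minimax sum of Type~I and Type~II errors is bounded away from zero, which yields the claim.

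The main obstacle I expect is the variance analysis of $\hat{\gamma_{\nu_n}^2}(\PP,\QQ)$: the two-sample Hoeffding decomposition has several projection pieces whose $\nu_n$-scalings must be tracked carefully so that the leading $N^{-2}(\pi/\nu_n)^{d/2}$ contribution is not swamped by the $N^{-1}$-order non-degenerate term, and this tracking is precisely what forces the upper constraint $\nu_n=o(n^{4/d})$ and propagates into a matching bound for $\hat{s}^2_{n,m,\nu_n}$ under $H_1^{\rm HOM}$. The lower-bound side is classical in spirit, but some care is needed to verify that $q_\epsilon\in\Wcal^{s,2}(M)$ uniformly in $\epsilon$ via Fourier scaling of the bumps, and that the approximation $(1+x)^m\le e^{mx}$ used when passing from the binomial to the exponential form in the $\chi^2$ calculation does not introduce a remainder of the wrong order.
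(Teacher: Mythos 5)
Your Part (ii) is a legitimate alternative route: instead of reducing the two-sample lower bound to the already-proved goodness-of-fit lower bound (which is what the paper does — it fixes $q=p_0$, observes that any $\alpha$-level two-sample test $\Phi_n(X_1,\ldots,X_n,Y_1,\ldots,Y_m)$ with $Y_i\sim P_0$ is a (randomized) $\alpha$-level goodness-of-fit test, and then invokes Theorem~\ref{th:goflower}), you redo the Ingster bump construction and the $\chi^2$ calculation directly on the $Y$-sample. These are the same underlying calculation; the paper's version is more modular since it reuses Theorem~\ref{th:goflower} as a black box, while yours is self-contained, and either is fine.

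There is, however, a genuine gap in your Part (i). Your claimed bound on the truncation bias relies on the inequality $1-e^{-t}\le t^s$, which is \emph{false} for $s>1$ (e.g.\ $1-e^{-0.1}\approx 0.095>0.01=(0.1)^2$). As a result, the naive expansion only yields $\gamma_{\nu_n}^2(\PP,\QQ)\ge(\pi/\nu_n)^{d/2}\bigl(\Delta_n^2-C\nu_n^{-\min(s,1)}M^2\bigr)$, and for $s\ge 1$ the error term $\nu_n^{-1}M^2$ is \emph{not} dominated by $\Delta_n^2$ at the scaling $\nu_n\asymp n^{4/(d+4s)}$, $\Delta_n\asymp n^{-2s/(d+4s)}$: one has $\nu_n^{-1}\asymp n^{-4/(d+4s)}$ versus $\Delta_n^2\asymp n^{-4s/(d+4s)}$, so the error term wins whenever $s\ge 1$. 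The correct argument is the truncation device of the paper's Lemma~\ref{le:gaussmmd}: split the Fourier integral at a frequency $T\asymp(M/\|f\|_{L_2})^{1/s}$, control the tail using the $\Wcal^{s,2}$ norm, and use $e^{-T^2/(4\nu_n)}\ge$ constant once $\nu_n\gtrsim T^2\asymp\|f\|_{L_2}^{-2/s}$. That gives $\gamma_{\nu_n}^2(\PP,\QQ)\gtrsim\nu_n^{-d/2}\|p-q\|_{L_2}^2$ for all $s>0$ under exactly the assumed scaling. You should replace your $1-e^{-t}\le t^s$ step with this truncation argument.

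A secondary, less consequential inaccuracy: you state $\var(\hat{\gamma_{\nu_n}^2}(\PP,\QQ))\lesssim N^{-2}\nu_n^{-d/2}$ and worry about whether the first-order (Hájek) term might ``swamp'' it. In fact, under $H_1^{\rm HOM}$ the first-order term is $\asymp N^{-1}\nu_n^{-3d/4}\|p-q\|_{L_2}^2$ and \emph{does} dominate the degenerate $N^{-2}\nu_n^{-d/2}$ term under your assumed scaling. The right thing to check is not that the first-order piece is smaller than the degenerate piece, but that \emph{both} are $o(\gamma_{\nu_n}^4(\PP,\QQ))$; that is precisely what the paper verifies (it shows $\nu_n^{d/2}/(n^2\Delta_n^4)\to 0$ and $\nu_n^{d/4}/(n\Delta_n^2)\to 0$), and it saves your final display $T_{n,\nu_n}^{\rm HOM}\gtrsim(n^{2s/(d+4s)}\Delta_n)^2\to\infty$.
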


\section{Test for Independence}
\label{sec:ind}

Similarly, we can also use Gaussian kernel embedding to construct minimax optimal tests of independence. Let $X=(X^1,\ldots, X^k)^\top \in \RR^{d}$ be a random vector where the subvectors $X^j\in \RR^{d_j}$ for $j=1,\ldots, k$ so that $d_1+\cdots+d_k=d$. Denote by $p$ the joint density function of $X$, and $p_j$ the marginal density of $X^j$. We assume that both the joint density and the marginal densities are smooth. Specifically, we shall consider testing
$$
H_0^{\rm IND}: p=p_1\otimes\cdots\otimes p_k,\ p_j\in \Wcal^{s,2}(M_j),\ 1\leq j\leq k
$$
against a smooth departure from independence:
$$
H_1^{\rm IND}(\Delta_n; s): p\in \Wcal^{s,2}(M),\ p_j\in\Wcal^{s,2}(M_j),\ 1\leq j\leq k {\rm\ and\ } \|p-p_1\otimes\cdots\otimes p_k\|_{L_2}\ge \Delta_n,
$$
where $M=\prod\limits_{j=1}^k M_j$ so that $p_1\otimes\cdots\otimes p_k\in\Wcal^{s,2}(M)$ under both null and alternative hypotheses.

Given a sample $\{X_1,\ldots, X_n\}$ of independent copies of $X$, we can naturally estimate the so-called dHSIC $\gamma_{\nu_n}^2(\PP,\PP^{X^1}\otimes\cdots\otimes \PP^{X^k})$ by
\begin{eqnarray*}
\gamma_{\nu_n}^2(\hat{\PP}_n,\hat{\PP}_n^{X^1}\otimes\cdots\otimes \hat{\PP}_n^{X^k})&=&\frac{1}{n^2}\sum_{1\le i, j\le n}G_{\nu_n}(X_i,X_j)\\
&&+{1\over n^{2k}}\sum_{1\le i_1,\ldots,i_k, j_1\ldots, j_k\le n}G_{\nu_n}((X^1_{i_1},\ldots,X^k_{i_k}),(X^1_{j_1},\ldots,X^k_{j_k}))\\
&&-{2\over n^{k+1}}\sum_{1\le i, j_1,\ldots, j_k\le n}G_{\nu_n}(X_i,(X^1_{j_1},\ldots,X^k_{j_k})).
\end{eqnarray*}
To correct for the bias, we shall consider the following estimate of $\gamma_{\nu_n}^2(\PP,\PP^{X^1}\otimes\cdots\otimes \PP^{X^k})$ instead.
\begin{eqnarray*}
\hat{\gamma_{\nu_n}^2}(\PP,\PP^{X^1}\otimes\cdots\otimes \PP^{X^k})&=&\frac{1}{n(n-1)}\sum_{1\leq i\neq j\leq n}G_{\nu_n}(X_i,X_j)\\
&&+{(n-2k)!\over n!}\sum_{\substack{1\leq i_1,\cdots,i_k,j_1,\cdots,j_k\leq n\\ |\{i_1,\cdots,i_k,j_1,\cdots,j_k\}|=2k}}G_{\nu_n}((X^1_{i_1},\ldots,X^k_{i_k}),(X^1_{j_1},\ldots,X^k_{j_k}))\\
&&-{2(n-k-1)!\over n!}\sum_{\substack{1\le i,j_1,\cdots,j_k\le n\\ |\{i,j_1,\cdots,j_k\}|=k+1}}G_{\nu_n}(X_i,(X^1_{j_1},\ldots,X^k_{j_k})).
\end{eqnarray*}

Under $H_0^{\rm IND}$, we have
$$
\EE\hat{\gamma_{\nu_n}^2}(\PP,\PP^{X^1}\otimes\cdots\otimes \PP^{X^k})=0.
$$
Deriving its variance, however, requires a bit more work. Write
$$
h_j(x^j,y)=\EE_{X\sim \PP^{X^1}\otimes\cdots\otimes\PP^{X^k}} G_{\nu_n}((X^1,\ldots, X^{j-1},x^j,X^{j+1},\ldots, X^k), y)
$$
and
$$
g_j(x^j,y)=h_j(x^j,y)-\EE_{X^j\sim \PP^{X^j}}h_j(X^j, y)-\EE_{Y\sim \PP}h_j(x^j, Y)+\EE_{(X^j,Y)\sim \PP^{X^j}\otimes \PP}h_j(X^j, Y).
$$
With slight abuse of notation, also denote by
\begin{align*}
h_{j_1,j_2}(x^{j_1},y^{j_2})=\EE_{X,Y\sim_{\rm iid} \PP^{X^1}\otimes\cdots\otimes\PP^{X^k}} G_{\nu_n}(&(X^1,\ldots, X^{j_1-1},x^{j_1},X^{j_1+1},\ldots, X^k),\\
&(Y^1,\ldots, Y^{j_2-1},y^{j_2},Y^{j_2+1},\ldots, Y^k))
\end{align*}
and
\begin{align*}
g_{j_1,j_2}(x^{j_1},y^{j_2})=&h_{j_1,j_2}(x^{j_1},y^{j_2})-\EE_{X^{j_1}\sim \PP^{X^{j_1}}}h_{j_1,j_2}(X^{j_1}, y^{j_2})\\
&-\EE_{X^{j_2}\sim \PP^{X^{j_2}}}h_{j_1,j_2}(x^{j_1}, X^{j_2})+\EE_{(X^{j_1},Y^{j_2})\sim \PP^{X^{j_1}}\otimes \PP^{X^{j_2}}}h_{j_1,j_2}(X^{j_1}, Y^{j_2}).
\end{align*}
Then we have
\begin{lemma}\label{le:var}
Under $H_0^{\rm IND}$,
\begin{align}
{\rm var}\left(\hat{\gamma^2_{\nu_n}}(\PP,\PP^{X^1}\otimes\cdots\otimes\PP^{X^k})\right)\nonumber=&\frac{2}{n(n-1)}\bigg(\EE \bar{G}_{\nu_n}^2(X,Y)-2\sum\limits_{1\leq j\leq k}\EE\left(g_j(X^{j},Y)\right)^2\nonumber\\
&+\sum\limits_{1\leq j_1,j_2\leq k}\EE \left(g_{j_1,j_2}(X^{j_1},Y^{j_2})\right)^2\bigg)+O(\EE G_{2\nu_n}(X,Y)/n^3).\label{eq:var1}
\end{align}
\end{lemma}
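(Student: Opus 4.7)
The plan is to compute $\mathrm{Var}(\hat{\gamma_{\nu_n}^2})$ via Hoeffding's decomposition of the three U-statistics comprising $\hat{\gamma_{\nu_n}^2}$, and exploit the fact that the statistic is first-order degenerate under $H_0^{\rm IND}$ (so that its variance is of order $n^{-2}$). Write $\hat{\gamma_{\nu_n}^2}(\PP,\PP^{X^1}\otimes\cdots\otimes\PP^{X^k}) = U_1 + U_2 - 2U_3$, with $U_1, U_2, U_3$ U-statistics of orders $r_1 = 2$, $r_2 = 2k$, $r_3 = k+1$. The Hoeffding decomposition $U_i - \EE U_i = \sum_{c=1}^{r_i}\binom{r_i}{c}V_c^{U_i}$ expresses each $U_i$ in terms of canonical degenerate kernels $g_c^{U_i}$. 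Under $H_0^{\rm IND}$, a direct computation using the product structure $p = p_1\otimes\cdots\otimes p_k$ yields
\begin{align*}
g_1^{U_1}(x) &= f(x) - \EE G_{\nu_n},\\
g_1^{U_2}(x) &= \tfrac{1}{k}\sum_l \EE_Y h_l(x^l, Y) - \EE G_{\nu_n},\\
g_1^{U_3}(x) &= \tfrac{1}{k+1}\bigl[f(x) + \sum_l \EE_Y h_l(x^l, Y)\bigr] - \EE G_{\nu_n},
\end{align*}
with $f(x) = \EE_Y G_{\nu_n}(x, Y)$. Then $r_1 g_1^{U_1} + r_2 g_1^{U_2} - 2r_3 g_1^{U_3} \equiv 0$: the $f$-terms and the $\EE_Y h_l$-terms cancel pairwise, so $\hat{\gamma_{\nu_n}^2}$ is first-order degenerate.

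The leading variance is then $\binom{n}{2}^{-1}\EE\tilde{\phi}^2$, where $\tilde{\phi} = g_2^{U_1} + \binom{2k}{2}g_2^{U_2} - 2\binom{k+1}{2}g_2^{U_3}$ is the combined second-order canonical kernel. Apply the ANOVA decomposition of $\tilde{\phi}$ in the product space $(X_1, X_2) \in (\RR^{d_1}\times\cdots\times\RR^{d_k})^2$ and establish two facts. First, $g_2^{U_2}$ and $g_2^{U_3}$ are built entirely from $h$- and $f$-type terms, each of which depends on at most one component of each observation (or on a full observation only through $f$); consequently their ANOVA components vanish whenever $|S_1| \ge 2$ and $|S_2| \ge 2$, so $\tilde{\phi}_{S_1, S_2}$ coincides with the corresponding ANOVA piece of $g_2^{U_1} = \bar{G}_{\nu_n}$, namely $\bar{G}_{S_1, S_2}$, in this regime. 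Second, verify $\tilde{\phi}_{S_1, S_2} = 0$ for $|S_1| \le 1$ or $|S_2| \le 1$ by directly checking $\EE[\tilde{\phi} \mid X_1^l, X_2] = 0$; this reduces, via the identities $\EE_{X^l}h_l(X^l, y) = f(y)$ and $\EE_{X^{l_1}}h_{l_1, l_2}(X^{l_1}, y^{l_2}) = \EE_Y h_{l_2}(y^{l_2}, Y)$, to cancellation of the various $h$ and $f$ terms introduced by the subtracted first-order projections. By orthogonality of ANOVA components, $\EE\tilde{\phi}^2 = \sum_{|S_1|, |S_2| \ge 2}\EE\bar{G}_{S_1, S_2}^2$.

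Using the identifications $\bar{G}_{\{l_1\}, \{l_2\}} = g_{l_1, l_2}$ and $g_l = \sum_{S_2 \neq \emptyset}g_l^{(S_2)}$ with $g_l^{(\{l'\})} = g_{l, l'}$, an inclusion-exclusion rearrangement of the squared ANOVA pieces gives
\[
\EE\tilde{\phi}^2 = \EE\bar{G}_{\nu_n}^2 - 2\sum_l \EE g_l^2 + \sum_{l_1, l_2}\EE g_{l_1, l_2}^2,
\]
which yields the $\tfrac{2}{n(n-1)}$ prefactor in \eqref{eq:var1} after multiplying by $\binom{n}{2}^{-1}$. The higher-order Hoeffding components $V_c$ for $c \ge 3$ contribute variance of order $n^{-c}$ or smaller; bounding the $L^2$ norms of their canonical kernels via $G_{\nu_n}^2 = G_{2\nu_n}$ and Cauchy--Schwarz yields the error term $O(\EE G_{2\nu_n}(X, Y)/n^3)$.

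The main obstacle is the ANOVA identification in the middle paragraph: although the degeneracy check $\EE[\tilde{\phi}\mid X_1^l, X_2] = 0$ follows from the two collapse identities once the explicit form of $\tilde{\phi}$ is in hand, computing $g_2^{U_2}$ and $g_2^{U_3}$ requires symmetrizing the asymmetric degree-$r_i$ kernels of $U_2, U_3$ over $r_i!$ permutations and summing over $r_i(r_i-1)$ ordered position pairs. The crucial combinatorial cancellation is that the ``same-argument'' position pairs (two observations both feeding the first or both the second argument of $G_{\nu_n}$) vanish exactly when weighted by $\binom{2k}{2}$ in $U_2$ and by $-2\binom{k+1}{2}$ in $U_3$.
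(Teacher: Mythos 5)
Your proposal is correct and takes essentially the same approach as the paper: both isolate the degenerate second-order Hoeffding/ANOVA component of the statistic (the paper's $D_2(\nu_n)=\frac{1}{n(n-1)}\sum_{i\neq j}G_{\nu_n}^*(X_i,X_j)$, which is exactly your $\binom{n}{2}^{-1}\sum_{i<j}\tilde{\phi}(X_i,X_j)$ with $\tilde{\phi}=G_{\nu_n}^*$), compute its second moment via orthogonality of ANOVA pieces in the product space together with inclusion--exclusion over $\{|S_1|\ge 2,\,|S_2|\ge 2\}$, and bound the higher-order components by crude moment estimates using $G_{\nu_n}^2=G_{2\nu_n}$. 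The only distinction is bookkeeping: you run the standard Hoeffding decomposition separately on each of $U_1,U_2,U_3$ and reassemble the combined canonical kernel with binomial weights, whereas the paper expands each kernel evaluation directly into centered pieces of the product measure and collects by multiplicity ($D_l$) -- the two routes produce the same decomposition and the same leading-order variance.
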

In light of Lemma \ref{le:var}, a variance estimator can be derived by estimating the leading term on the righthand side of \eqref{eq:var1} term by term using $U$-statistics. Formulae for estimating the variance for general $k$ are tedious and we defer them to the appendix for space consideration. In the special case when $k=2$, the leading term on the righthand side of \eqref{eq:var1} takes a much simplified form:
$$
\frac{2}{n(n-1)}\EE\bar{G}_{\nu_n}(X^1,Y^1)\cdot\EE\bar{G}_{\nu_n}(X^2,Y^2),
$$
where $X^j,Y^j\sim_{\rm iid} \PP^{X^j}$ for $j=1,2$. Thus, we can estimate $\EE[\bar{G}_{\nu_n}(X^j,Y^j)]^2$ by
\begin{align*}
\tilde{s}^2_{n,j,\nu_n}=&\frac{1}{n(n-1)}\sum\limits_{1\leq i_1\neq i_2\leq n}G_{2\nu_n}(X_{i_1}^j,X^j_{i_2})\\&-\frac{2(n-3)!}{n!}\sum\limits_{\substack{1\le i,l_1,l_2\le n\\ |\{i,l_1,l_2\}|=3}}G_{\nu_n}(X_i^j,X_{l_1}^j)G_{\nu_n}(X_i^j,X_{l_2}^j)\\&+\frac{(n-4)!}{n!}\sum\limits_{\substack{1\le i_1,i_2,l_1,l_2\le n\\ |\{i_1,i_2,l_1,l_2\}|=4}}G_{\nu_n}(X_{i_1}^j,X_{l_1}^j)G_{\nu_n}(X_{i_2}^j,X_{l_2}^j)
\end{align*}
and ${\rm var}(\hat{\gamma^2_{\nu_n}}(\PP,\PP^{X^1}\otimes\PP^{X^2}))$ by $2/[n(n-1)]\hat{s}^2_{n,\nu_n}$ where
$$
\hat{s}^2_{n,\nu_n}:=\max\left\{\tilde{s}^2_{n,1,\nu_n}\tilde{s}^2_{n,2,\nu_n}, 1/n^2\right\}.
$$
so that a test statistic for $H_0^{\rm IND}$ is
$$
T_{n,\nu_n}^{\rm IND}:={n\over\sqrt{2}}\hat{s}^{-1}_{n,\nu_n}\hat{\gamma_{\nu_n}^2}(\PP,\PP^{X^1}\otimes\PP^{X^2}).
$$
Test statistics for general $k>2$ can be defined accordingly. Again, we have
\begin{theorem}
\label{th:indnull}
Let $\nu_n\to \infty$ as $n\to \infty$ in such a fashion that $\nu_n=o(n^{4/d})$. Then under $H_0^{\rm IND}$, 
$$
T_{n,\nu_n}^{\rm IND}\to_d N(0,1),\qquad {\rm as\ }n\to\infty.
$$
\end{theorem}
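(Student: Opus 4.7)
The plan is to adapt the two-step strategy used for Theorems \ref{th:gofnull} and \ref{th:homnull} to the independence setting, focusing on the explicit $k=2$ statistic given in the excerpt (the general $k$ case follows by analogous but heavier index bookkeeping).

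First I would pin down the true variance and the correct normalization. Under $H_0^{\rm IND}$ with $k=2$, the joint density equals the product of marginals and $\hat{\gamma_{\nu_n}^2}(\PP,\PP^{X^1}\otimes\PP^{X^2})$ is an unbiased $U$-statistic estimator of zero. By Lemma \ref{le:var} specialized to $k=2$ under independence, together with the Gaussian convolution identity $\int G_\nu(x,z)G_\nu(z,y)dz=(\pi/(2\nu))^{d/2}G_{\nu/2}(x,y)$, the variance factorizes as
$$
\var\bigl(\hat{\gamma_{\nu_n}^2}\bigr)=\frac{2}{n(n-1)}\,\EE[\bar G_{\nu_n}(X^1_1,X^1_2)]^2\cdot\EE[\bar G_{\nu_n}(X^2_1,X^2_2)]^2\bigl(1+o(1)\bigr),
$$
where $X^j_1,X^j_2\sim_{\rm iid}\PP^{X^j}$. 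The same Fourier reasoning that led to \eqref{eq:var} yields $\EE[\bar G_{\nu_n}(X^j_1,X^j_2)]^2=(\pi/(2\nu_n))^{d_j/2}\|p_j\|_{L_2}^2(1+o(1))$, so the true variance equals $2n^{-2}(\pi/(2\nu_n))^{d/2}\|p_1\|_{L_2}^2\|p_2\|_{L_2}^2(1+o(1))$, which matches the $n/\sqrt{2}$ factor in $T_{n,\nu_n}^{\rm IND}$.

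Second, I would establish asymptotic normality of the numerator. Applying the Hoeffding decomposition to the three sums defining $\hat{\gamma_{\nu_n}^2}$, the rank-one projections cancel exactly under $H_0^{\rm IND}$ because $p=p_1\otimes p_2$, so the statistic is a completely degenerate second-order $U$-statistic at leading order; the extra terms produced by the bias correction (the $(2k)$-tuple and $(k+1)$-tuple sums) have variances smaller by at least one factor of $n^{-1}$ per additional distinct index and may be absorbed into the remainder. A fixed-kernel degenerate $U$-statistic would converge to a weighted $\chi^2$ sum rather than a Gaussian, but the triangular-array nature $\nu_n\to\infty$ causes the eigenvalues of the associated integral operator to flatten, and a CLT of the de Jong or Hall type for degenerate quadratic forms applies: it suffices to verify the Lyapunov-type ratio $\EE H_{\nu_n}^4/(\EE H_{\nu_n}^2)^2=o(n^2)$ for the effective rank-two kernel $H_{\nu_n}$. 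By direct Gaussian convolution this reduces to $\nu_n^{d/2}=o(n^2)$, which is exactly the assumption $\nu_n=o(n^{4/d})$. Third, I would prove that the plug-in variance is ratio-consistent, $\tilde s_{n,j,\nu_n}^2/\EE[\bar G_{\nu_n}(X^j_1,X^j_2)]^2\to_p 1$ for $j=1,2$. Each $\tilde s_{n,j,\nu_n}^2$ is a sum of three $U$-statistics estimating the three summands of $\EE[\bar G_{\nu_n}(X^j_1,X^j_2)]^2$; their variances can be bounded by the same Gaussian-convolution computations, and Chebyshev's inequality together with $\nu_n=o(n^{4/d})$ delivers consistency. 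Since $\EE[\bar G_{\nu_n}(X^j_1,X^j_2)]^2\asymp\nu_n^{-d_j/2}\gg n^{-2}$ under the bandwidth assumption, the truncation at $1/n^2$ in $\hat s_{n,\nu_n}^2$ is inactive with probability tending to one, so $\hat s_{n,\nu_n}^2$ inherits the same consistency. Slutsky's lemma then combines this with the CLT of the previous step to give $T_{n,\nu_n}^{\rm IND}\to_d N(0,1)$.

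The main obstacle will be the degenerate $U$-statistic CLT with a diverging-scale kernel: because $G_{\nu_n}$ concentrates toward a delta function, standard fixed-kernel CLTs for degenerate $U$-statistics do not apply, and the Hoeffding components have to be controlled through a triangular-array moment computation. A secondary but delicate task is verifying that the bias-correction sums with up to $2k$ distinct indices contribute only to the remainder; one must show that each additional independent index yields a genuine factor of $n^{-1}$ in variance rather than merely a combinatorial saving, a point that becomes nontrivial for general $k>2$.
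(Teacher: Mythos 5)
Your overall strategy matches the paper's: decompose $\hat{\gamma_{\nu_n}^2}$ under $H_0^{\rm IND}$ into a leading degenerate second-order $U$-statistic plus a higher-order remainder, identify the factorized variance $\frac{2}{n(n-1)}\prod_{j}\EE[\bar G_{\nu_n}(X^j_1,X^j_2)]^2(1+o(1))$, apply a triangular-array CLT for degenerate $U$-statistics, and finish by ratio-consistency of $\hat s_{n,\nu_n}^2$ and Slutsky. The remainder analysis (each extra distinct index buys a factor of $n^{-1}$ in variance), the observation that the truncation at $1/n^2$ is inactive, and the variance factorization under independence are all in agreement with the paper.

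However, your description of what it takes to get the CLT understates the hardest step and, as stated, would not deliver the result. You claim that after degeneracy, "it suffices to verify the Lyapunov-type ratio $\EE H_{\nu_n}^4/(\EE H_{\nu_n}^2)^2=o(n^2)$ ... which reduces to $\nu_n^{d/2}=o(n^2)$." That is only Hall's condition of type \eqref{lc1}. Hall's theorem (and equally de Jong's) also requires a condition of type \eqref{lc2} on $\EE[\bar G_{\nu_n}^2(X_1,X_2)\bar G_{\nu_n}^2(X_1,X_3)]$ and, crucially, the condition of type \eqref{lc3} on the composition kernel $H_{\nu_n}(x,y)=\EE[\bar G_{\nu_n}(x,X_3)\bar G_{\nu_n}(y,X_3)]$, namely $\EE H_{\nu_n}^2/(\EE\bar G_{\nu_n}^2)^2\to 0$. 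It is this last condition — a statement about the flattening of the spectrum of the integral operator, not a moment-growth bound — that separates a Gaussian limit from the weighted $\chi^2$ limit you would get at fixed $\nu$, and it does \emph{not} reduce to $\nu_n^{d/2}=o(n^2)$. In the paper this is verified through a genuinely nontrivial Fourier-analytic argument establishing uniform convergence of a conditional characteristic-function integral over a compact set; nothing in your proposal addresses it. Without it, your "de Jong or Hall type" CLT cannot be invoked.

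A secondary but useful point: the leading degenerate kernel here is $G_{\nu_n}^*$, not $\bar G_{\nu_n}$. Rather than reverify all of Hall's conditions for $G_{\nu_n}^*$ directly (which is what your sketch implicitly does), the paper shows $\EE(G_{\nu_n}^*-\bar G_{\nu_n})^2$ is of strictly smaller order than $\EE\bar G_{\nu_n}^2$, so the leading term can be replaced by the $D(\nu_n)=\frac{1}{n(n-1)}\sum_{i\neq j}\bar G_{\nu_n}(X_i,X_j)$ statistic and the CLT work from Theorem \ref{th:gofnull} (including the hard \eqref{lc3} verification) can be reused wholesale. Adding this reduction step would let you close the gap above without redoing the Fourier argument from scratch.
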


Motivated by Theorem \ref{th:indnull}, we can consider a test, denoted by $\Phi_{n,\nu_n,\alpha}^{\rm IND}$, that rejects $H_0^{\rm IND}$ if and only if $T_{n,\nu_n}^{\rm IND}$ exceeds $z_{\alpha}$. By construction, $\Phi^{\rm IND}_{n,\nu_n,\alpha}$ is an asymptotic $\alpha$ level test. We now turn to study its power against $H_1^{\rm IND}$. As in the case of goodness of fit test, we can prove that $\Phi^{\rm HOM}_{n,\nu_n,\alpha}$ is minimax optimal in that it can detect the smallest departure from independence in terms of rate of convergence. More precisely, we have

\begin{theorem}
\label{th:indpower}
\begin{enumerate}[(i)]
	\item Assume that $n^{2s/(d+4s)}\Delta_n\to \infty$. Then for any $\alpha\in(0,1)$,
	$$
	\lim_{n\to\infty}{\rm power}\{\Phi^{\rm IND}_{n,\nu_n,\alpha}; H_1^{\rm IND}(\Delta_n; s)\}\to 1,
	$$ provided that $\nu_n\asymp n^{4/(d+4s)}$.
	\item Conversely, if $\lim\inf_{n\to\infty}n^{2s/(d+4s)}\Delta_n<\infty$, then there exists some $\alpha\in(0,1)$ such that for any test $\Phi_n$ of level $\alpha$ (asymptotically) based on $X_1,\ldots,X_n\sim p$, 
	$$
	\liminf_{n\to\infty}{\rm power}\{\Phi_n; H_1^{\rm IND}(\Delta_n; s)\}<1.
	$$
\end{enumerate} 
\end{theorem}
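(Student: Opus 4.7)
The plan for both parts mirrors the proofs of Theorems \ref{th:gofpower} and \ref{th:goflower}, with added bookkeeping for the product-marginal structure and the $k$-way combinatorics of Lemma \ref{le:var}.

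For part (i), I would start from the Fourier identity
$$\gamma^2_{\nu_n}(\PP, \PP^{X^1}\otimes\cdots\otimes\PP^{X^k}) = (\pi/\nu_n)^{d/2}\int e^{-\|\omega\|^2/(4\nu_n)}\bigl|\Fcal(p - p_1\otimes\cdots\otimes p_k)(\omega)\bigr|^2 d\omega$$
and lower-bound the signal via a low/high frequency split at $\|\omega\|^2 \asymp R \asymp \Delta_n^{-2/s}$. On the low-frequency region $e^{-\|\omega\|^2/(4\nu_n)} \geq 1/2$ whenever $\nu_n \gtrsim R$, while the Sobolev constraint $\|p - p_1\otimes\cdots\otimes p_k\|_{\Wcal^{s,2}} \leq 2M$ forces the high-frequency $L_2$ tail to be at most $(1+R)^{-s}(2M)^2 \leq \Delta_n^2/2$. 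Since $\nu_n \asymp n^{4/(d+4s)}$ satisfies $\nu_n \gtrsim R$ whenever $\Delta_n \gg n^{-2s/(d+4s)}$, one obtains $\gamma^2_{\nu_n} \gtrsim (\pi/\nu_n)^{d/2}\Delta_n^2$. The variance is $O(\nu_n^{-d/2}/n^2)$ under $H_0^{\rm IND}$ by Lemma \ref{le:var} and $\EE G_{2\nu_n}(X,Y) \asymp \nu_n^{-d/2}$; under $H_1^{\rm IND}$ an extra term of order $\gamma^2_{\nu_n}/n$ appears but is dominated by the signal squared. Consistency of $\hat s^2_{n,\nu_n}$ follows from direct moment computations analogous to those behind Theorem \ref{th:gofnull}, and Chebyshev's inequality then yields $T_{n,\nu_n}^{\rm IND} \to_p \infty$, hence power tending to 1.

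For part (ii), I would apply Ingster's chi-squared mixture method. Fix a smooth product density $p_0 = p_1^\ast \otimes \cdots \otimes p_k^\ast$ strictly in the interior of the Sobolev ball. Construct the family of alternatives
$$p_\epsilon = p_0 + \rho_n \sum_\alpha \epsilon_\alpha \psi_\alpha,\qquad \psi_\alpha(x) = \prod_{j=1}^k u^{(j)}_{\alpha_j}(x^j),$$
where $\{\epsilon_\alpha\}$ are i.i.d.\ Rademacher signs indexed by a grid of spacing $h_n \asymp n^{-2/(d+4s)}$, each $u^{(j)}_{\alpha_j}$ is a smooth, mean-zero, compactly supported bump translated and rescaled to width $h_n$ in $\RR^{d_j}$, and $\rho_n$ is calibrated so that $\|p_\epsilon - p_0\|_{L_2} \asymp \Delta_n$. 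The tensor structure forces every univariate marginal of $\psi_\alpha$ to vanish, so each $p_\epsilon$ has marginals exactly $p_j^\ast$ and lies on the independence null along every coordinate direction. Sobolev-ball containment $\|p_\epsilon - p_0\|_{\Wcal^{s,2}}^2 \lesssim \rho_n^2 h_n^{-2s} = O(1)$ is ensured by $h_n \asymp \Delta_n^{1/s}$. A standard chi-square computation based on disjoint supports of the $\psi_\alpha$ gives
$$\chi^2\bigl(\EE_\epsilon \PP_\epsilon^{\otimes n}\,\|\,\PP_0^{\otimes n}\bigr) \leq \exp(C n^2 \rho_n^4 h_n^d) - 1,$$
which remains bounded precisely when $\Delta_n = O(n^{-2s/(d+4s)})$, so by Le Cam's two-point lemma no level-$\alpha$ test can attain power tending to 1.

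The main technical obstacle is the lower-bound construction: the tensor form of the bumps handles the zero-marginal constraint for free, but one still has to verify that each $p_\epsilon$ is a genuine density. Nonnegativity follows because $p_0$ is bounded below on a neighborhood of the bump supports and $\rho_n \|\psi_\alpha\|_\infty = o(1)$; the Sobolev-ball check is a direct computation exploiting the $h_n$-scaling of the $u^{(j)}_{\alpha_j}$. All remaining ingredients — the bias-control calculation on the signal side, the consistency of the variance estimator, and the chi-square moment expansion — carry over from the proofs of Theorems \ref{th:gofpower}, \ref{th:goflower}, and \ref{th:hompower}, with the index $k$ only adding combinatorial bookkeeping to the $U$-statistic decomposition.
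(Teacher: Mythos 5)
Your overall plan matches the paper's in both parts: Lemma \ref{le:gaussmmd} (which is the Fourier low/high-frequency split you describe) to lower-bound $\gamma^2_{\nu_n}$, the Hoeffding decomposition plus Lemma \ref{le:var} for the variance, and Ingster's chi-squared mixture with tensor-structured bumps for the converse. Part (ii) is essentially the paper's argument: the paper reduces to the goodness-of-fit lower bound with fixed marginals and then notes that the product bump $\phi=\phi_1\otimes\phi_2$ has vanishing marginals automatically, whereas you construct the prior directly inside $H_1^{\rm IND}$ and verify the marginal condition by the same tensor structure. Same computation, different packaging.

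Part (i), however, has a real gap. You claim the extra variance term under $H_1^{\rm IND}$ is ``of order $\gamma_{\nu_n}^2/n$'' and that it ``is dominated by the signal squared,'' i.e.\ by $\gamma_{\nu_n}^4$. That would require $n\gamma_{\nu_n}^2\to\infty$. But $\gamma_{\nu_n}^2\asymp\nu_n^{-d/2}\|p-p_1\otimes\cdots\otimes p_k\|_{L_2}^2$, so at the detection boundary $\Delta_n=\epsilon_n n^{-2s/(d+4s)}$ with $\epsilon_n\to\infty$ arbitrarily slowly and $\nu_n\asymp n^{4/(d+4s)}$, one gets $n\gamma_{\nu_n}^2\asymp\epsilon_n^2\,n^{-d/(d+4s)}\to 0$. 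The ratio $(\gamma^2/n)/\gamma^4=1/(n\gamma^2)$ therefore diverges and Chebyshev does not close. What the paper actually proves, and what is needed, is the sharper first-order bound
$$
\EE D_1^2(\nu_n)\ \lesssim_d\ M\,n^{-1}\nu_n^{-3d/4}\,\|p-p_1\otimes\cdots\otimes p_k\|_{L_2}^2,
$$
carrying an extra factor $\nu_n^{-d/4}$ relative to $\gamma_{\nu_n}^2/n$. That factor arises from bounding the Gaussian-smoothed density $\int G_{2\nu_n}(x,\cdot)\,p(x)\,dx\lesssim_d M\nu_n^{-d/4}$ inside the conditional-variance integral, exactly as in the proof of Theorem \ref{th:gofpower}. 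It is precisely what makes the ratio $\EE D_1^2/\gamma^4\lesssim M\nu_n^{d/4}/(n\Delta_n^2)\to 0$ under the stated scaling, so it cannot be elided as ``bookkeeping.''
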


\section{Adaptation}
\label{sec:adapt}

The results presented in the previous sections not only suggest that Gaussian kernel embedding of distributions is especially suitable for testing against smooth alternatives, but also indicate the importance of choosing an appropriate scaling parameter in order to detect small deviation from the null hypothesis. To achieve maximum power, the scaling parameter should be chosen according to the smoothness of underlying density functions. This, however, presents a practical challenge because the level of smoothness is rarely known a priori. This naturally brings about the questions of adaption: can we devise an agnostic testing procedure that does not require such knowledge but still attain similar performance? We shall show in this section that this is possible, at least for sufficiently smooth densities.

\subsection{Test for Goodness-of-fit}
We again begin with the test for goodness-of-fit. As we show in Section \ref{sec:gof}, under $H_0^{\rm GOF}$, $T_{n,\nu_n}^{\rm GOF}\to_d N(0,1)$ if $1\ll\nu_n\ll n^{4/d}$; whereas for any $p\in \Wcal^{s,2}$ such that $\|p-p_0\|_{L_2}\gg n^{-2s/(d+4s)}$, $T_{n,\nu_n}^{\rm GOF}\to\infty$ provided that $\nu_n\asymp n^{4/(d+4s)}$. This motivates us to consider the following test statistic:
$$
T_n^{\rm GOF (adapt)}=\max_{1\le \nu_n\le n^{2/d}} T_{n,\nu_n}^{\rm GOF}.
$$
In light of earlier discussion, it is plausible that such a statistic could be used to detect any smooth departure from the null provided that the level of smoothness $s\ge d/4$. We now argue that this is indeed the case. More specifically, we shall proceed to reject $H_0^{\rm GOF}$ if and only if $T_n^{\rm GOF (adapt)}$ exceeds the upper $\alpha$ quantile, denoted by $q_{n,\alpha}^{\rm GOF}$, of its null distribution. In what follows, we shall call this test $\Phi^{\rm GOF (adapt)}$. Note that, even though it is hard to derive the analytic form for $q_{n,\alpha}^{\rm GOF}$, it can be readily evaluated via Monte Carlo method.

To study the power of $\Phi^{\rm GOF (adapt)}$ against $H_1^{\rm GOF}$ with different levels of smoothness, we shall consider the following alternative hypothesis
$$
H_1^{\rm GOF(adapt)}(\Delta_{n,s}: s\ge d/4): p\in \bigcup_{s\ge d/4} \{p\in \Wcal^{s,2}(M): \|p-p_0\|_{L_2}\ge \Delta_{n,s}\}.
$$
The following theorem characterizes the power of $\Phi^{\rm GOF (adapt)}$ against $H_1^{\rm GOF(adapt)}(\Delta_{n,s}: s\ge d/4)$.

\begin{theorem}
\label{th:gofadapt}
There exists a constant $c>0$ such that if
$$\liminf_{n\to\infty} \Delta_{n,s}(n/\log\log n)^{2s/(d+4s)}>c,$$
then
$$
{\rm power}\{\Phi^{\rm GOF (adapt)}; H_1^{\rm GOF(adapt)}(\Delta_{n,s}: s\ge d/4)\}\to 1.
$$	
\end{theorem}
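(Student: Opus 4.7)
The plan is to combine (i) a tail bound for the null distribution showing $q_{n,\alpha}^{\rm GOF}=O(\sqrt{\log\log n})$ with (ii) a signal-strength calculation under the alternative showing $T_n^{\rm GOF(adapt)}\gg\sqrt{\log\log n}$, so that the statistic eventually exceeds its null quantile with probability tending to one.

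\emph{Bounding the null quantile.} I would discretize the scaling range $[1,n^{2/d}]$ geometrically by $\nu_j=2^j$ for $j=0,1,\ldots,J$ with $J=\lceil(2/d)\log_2 n\rceil=O(\log n)$. Sharpening the proof of Theorem \ref{th:gofnull} (via moment bounds or Gaussian coupling for degenerate $U$-statistics) to obtain a non-asymptotic tail $\PP(T_{n,\nu_j}^{\rm GOF}>t)\lesssim e^{-t^2/2}$ uniformly in $j$, a union bound yields
$$\max_{0\le j\le J}T_{n,\nu_j}^{\rm GOF}\le (1+o(1))\sqrt{2\log J}\asymp\sqrt{\log\log n}$$
with probability $1-\alpha/2+o(1)$. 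Since $\partial_\nu G_\nu(x,y)=-\|x-y\|^2G_\nu(x,y)$, a standard modulus-of-continuity argument across each dyadic interval $[\nu_j,\nu_{j+1}]$---controlling the derivative $U$-statistic driving $\hat\gamma_\nu^2$ and invoking the uniform consistency $(2\nu/\pi)^{d/2}\hat s_{n,\nu}^2\to_p \|p_0\|_{L_2}^2$---then passes from the grid to the full interval, giving $q_{n,\alpha}^{\rm GOF}\le C\sqrt{\log\log n}$ eventually.

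\emph{Power analysis.} Fix $s\ge d/4$ and $p\in\Wcal^{s,2}(M)$ with $\|p-p_0\|_{L_2}\ge\Delta_{n,s}$. I would choose the pilot scaling $\nu_n^*\asymp(n/\log\log n)^{4/(d+4s)}$; the constraint $s\ge d/4$ ensures $\nu_n^*\le n^{2/d}$ eventually, so that $\nu_n^*$ (or the nearest grid point) lies in the search range. The Fourier-side lower bound
$$(\nu_n^*/\pi)^{d/2}\gamma_{\nu_n^*}^2(\PP,\PP_0)=\int|\Fcal(p-p_0)(\omega)|^2e^{-\|\omega\|^2/(4\nu_n^*)}d\omega\ge \Delta_{n,s}^2-CM^2(\nu_n^*)^{-s},$$
obtained by truncating at $\|\omega\|^2\asymp\nu_n^*$ and using the Sobolev norm on the tail, combined with $(2\nu_n^*/\pi)^{d/2}\hat s_{n,\nu_n^*}^2\to_p\|p\|_{L_2}^2$ as in the proof of Theorem \ref{th:gofpower}, yields
$$T_{n,\nu_n^*}^{\rm GOF}\gtrsim n(\nu_n^*)^{-d/4}\bigl(\Delta_{n,s}^2-C(\nu_n^*)^{-s}\bigr)\asymp c^2\log\log n$$
once $c$ in the hypothesis is large enough, which dominates $C\sqrt{\log\log n}$ so that $T_n^{\rm GOF(adapt)}\ge T_{n,\nu_n^*}^{\rm GOF}>q_{n,\alpha}^{\rm GOF}$ with probability approaching one.

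\emph{Main obstacle.} The principal technical hurdle is the uniform null control: pointwise asymptotic normality from Theorem \ref{th:gofnull} does not suffice, and we require a non-asymptotic tail bound for the family of degenerate $U$-statistics underlying $T_{n,\nu}^{\rm GOF}$ that is uniform over the logarithmic grid, together with a quantitative modulus-of-continuity bound for $\nu\mapsto T_{n,\nu}^{\rm GOF}$ that handles the random denominator $\hat s_{n,\nu}$. Preserving the sharp $\sqrt{\log\log n}$ constant (rather than settling for $\sqrt{\log n}$) is what rules the level of precision required; once that is in place, the power calculation follows by inserting the grid-respecting choice of $\nu_n^*$ into the non-adaptive analysis of Theorem \ref{th:gofpower}.
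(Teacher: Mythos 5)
Your two-step architecture --- bound the null quantile $q_{n,\alpha}^{\rm GOF}$, then show the signal dominates at the pilot scale $\nu_n^*\asymp(n/\log\log n)^{4/(d+4s)}$ --- is exactly the paper's strategy, and your power computation (Fourier truncation via Lemma \ref{le:gaussmmd}, yielding $T_{n,\nu_n^*}^{\rm GOF}\gtrsim c^2\log\log n$) is correct and coincides with the paper's Step 2.

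The gap is in your null-quantile bound, where you claim $q_{n,\alpha}^{\rm GOF}=O(\sqrt{\log\log n})$ from a sub-Gaussian tail $\PP(T_{n,\nu_j}^{\rm GOF}>t)\lesssim e^{-t^2/2}$ plus a union bound over a geometric grid of size $J\asymp\log n$. The normalized degenerate $U$-statistic does not admit such a tail at this scale: the available non-asymptotic bounds (Arcones--Gin\'e Proposition~2.3(c), which the paper invokes, or Gin\'e--Lata\l a--Zinn moment inequalities) give a sub-\emph{exponential} tail $\exp(-ct)$ in the leading regime, because the self-normalized second-order chaos has exponential, not Gaussian, tails once $t$ exceeds a constant. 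The union bound over $J\asymp\log n$ points therefore yields a threshold $\asymp\log J=\log\log n$, not $\sqrt{\log J}$. This is precisely what the paper proves in Lemma \ref{at4}: $q_{n,\alpha}^{\rm GOF}\le C(d)\log\log n$. Your final conclusion survives because the power you compute is $\asymp c^2\log\log n$, which dominates $C(d)\log\log n$ once $c$ is large enough --- but your written comparison ``$c^2\log\log n$ dominates $C\sqrt{\log\log n}$'' understates what actually needs to be beaten, and is exactly why the theorem requires the threshold constant $c$ to be sufficiently large rather than arbitrary.

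A second, related issue is your passage from the grid to the continuum by a ``standard modulus-of-continuity argument.'' The paper computes the relevant $L_2$-distance $d_2(\nu,\nu')\lesssim\sqrt{|\log\nu-\log\nu'|}$, so over a dyadic interval $[\nu_j,2\nu_j]$ the $L_2$-modulus is $O(1)$, not vanishing; the within-interval fluctuation of the $U$-process is $O_p(1)$ per interval, and controlling the maximum of $\asymp\log n$ such $O_p(1)$ fluctuations again requires a concentration inequality, not a Lipschitz bound on the derivative kernel $-\|x-y\|^2G_\nu$. The paper resolves this not by discretize-then-interpolate but by generic chaining (Talagrand's $\gamma_{2/3}$ and $\gamma_1$ functionals applied to the sub-exponential Arcones--Gin\'e tail), together with the uniform variance consistency of Lemma \ref{consistent-est-unif} to absorb the random denominator $\hat s_{n,\nu}$. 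If you want to pursue a discretization route instead, you would need a multiscale refinement (essentially re-deriving chaining), and you would still land on a $\log\log n$ rather than $\sqrt{\log\log n}$ null threshold.
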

Theorem \ref{th:gofadapt} shows that $\Phi^{\rm GOF (adapt)}$ has a detection boundary of the order $(\log\log n/n)^{\frac{2s}{d+4s}}$ when $p\in \Wcal^{s,2}$ for any $s\ge d/4$. If $s$ is known in advance, as we show in Section \ref{sec:gof}, the optimal test is based on $T_{n,\nu_n}^{\rm GOF}$ with $\nu_n\asymp n^{4/(d+4s)}$ and has a detection boundary of the order $O(n^{-2s/(d+4s)})$. The extra polynomial of iterated logarithmic factor $(\log\log n)^{2s/(d+4s)}$ is the price we pay to ensure that no knowledge of $s$ is required and $\Phi^{\rm GOF (adapt)}$ is powerful against smooth alternatives for all $s\ge d/4$.

\subsection{Test for Homogeneity}
The treatment for homogeneity tests is similar. Instead of $T_{n,\nu_n}^{\rm HOM}$, we now consider a test based on
$$
T_n^{\rm HOM (adapt)}=\max_{1\le \nu_n\le n^{2/d}} T_{n,\nu_n}^{\rm HOM}.
$$
If $T_n^{\rm HOM (adapt)}$ exceeds the upper $\alpha$ quantile, denoted by $q_{n,\alpha}^{\rm HOM}$, of its null distribution, then we reject $H_0^{\rm HOM}$. In what follows, we shall refer to this test as $\Phi^{\rm HOM (adapt)}$. As before, we do not have a closed form expression for $q_{n,\alpha}^{\rm HOM}$, and it needs to be evaluated via Monte Carlo method. In particular, in the case of homogeneity test, we can approximate $q_{n,\alpha}^{\rm HOM}$ by permutation where we randomly shuffle $\{X_1,\ldots,X_n, Y_1,\ldots,Y_m\}$ and compute the test statistic as if the first $n$ shuffled observations are from the first population whereas the other $m$ are from the second population. This is repeated multiple times in order to approximate the critical value $q_{n,\alpha}^{\rm HOM}$.

The following theorem characterize the power of $\Phi^{\rm HOM (adapt)}$ against an alternative with different levels of smoothness
$$
H_1^{\rm HOM(adapt)}(\Delta_{n,s}: s\ge d/4): (p,q)\in \bigcup_{s\ge d/4} \{(p,q): p,q\in \Wcal^{s,2}(M), \|p-q\|_{L_2}\ge \Delta_{n,s}\}.
$$

\begin{theorem}
\label{th:homadapt}
There exists a constant $c>0$ such that if
$$\liminf_{n\to\infty} \Delta_{n,s}(n/\log\log n)^{2s/(d+4s)}>c,$$
then
$$
{\rm power}\{\Phi^{\rm HOM (adapt)}; H_1^{\rm HOM(adapt)}(\Delta_{n,s}: s\ge d/4)\}\to 1.
$$
\end{theorem}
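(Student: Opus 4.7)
The plan is to reduce Theorem \ref{th:homadapt} to a Bonferroni-style comparison between the null critical value and the oracle-optimal choice of scaling parameter from Theorem \ref{th:hompower}. First, I would discretize the scaling parameter range. Define the geometric grid $\Gcal_n = \{2^j : 0 \le j \le J_n\}$ with $J_n = \lfloor (2/d)\log_2 n\rfloor$, so $|\Gcal_n| \asymp \log n$. A Lipschitz estimate in $\nu$, obtained by differentiating $G_{\nu}(x,y) = \exp(-\nu\|x-y\|^2)$ and bounding the resulting $U$-statistic perturbation on each dyadic interval, shows that $T_n^{\rm HOM(adapt)}$ differs from $\max_{\nu\in\Gcal_n} T_{n,\nu}^{\rm HOM}$ by at most a universal multiplicative constant with probability tending to one. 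Thus it suffices to analyze the max over $\Gcal_n$.

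For the null side, I would establish $q_{n,\alpha}^{\rm HOM} = O(\sqrt{\log\log n})$. The essential input is a sub-Gaussian tail inequality of the form $\PP_0(T_{n,\nu}^{\rm HOM} > t) \le e^{-c t^2}$ holding uniformly in $\nu \in \Gcal_n$ and in $t \le C\sqrt{\log n}$. This can be obtained by applying a Hoeffding/Giné--Latała--Zinn exponential inequality for degenerate $U$-statistics to the numerator $\hat{\gamma}^2_{\nu}(\PP,\QQ)$, combined with a high-probability lower bound of the form $\hat{s}^2_{n,m,\nu} \gtrsim \EE \bar{G}^2_{\nu}(X,Y)$ for the studentizer (exploiting the $U$-statistic representation of $\tilde s^2_{n,m,\nu}$ given in Section \ref{sec:hom}). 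A union bound over the $|\Gcal_n| \asymp \log n$ grid points then gives
$$\PP_0\Bigl(\max_{\nu\in\Gcal_n}T_{n,\nu}^{\rm HOM} > C\sqrt{\log\log n}\Bigr) \le |\Gcal_n|\cdot e^{-c C^2 \log\log n} \le \alpha$$
for $C$ large enough, hence the claimed bound on $q_{n,\alpha}^{\rm HOM}$.

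For the alternative side, fix $(p,q)\in H_1^{\rm HOM(adapt)}(\Delta_{n,s}:s\ge d/4)$, let $s\ge d/4$ be such that $\|p-q\|_{L_2}\ge\Delta_{n,s}$, and choose the oracle scale $\nu_n^* \asymp \Delta_{n,s}^{-2/s}$, which balances the smoothing bias $O(\nu^{-s})$ against the signal and lies in $[1,n^{2/d}]$ precisely because $s \ge d/4$ together with the assumed lower bound on $\Delta_{n,s}$. Let $\tilde\nu_n\in\Gcal_n$ lie within a factor of two of $\nu_n^*$. Following the mean and variance computations from the proof of Theorem \ref{th:hompower}, the expectation of $T_{n,\tilde\nu_n}^{\rm HOM}$ under $H_1^{\rm HOM}$ is of order $n\,\tilde\nu_n^{-d/4}\Delta_{n,s}^2 \asymp n\,\Delta_{n,s}^{(d+4s)/(2s)}$, while its variance is $O(1)$. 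Substituting $\Delta_{n,s} \ge c(\log\log n/n)^{2s/(d+4s)}$ yields expectation of order $c^{(d+4s)/(2s)}\log\log n$, which for $c$ chosen sufficiently large dominates $C\sqrt{\log\log n}$. Chebyshev's inequality then gives $T_{n,\tilde\nu_n}^{\rm HOM} > q_{n,\alpha}^{\rm HOM}$ with probability tending to one, and since $T_n^{\rm HOM(adapt)} \ge T_{n,\tilde\nu_n}^{\rm HOM}$ up to the earlier Lipschitz correction, the power converges to one.

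The main obstacle I anticipate is the uniform exponential tail bound underpinning the null step: Theorem \ref{th:homnull} only supplies marginal Gaussian convergence, whereas the Bonferroni argument needs $e^{-ct^2}$ tails for $t$ as large as $\sqrt{\log\log n}$, uniformly over $\nu\in\Gcal_n$. Since the Gaussian kernel $G_{\nu}$ has bandwidth diverging with $n$, care is needed to control moments of the associated degenerate kernel $\bar G_\nu$; I expect this to follow from a truncation of $\bar G_\nu$ combined with standard exponential inequalities for degenerate $U$-statistics, together with a separate one-sided concentration bound on $\hat s^2_{n,m,\nu}$ to prevent the denominator from being atypically small.
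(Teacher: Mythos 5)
Your overall reduction — bound the null critical value, then show the oracle scale beats it — matches the paper's structure, and your alternative-side analysis (oracle $\nu_n^*\asymp\Delta_{n,s}^{-2/s}$, the mean/variance computation, Chebyshev) is essentially what the paper does. However, the null-side argument contains a genuine error that you yourself flag as the main obstacle: the claimed sub-Gaussian tail $\PP_0(T_{n,\nu}^{\rm HOM}>t)\le e^{-ct^2}$ is false. The normalized degenerate $U$-statistic $\hat{\gamma_\nu^2}(\PP,\QQ)/\sqrt{\var}$ does not have Gaussian tails; it is sub-exponential at best. The correct deviation inequality is the Arcones--Gin\'e bound the paper invokes (its Proposition 2.3(c)), which gives
$$\PP\left(\left|\tfrac{1}{n-1}\textstyle\sum_{i\ne j}\bar K_\nu(X_i,X_j)\right|\ge t\right)\lesssim \exp\!\left(-C\min\left\{\frac{t}{\|\bar K_\nu\|_{L_2}},\left(\frac{\sqrt n\,t}{\|\bar K_\nu\|_{L_\infty}}\right)^{2/3}\right\}\right),$$
so in the moderate-deviation regime the normalized statistic has tails $e^{-ct}$, not $e^{-ct^2}$. (The CLT gives a normal \emph{limit} because the effective degrees of freedom grow with $\nu_n$, but the finite-sample tails are chi-squared-like, and $\nu$ ranges all the way down to $1$ in the adaptive supremum, so there is no uniform sub-Gaussianity to exploit.) Self-normalization does not rescue this, since the denominator does not grow on the large-deviation events of the numerator. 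Consequently your Bonferroni computation yields $q_{n,\alpha}^{\rm HOM}=O(\log\log n)$, not $O(\sqrt{\log\log n})$. Fortuitously this does not change the final detection boundary, because your alternative-side lower bound on the signal is $c^{(d+4s)/(2s)}\log\log n\ge c^2\log\log n$, which still dominates $C\log\log n$ for $c$ large.

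A second, smaller, issue is the Lipschitz discretization step. Differentiating $G_\nu$ in $\nu$ gives $\|G_\nu-G_{2\nu}\|_{L_\infty}=O(1)$ on adjacent dyadic points, and more importantly the relevant $L_2$-increment between normalized kernels satisfies $\|\bar K_\nu-\bar K_{\nu'}\|_{L_2}^2\lesssim|\log\nu-\log\nu'|$, which is also $\Theta(1)$ across a dyadic interval. So the fluctuation inside each dyadic cell is not small, and your claim that the grid max controls the continuous sup up to a multiplicative constant does not follow from a crude Lipschitz bound alone. The paper avoids this by running a full generic-chaining argument (Lemma~\ref{at4}) with the metrics $d_1=\|\cdot\|_{L_\infty}$ and $d_2=\|\cdot\|_{L_2}$, bounding the Talagrand functionals $\gamma_{2/3}$ and $\gamma_1$ and the entropy sum $D_2$; the $\log\log n$ in the critical value arises from $\gamma_1([1,n^{2/d}],d_2)\lesssim\log\log n$, driven by the $\sqrt{|\log\nu-\log\nu'|}$ metric geometry. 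Your discretization idea can be salvaged if carried out at the correct scale (a finer-than-dyadic net, with a separate increment bound), but as written it hides exactly the work the chaining argument does. It would also help to make explicit, as the paper does via the uniform version of the variance-estimator consistency (Lemma~\ref{consistent-est-unif}), that $\hat s^2_{n,m,\nu}/\EE\bar G_\nu^2\to 1$ uniformly over $\nu\in[1,n^{2/d}]$, rather than deferring the studentizer control.
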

Similar to the case of goodness-of-fit test, Theorem \ref{th:homadapt} shows that $\Phi^{\rm HOM (adapt)}$ has a detection boundary of the order $O((n/\log\log n)^{-2s/(d+4s)})$ when $p\neq q\in \Wcal^{s,2}$ for any $s\ge d/4$. In light of the results from Section \ref{sec:hom}, this is optimal up to an extra polynomial of iterated logarithmic factor. The main advantage is that $\Phi^{\rm HOM (adapt)}$ is powerful against smooth alternatives simultaneously for all $s\ge d/4$.

\subsection{Test for Independence}
Similarly, for independence test, we shall adopt the following test statistic
$$
T_n^{\rm IND (adapt)}=\max_{1\le \nu_n\le n^{2/d}} T_{n,\nu_n}^{\rm IND}.
$$
and reject $H_0^{\rm IND}$ if and only $T_n^{\rm IND (adapt)}$ exceeds the upper $\alpha$ quantile, denoted by $q_{n,\alpha}^{\rm IND}$, of its null distribution. In what follows, we shall refer to this test as $\Phi^{\rm HOM (adapt)}$. The critical value, $q_{n,\alpha}^{\rm HOM}$, can also be evaluated via permutation test. See, e.g., \cite{pfister2018kernel} for detailed discussions.

We now show that $\Phi^{\rm IND (adapt)}$ is powerful in testing against the alternative with different levels of smoothness
\begin{align*}
H_1^{\rm IND(adapt)}(\Delta_{n,s}: s\ge d/4): p\in \bigcup_{s\ge d/4} \Big\{p\in \Wcal^{s,2}(M), p_j\in\Wcal^{s,2}(M_j),1\leq j\leq k,\\ \|p-p_1\otimes\cdots\otimes p_k\|_{L_2}\ge \Delta_{n,s}\Big\}.
\end{align*}
More specifically, we have

\begin{theorem}
\label{th:indadapt}
There exists a constant $c>0$ such that if
$$\liminf_{n\to\infty} \Delta_{n,s}(n/\log\log n)^{2s/(d+4s)}>c,$$
then
$$
{\rm power}\{\Phi^{\rm IND (adapt)}; H_1^{\rm IND(adapt)}(\Delta_{n,s}: s\ge d/4)\}\to 1.
$$
\end{theorem}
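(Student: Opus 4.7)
The plan is to adapt the strategy used for Theorems \ref{th:gofadapt} and \ref{th:homadapt}, splitting the argument into two parts: an upper bound on the critical value $q_{n,\alpha}^{\rm IND}$ under $H_0^{\rm IND}$, and a lower bound on $T_n^{\rm IND(adapt)}$ evaluated at a near-optimal scaling parameter under $H_1^{\rm IND(adapt)}$. The extra iterated logarithmic factor in the detection boundary will emerge from a union bound over a dyadic discretization of the admissible range of $\nu_n$.

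First, I would discretize $[1,n^{2/d}]$ by a dyadic grid $\nu_n^{(j)}=2^j$, $0\le j\le J_n$, with $J_n\asymp \log n$, and reduce $T_n^{\rm IND(adapt)}$ to the maximum over this grid up to a negligible error. The key technical input is an exponential-type concentration bound for $T_{n,\nu_n^{(j)}}^{\rm IND}$ under $H_0^{\rm IND}$, uniform in $j$, together with a Lipschitz-type continuity estimate for $\nu\mapsto T_{n,\nu}^{\rm IND}$ on each dyadic block (using that $\partial_\nu G_\nu$ is uniformly bounded up to polynomial factors in $\nu$). Given these, a union bound over $O(\log n)$ grid points delivers
\[
q_{n,\alpha}^{\rm IND}\le C\sqrt{\log\log n}\quad\text{for large }n.
\]
The concentration follows from Arcones--Giné type maximal inequalities for degenerate $U$-statistics applied to the numerator $\hat{\gamma^2_{\nu_n}}$, combined with uniform consistency of the suitably normalized $\hat{s}_{n,\nu_n}^{\,2}$ around its population counterpart computed in Lemma \ref{le:var}.

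For the power argument, fix any $s\ge d/4$ and choose $j^\ast$ so that $\nu_n^{(j^\ast)}\asymp n^{4/(d+4s)}$; this $\nu_n^{(j^\ast)}$ lies in our grid precisely because $4/(d+4s)\le 2/d$ is equivalent to $s\ge d/4$. Following the argument underlying Theorem \ref{th:indpower}, I would show that at this grid point $T_{n,\nu_n^{(j^\ast)}}^{\rm IND}$ is concentrated around a quantity of order $n^{2s/(d+4s)}\Delta_{n,s}$. The hypothesis $\liminf_n \Delta_{n,s}(n/\log\log n)^{2s/(d+4s)}>c$ with $c$ chosen large enough then forces
\[
T_n^{\rm IND(adapt)}\ \ge\ T_{n,\nu_n^{(j^\ast)}}^{\rm IND}\ \gg\ \sqrt{\log\log n}\ \ge\ q_{n,\alpha}^{\rm IND},
\]
with probability tending to one, as required.

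The hardest step will be the uniform-in-$\nu_n$ concentration in (i). Unlike goodness-of-fit, the dHSIC estimator is a $U$-statistic of order $2k$ whose Hoeffding decomposition involves several coupling terms between the joint and the product distributions, namely the $g_j$ and $g_{j_1,j_2}$ in Lemma \ref{le:var}, each scaling differently in $\nu_n$. Propagating exponential bounds through this decomposition, and verifying that the truncation $\hat{s}_{n,\nu_n}^{\,2}=\max\{\tilde{s}_{n,\nu_n}^{\,2},1/n^2\}$ is inactive simultaneously at every grid point with high probability, are the main hurdles. These difficulties are, however, of the same nature as those already resolved for $\Phi^{\rm HOM(adapt)}$ in Theorem \ref{th:homadapt}, and should go through after carefully tracking the $\nu_n$-dependence of each cross term and exploiting the product structure of the null distribution under $H_0^{\rm IND}$.
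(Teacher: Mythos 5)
Your high-level architecture is right (bound the null quantile, then show the statistic at a near-optimal scaling parameter beats it), and the two hardest hurdles you identify — propagating tail bounds through the dHSIC decomposition and keeping the truncation $\hat{s}_{n,\nu_n}^2=\max\{\tilde{s}_{n,\nu_n}^2,1/n^2\}$ inactive uniformly — are indeed where the work lives, and they match what the paper does. But there are two errors that together break the argument.

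\emph{First, the critical value is $O(\log\log n)$, not $O(\sqrt{\log\log n})$.} Your claim $q_{n,\alpha}^{\rm IND}\lesssim\sqrt{\log\log n}$ tacitly assumes a sub-Gaussian tail for the normalized degenerate $U$-statistic, so that a union bound over $O(\log n)$ grid points gives a threshold of order $\sqrt{\log(\#\text{grid points})}$. However, the leading term $D_2(\nu_n)$ is a degenerate second-order $U$-statistic, whose tail is sub-exponential, $P(|T_{n,\nu}^{\rm IND}|>t)\lesssim\exp(-c\,t)$ in the relevant regime (plus a $(2/3)$-power correction from the Bernstein part), as in the Arcones--Gin\'e bound the paper invokes. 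Chaining or a dyadic union bound under an $\exp(-ct)$ tail yields a threshold of order $\log(\text{entropy})\asymp\log\log n$, not its square root. This is exactly what the paper proves via $\gamma_1$-functionals (Lemma for $\widetilde{T}_n^{\rm GOF(adapt)}$), and the same bound carries over to $D_2,D_3,D_4$ separately for the dHSIC decomposition.

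\emph{Second, the scaling parameter needs a $\log\log n$ inflation.} You choose $\nu_n^{(j^\ast)}\asymp n^{4/(d+4s)}$ and claim the statistic concentrates around $n^{2s/(d+4s)}\Delta_{n,s}$. The correct order of the normalized statistic at that $\nu$ is quadratic in the signal: $T_{n,\nu}^{\rm IND}\asymp n\,\nu^{-d/4}\Delta_{n,s}^2\asymp n^{4s/(d+4s)}\Delta_{n,s}^2$. Plugging in $\Delta_{n,s}\asymp c(\log\log n/n)^{2s/(d+4s)}$ gives $T\asymp c^2(\log\log n)^{4s/(d+4s)}$, which is $o(\log\log n)$ for every finite $s$. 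So with the (correct) $C\log\log n$ threshold, your choice of scaling parameter fails to produce a consistent test. The paper instead evaluates at
\[
\nu_n(s)'=\left(\frac{n}{\log\log n}\right)^{4/(d+4s)},
\]
i.e.\ a scaling parameter inflated by $(\log\log n)^{4/(d+4s)}$, which yields $T\asymp c^2\log\log n$ and therefore beats $C\log\log n$ once $c$ is large. The two errors you make happen to conspire — with a $\sqrt{\log\log n}$ threshold and the unadjusted $\nu_n$, the inequality $(\log\log n)^{4s/(d+4s)}\gg\sqrt{\log\log n}$ does hold for $s>d/4$ — but this internal consistency is misleading, since the threshold bound is wrong.

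A smaller slip: the statistic at the chosen grid point is $(n^{2s/(d+4s)}\Delta_{n,s})^2$, not $n^{2s/(d+4s)}\Delta_{n,s}$. The quadratic dependence on $\Delta_{n,s}$ matters when you trade rates against the $\log\log n$ threshold.

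If you (i) replace $\sqrt{\log\log n}$ by $\log\log n$ throughout the null-side argument, (ii) use the inflated $\nu_n(s)'$ in the power argument, and (iii) correct the rate to the quadratic one, your proposal aligns with the paper's proof. Beyond that, the paper handles the dHSIC case by pushing the $H_0^{\rm IND}$ product structure through the Hoeffding decomposition $\hat{\gamma_{\nu_n}^2}=D_2(\nu_n)+D_3(\nu_n)+D_4(\nu_n)$ and applying the Lemma~\ref{at4}-type chaining bound to each $D_l$ separately, with tail exponents $t^{2/3}$, $t^{1/2}$, $t^{2/5}$ respectively — worth being explicit about if you want your uniform-in-$\nu$ bound to close cleanly.
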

Similar to before, Theorem \ref{th:indadapt} shows that $\Phi^{\rm IND (adapt)}$ is optimal up to an extra polynomial of iterated logarithmic factor for detecting smooth departure from independence simultaneously for all $s\ge d/4$.

\section{Numerical Experiments}
\label{sec:sim}
To further complement our theoretical development and demonstrate the practical merits of the proposed methodology, we conducted several sets of numerical experiments.

\subsection{Effect of Scaling Parameter}
Our first set of experiments were designed to illustrate the importance of the scaling parameter and highlight the potential room for improvement over the ``median'' heuristic -- one of the most common data-driven choice of the scaling parameter in practice \citep[see, \eg,][]{gretton2008kernel,pfister2018kernel}.

\begin{itemize}
	\item \textit{Experiment \Ro}: the homogeneity test with underlying distributions being the normal distribution and the mixture of several normal distributions. Specifically, $$
	p(x)=f(x;0,1),\quad q(x)=0.5\times f(x;0,1)+0.1\times\sum_{\mu\in\bm{\mu}}f(x;\mu,0.05)
	$$
	where $f(x;\mu,\sigma)$ denotes the density of $N(\mu,\sigma^2)$ and $\bm{\mu}=\{-1,-0.5,0,0.5,1\}$.
	\item \textit{Experiment \Rt:} the joint independence test of $X^1,\cdots,X^5$ where $$X^1,\cdots,X^{4},(X^5)'\sim_{\rm iid} N(0,1),\quad
	X^5=\left|(X^5)'\right|\times \mathrm{sign}\left(\prod\limits_{l=1}^{4}X^l\right).
	$$
	Clearly $X^1,\cdots,X^5$ are jointly dependent since $\prod_{l=1}^dX^l\geq 0$.
\end{itemize}

In both experiments, our primary goal is to investigate how the power of Gaussian MMD based test is influenced by a pre-fixed scaling parameter. These tests are also compared to the ones with scaling parameter selected via ``median'' heuristic. In order to evaluate tests with different scaling parameters under a unified framework, we determined the critical values for each test via permutation test.

For Experiment \Ro\  we fixed the sample size at $n=m=200$; and for Experiment \Rt\ at $n=400$. The number of permutations was set at $100$, and significance level at $\alpha=0.05$. We first repeated the experiments $100$ times under the null to verify that permutation tests indeed yield the correct size, up to Monte Carlo error. Each experiment was then repeated for 100 times and the observed power ($\pm$ one standard error) for different choices of the scaling parameter. The results are summarized in Figure \ref{Fg:single}. It is perhaps not surprising that the scaling parameter selected via ``median heuristic'' has little variation across each simulation run, and we represent its performance by a single value.

\begin{figure*}[!htbp]
	\begin{minipage}{0.5\textwidth}
		\centering
		\begin{tikzpicture}
		\begin{axis}[
		height=6.5cm,
		width=8cm,
		grid=major,
		xlabel = $\log(\nu)$,
		ylabel=Power,
		xmax = 4,
		xmin = -1,
		ymax = 1,
		ymin = 0,
		xtick={-1,0,1,2,3,4},
		legend style={at={(0.03,0.95)},anchor=north west,
		}
		]
		\addplot[mark=*,cyan,error bars/.cd,
		y dir=both,y explicit]
		coordinates {
			(-1,0.23)+-(0,0.03) (-0.75,0.23)+-(0,0.03) (-0.5,0.24)+-(0,0.03) (-0.25,0.22)+-(0,0.029) (0,0.22)+-(0,0.029) (0.25,0.2)+-(0,0.028) (0.5,0.2)+-(0,0.028) (0.75,0.2)+-(0,0.028) (1,0.22)+-(0,0.029) (1.25,0.22)+-(0,0.029) (1.5,0.23)+-(0,0.03) (1.75,0.23)+-(0,0.03) (2,0.23)+-(0,0.03) (2.125,0.25)+-(0,0.031) (2.25,0.27)+-(0,0.031) (2.375,0.33)+-(0,0.033) (2.5,0.43)+-(0,0.035) (2.625,0.54)+-(0,0.035) (2.75,0.67)+-(0,0.033) (2.875,0.75)+-(0,0.031) (3,0.82)+-(0,0.027) (3.125,0.91)+-(0,0.02) (3.25,0.94)+-(0,0.017) (3.375,0.98)+-(0,0.01) (3.5,0.99)+-(0,0.007) (3.625,0.99)+-(0,0.007) (3.75,0.99)+-(0,0.007) (3.875,0.99)+-(0,0.007) (4,1)+-(0,0)
		};  \addlegendentry{Single fixed $\nu$} ;
		\addplot[mark size=3,mark=*,red,only marks,error bars/.cd,
		y dir=both,y explicit]
		coordinates {
			(0.2,0.21)+-(0,0.04)
		};  \addlegendentry{Median} ;
		\end{axis}
		\end{tikzpicture}
	\end{minipage}
	\begin{minipage}{0.5\textwidth}
		\centering
		\begin{tikzpicture}
		\begin{axis}[
		height=6.5cm,
		width=8cm,
		grid=major,
		xlabel = $\log(\nu)$,
		xmax = 2,
		xmin = -3,
		ymax = 1,
		ymin = 0,
		xtick={-3,-2,-1,0,1,2},
		legend style={at={(0.03,0.95)},anchor=north west,
		}
		]
		\addplot[mark=*,cyan,error bars/.cd,
		y dir=both,y explicit]
		coordinates {
			(-3,0.08)+-(0,0.014) (-2.75,0.08)+-(0,0.014) (-2.5,0.08)+-(0,0.014) (-2.25,0.07)+-(0,0.013) (-2,0.05)+-(0,0.011) (-1.75,0.06)+-(0,0.012) (-1.5,0.09)+-(0,0.014) (-1.25,0.11)+-(0,0.016) (-1,0.21)+-(0,0.02) (-0.75,0.27)+-(0,0.022) (-0.625,0.34)+-(0,0.024) (-0.5,0.45)+-(0,0.025) (-0.375,0.55)+-(0,0.025) (-0.25,0.62)+-(0,0.024) (-0.125,0.71)+-(0,0.023) (0,0.75)+-(0,0.022) (0.125,0.82)+-(0,0.019) (0.25,0.84)+-(0,0.018) (0.375,0.86)+-(0,0.017) (0.5,0.92)+-(0,0.014) (0.625,0.92)+-(0,0.014) (0.75,0.93)+-(0,0.013) (0.875,0.93)+-(0,0.013) (1,0.93)+-(0,0.013) (1.25,0.9)+-(0,0.015) (1.375,0.87)+-(0,0.017) (1.5,0.86)+-(0,0.017) (1.625,0.85)+-(0,0.018) (1.75,0.82)+-(0,0.019) (1.875,0.81)+-(0,0.02) (2,0.8)+-(0,0.02) 
		};  
		\addplot[mark size=3,mark=*,red,only marks,error bars/.cd,
		y dir=both,y explicit]
		coordinates {
			(-2.15,0.07)+-(0,0.013)
		};  
		\end{axis}
		\end{tikzpicture}
	\end{minipage}
	\caption{Observed power against $\log(\nu)$ in Experiment \Ro\ (left) and Experiment \Rt (right).}\label{Fg:single}
\end{figure*}
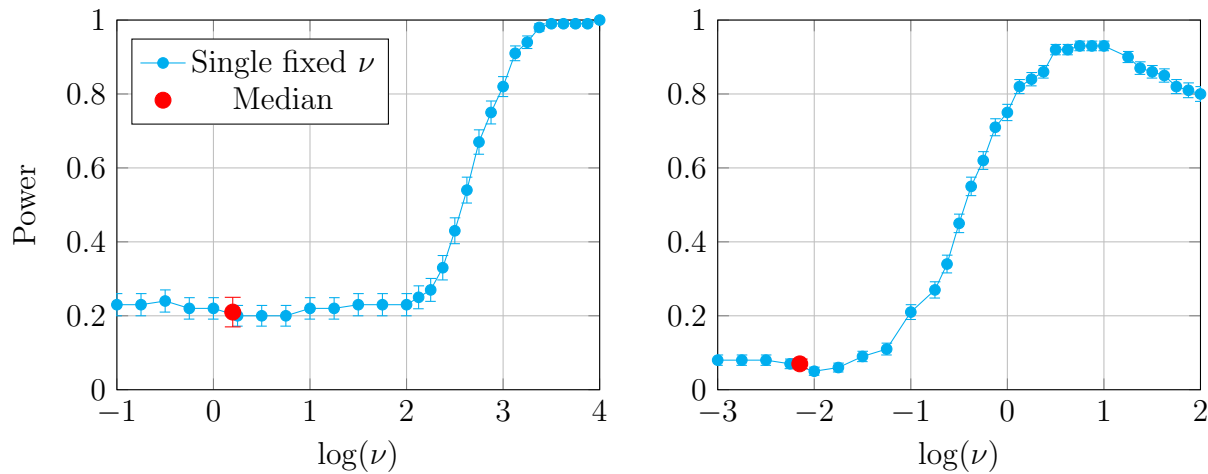

The importance of the scaling parameter is evident from Figure \ref{Fg:single} with the observed power varies quite significantly for different choices. It is also of interest to note that in these settings the ``median'' heuristic typically does not yield a scaling parameter with great power. More specifically, in Experiment \Ro, $\log(\nu_{\rm median})\approx 0.2$ and maximum power is attained at $\log(\nu)=4$; in Experiment \Rt,  $\log(\nu_{\rm median})\approx-2.15$ and maximum power is attained at $\log(\nu)=1$. This suggests that more appropriate choice of the scaling parameter may lead to much improved performance.

\subsection{Efficacy of Adaptation}\label{sec:sim_adapt}
Our second experiment aims to illustrate that the adaptive procedures we proposed in Section \ref{sec:adapt} indeed yield more powerful tests when compared with other alternatives that are commonly used in practice. In particular, we compare the proposed self-normalized adaptive test (\verb+S.A.+) with a couple of data-driven approaches, namely the ``median'' heuristic (\verb+Median+) and the unnormalized adaptive test (\verb+U.A.+) proposed in \cite{sriperumbudur2009kernel}. When computing both self-normalized and unnormalized test statistics, we first rescaled the squared distance $\|X_i-X_j\|^2$ by the dimensionality $d$ before taking maximum within a certain range of the scaling parameter. We considered two experiment setups:

%
%

\begin{itemize}
	\item \textit{Experiment \Rthree}: the homogeneity test with the underlying distributions being
	$$
	P\sim N(\mathbf{0},I_d),\quad Q\sim N\left(\mathbf{0},\left(1+2d^{-1/2}\right)I_d\right).
	$$
	As the `signal strength', the ratio between the variances of $Q$ and $P$ in each single direction is set to decrease to $1$ at the order $1/\sqrt{d}$ with $d$, which is the decreasing order of variance ratio that can be detected by the classical $F$-test.
	\item \textit{Experiment \Rfour}: the independence test of $X^1,X^2\in\RR^{d/2}$, where $X=(X^1,X^2)$ follows a mixture of $$N\left(\mathbf{0},I_d\right)\quad \text{and}\quad N\left(\mathbf{0},(1+6d^{-3/5})I_d\right)$$
	with mixture probability being $0.5$. Similarly, the ratio between the variances in each direction is set to decrease with $d$, but at a slightly higher rate.
\end{itemize}

To better compare different methods, we considered different combinations of sample size and dimensionality for each experiment. More specifically, for Experiment \Rthree, the sample sizes were set to be $m=n=25,50,75,\cdots,200$ and dimension $d=1,10,100,1000$; for Experiment \Rfour, the sample size were $n=100,200,\cdots,600$ and dimension $d=2,10,100,1000$. In both experiments, we fixed the significance level at $\alpha=0.05$, did $100$ permutations to calibrate the critical values as before. Again we simulated under $H_0$ to verify that the resulting tests have the targeted size, up to Monte Carlo error. The power of each method, estimated from $100$ such experiments, is reported in Figures \ref{Fg: adapt1} and \ref{Fg: adapt2}.

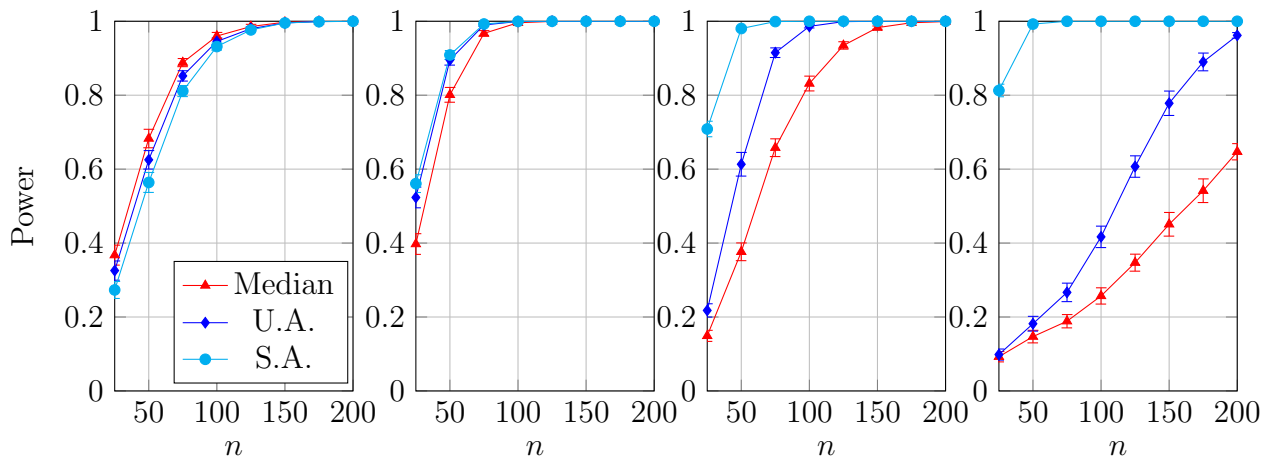
\begin{figure}[!htbp]
	\begin{minipage}{0.285\textwidth}
		\centering
		\begin{tikzpicture}
		\begin{axis}[
		height=6.5cm,
		width=4.75cm,
		grid=major,
		xlabel = $n$,
		ylabel=Power,
		xmax = 200,
		xmin = 25,
		ymax = 1,
		ymin = 0,
		xtick={50,100,150,200},
		legend style={at={(0.98,0.02)},anchor=south east
		}
		]
		\addplot[mark=triangle*,red,error bars/.cd,
		y dir=both,y explicit]
		coordinates {
			(25,0.3674)+-(0,0.027)
			(50,0.6828)+-(0,0.025)
			(75,0.8882)+-(0,0.011)
			(100,0.96)+-(0,0.0098)
			(125,0.9858)+-(0,0.0059)
			(150,0.9976)+-(0,0.0018)
			(175,0.9994)+-(0,0.00068)
			(200,1)+-(0,0)
		};  \addlegendentry{Median} ;
		\addplot[mark=diamond*,blue,error bars/.cd,
		y dir=both,y explicit]
		coordinates {
			(25,0.3256)+-(0,0.026)
			(50,0.6252)+-(0,0.025)
			(75,0.8522)+-(0,0.014)
			(100,0.9454)+-(0,0.0093)
			(125,0.98)+-(0,0.0055)
			(150,0.9954)+-(0,0.0023)
			(175,0.9994)+-(0,0.00068)
			(200,1)+-(0,0)
		};  \addlegendentry{U.A.} ;
		\addplot[mark=*,cyan,error bars/.cd,
		y dir=both,y explicit]
		coordinates {
			(25,0.2732)+-(0,0.023)
			(50,0.564)+-(0,0.027)
			(75,0.8112)+-(0,0.015)
			(100,0.9316)+-(0,0.0097)
			(125,0.9766)+-(0,0.0068)
			(150,0.9954)+-(0,0.0023)
			(175,0.9988)+-(0,0.00093)
			(200,1)+-(0,0)
			
		};  \addlegendentry{S.A.} ;
		\end{axis}
		\end{tikzpicture}
	\end{minipage}
	\begin{minipage}{0.235\textwidth}
		\centering
		\begin{tikzpicture}
		\begin{axis}[
		xlabel=$n$,
		height=6.5cm,
		width=4.75cm,
		grid=major,
		xmax = 200,
		xmin = 25,
		ymax = 1,
		ymin = 0,
		xtick={50,100,150,200},
		legend style={at={(1,0)},anchor=south east,
			nodes={scale=0.7, transform shape}
		}
		]
		\addplot[mark=triangle*,red,error bars/.cd,
		y dir=both,y explicit]
		coordinates {
			(25,0.3974)+-(0,0.028)
			(50,0.801)+-(0,0.02)
			(75,0.9672)+-(0,0.0075)
			(100,0.996)+-(0,0.0023)
			(125,0.9996)+-(0,0.00056)
			(150,1)+-(0,0)
			(175,1)+-(0,0)
			(200,1)+-(0,0)
		};  
		\addplot[mark=diamond*,blue,error bars/.cd,
		y dir=both,y explicit]
		coordinates {
			(25,0.5234)+-(0,0.028)
			(50,0.8954)+-(0,0.014)
			(75,0.99)+-(0,0.0038)
			(100,0.9994)+-(0,0.00068)
			(125,1)+-(0,0)
			(150,1)+-(0,0)
			(175,1)+-(0,0)
			(200,1)+-(0,0)
		};  
		\addplot[mark=*,cyan,error bars/.cd,
		y dir=both,y explicit]
		coordinates {
			(25,0.5608)+-(0,0.024)
			(50,0.9088)+-(0,0.012)
			(75,0.9926)+-(0,0.0029)
			(100,0.9996)+-(0,0.00056)
			(125,1)+-(0,0)
			(150,1)+-(0,0)
			(175,1)+-(0,0)
			(200,1)+-(0,0)
		};  
		\end{axis}
		\end{tikzpicture}
	\end{minipage}
	\begin{minipage}{0.235\textwidth}
		\centering
		\begin{tikzpicture}
		\begin{axis}[
		height=6.5cm,
		width=4.75cm,
		grid=major,
		xlabel=$n$,
		xmax = 200,
		xmin = 25,
		ymax = 1,
		ymin = 0,
		xtick={50,100,150,200},
		legend style={at={(1,0)},anchor=south east
		}
		]
		\addplot[mark=triangle*,red,error bars/.cd,
		y dir=both,y explicit]
		coordinates {
			((25,0.149)+-(0,0.015)
			(50,0.3764)+-(0,0.024)
			(75,0.658)+-(0,0.024)
			(100,0.8316)+-(0,0.02)
			(125,0.9346)+-(0,0.01)
			(150,0.9828)+-(0,0.0044)
			(175,0.996)+-(0,0.002)
			(200,0.9992)+-(0,0.00096)
		};  
		\addplot[mark=diamond*,blue,error bars/.cd,
		y dir=both,y explicit]
		coordinates {
			(25,0.2178)+-(0,0.018)
			(50,0.6132)+-(0,0.032)
			(75,0.915)+-(0,0.013)
			(100,0.9862)+-(0,0.0043)
			(125,0.999)+-(0,0.00086)
			(150,1)+-(0,0)
			(175,1)+-(0,0)
			(200,1)+-(0,0)
		};  
		\addplot[mark=*,cyan,error bars/.cd,
		y dir=both,y explicit]
		coordinates {
			(25,0.7086)+-(0,0.021)
			(50,0.9804)+-(0,0.0041)
			(75,0.9992)+-(0,0.00078)
			(100,1)+-(0,0)
			(125,1)+-(0,0)
			(150,1)+-(0,0)
			(175,1)+-(0,0)
			(200,1)+-(0,0)
		};  
		\end{axis}
		\end{tikzpicture}
	\end{minipage}
	\begin{minipage}{0.235\textwidth}
		\centering
		\begin{tikzpicture}
		\begin{axis}[
		height=6.5cm,
		width=4.75cm,
		grid=major,
		xlabel=$n$,
		xmax = 200,
		xmin = 25,
		ymax = 1,
		ymin = 0,
		xtick={50,100,150,200},
		legend style={at={(1,0)},anchor=south east,
			nodes={scale=0.7, transform shape}
		}
		]
		\addplot[mark=triangle*,red,error bars/.cd,
		y dir=both,y explicit]
		coordinates {
			(25,0.0926)+-(0,0.014)
			(50,0.147)+-(0,0.017)
			(75,0.1886)+-(0,0.018)
			(100,0.257)+-(0,0.022)
			(125,0.347)+-(0,0.023)
			(150,0.4508)+-(0,0.032)
			(175,0.5416)+-(0,0.032)
			(200,0.647)+-(0,0.022)
		};  
		\addplot[mark=diamond*,blue,error bars/.cd,
		y dir=both,y explicit]
		coordinates {
			(25,0.0984)+-(0,0.015)
			(50,0.1818)+-(0,0.02)
			(75,0.2666)+-(0,0.025)
			(100,0.4168)+-(0,0.029)
			(125,0.607)+-(0,0.029)
			(150,0.7782)+-(0,0.033)
			(175,0.89)+-(0,0.024)
			(200,0.9618)+-(0,0.0076)
		};  
		\addplot[mark=*,cyan,error bars/.cd,
		y dir=both,y explicit]
		coordinates {
			(25,0.8128)+-(0,0.017)
			(50,0.9924)+-(0,0.0027)
			(75,1)+-(0,0)
			(100,1)+-(0,0)
			(125,1)+-(0,0)
			(150,1)+-(0,0)
			(175,1)+-(0,0)
			(200,1)+-(0,0)
		};  
		\end{axis}
		\end{tikzpicture}
	\end{minipage}
	\caption{Observed power versus sample size in Experiment \Rthree\ for $d=1,10,100,1000$ from left to right.}\label{Fg: adapt1}
\end{figure}

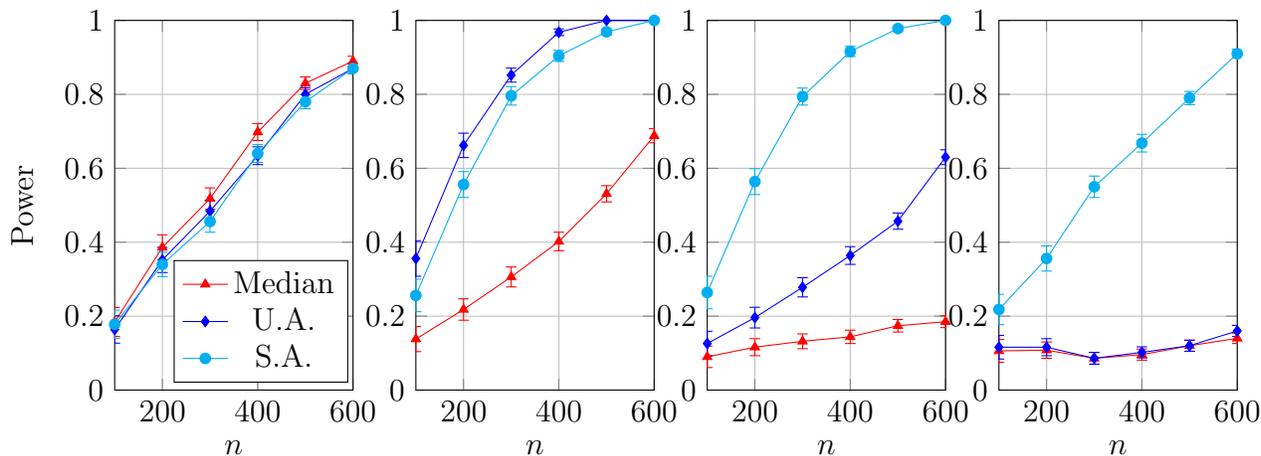
\begin{figure}[!htbp]
	\begin{minipage}{0.285\textwidth}
		\centering
		\begin{tikzpicture}
		\begin{axis}[
		height=6.5cm,
		width=4.75cm,
		grid=major,
		xlabel = $n$,
		ylabel=Power,
		xmax = 600,
		xmin = 100,
		ymax = 1,
		ymin = 0,
		xtick={200,400,600},
		legend style={at={(0.98,0.02)},anchor=south east
		}
		]
		\addplot[mark=triangle*,red,error bars/.cd,
		y dir=both,y explicit]
		coordinates {
			(100,0.184)+-(0,0.039)  (200,0.386)+-(0,0.034)  (300,0.518)+-(0,0.029) (400,0.698)+-(0,0.023) 
			 (500,0.83)+-(0,0.017)  (600,0.89)+-(0,0.013)
		};  \addlegendentry{Median} ;
		\addplot[mark=diamond*,blue,error bars/.cd,
		y dir=both,y explicit]
		coordinates {
			(100,0.164)+-(0,0.037)  (200,0.352)+-(0,0.034) (300,0.484)+-(0,0.029) (400,0.634)+-(0,0.024)
			(500,0.8)+-(0,0.018)  (600,0.87)+-(0,0.014)  
		};  \addlegendentry{U.A.} ;
		\addplot[mark=*,cyan,error bars/.cd,
		y dir=both,y explicit]
		coordinates {
			(100,0.178)+-(0,0.038)  (200,0.34)+-(0,0.033)  (300,0.456)+-(0,0.029) (400,0.64)+-(0,0.024) (500,0.78)+-(0,0.019) (600,0.87)+-(0,0.014) 
		};  \addlegendentry{S.A.} ;
		\end{axis}
		\end{tikzpicture}
	\end{minipage}
	\begin{minipage}{0.235\textwidth}
		\centering
		\begin{tikzpicture}
		\begin{axis}[
		xlabel=$n$,
		height=6.5cm,
		width=4.75cm,
		grid=major,
		xmax = 600,
		xmin = 100,
		ymax = 1,
		ymin = 0,
		xtick={200,400,600},
		legend style={at={(1,0)},anchor=south east,
			nodes={scale=0.7, transform shape}
		}
		]
		\addplot[mark=triangle*,red,error bars/.cd,
		y dir=both,y explicit]
		coordinates {
			(100,0.138)+-(0,0.034)  (200,0.218)+-(0,0.029) (300,0.306)+-(0,0.027)  (400,0.402)+-(0,0.025) 
			(500,0.531)+-(0,0.022)  (600,0.688)+-(0,0.019) 
		};  
		\addplot[mark=diamond*,blue,error bars/.cd,
		y dir=both,y explicit]
		coordinates {
			(100,0.356)+-(0,0.048)  (200,0.662)+-(0,0.033)  (300,0.852)+-(0,0.019)  (400,0.968)+-(0,0.009) (500,1)+-(0,0) (600,1)+-(0,0) 
		};  
		\addplot[mark=*,cyan,error bars/.cd,
		y dir=both,y explicit]
		coordinates {
			(100,0.256)+-(0,0.044)  (200,0.556)+-(0,0.035) (300,0.796)+-(0,0.025) (400,0.904)+-(0,0.015) 
			(500,0.969)+-(0,0.008) (600,1)+-(0,0) 
		};  
		\end{axis}
		\end{tikzpicture}
	\end{minipage}
	\begin{minipage}{0.235\textwidth}
		\centering
		\begin{tikzpicture}
		\begin{axis}[
		height=6.5cm,
		width=4.75cm,
		grid=major,
		xlabel=$n$,
		xmax = 600,
		xmin = 100,
		ymax = 1,
		ymin = 0,
		xtick={200,400,600},
		legend style={at={(1,0)},anchor=south east
		}
		]
		\addplot[mark=triangle*,red,error bars/.cd,
		y dir=both,y explicit]
		coordinates {
			(100,0.09)+-(0,0.029)  (200,0.116)+-(0,0.023) (300,0.132)+-(0,0.02)  (400,0.144)+-(0,0.018)  (500,0.174)+-(0,0.017)  (600,0.185)+-(0,0.016)
		};  
		\addplot[mark=diamond*,blue,error bars/.cd,
		y dir=both,y explicit]
		coordinates {
			(100,0.126)+-(0,0.033)  (200,0.196)+-(0,0.028)  (300,0.278)+-(0,0.026)  (400,0.364)+-(0,0.024) 
			(500,0.457)+-(0,0.022)  (600,0.63)+-(0,0.02) 
		};  
		\addplot[mark=*,cyan,error bars/.cd,
		y dir=both,y explicit]
		coordinates {
			(100,0.264)+-(0,0.044)  (200,0.564)+-(0,0.035) (300,0.794)+-(0,0.023) (400,0.916)+-(0,0.014) (500,0.978)+-(0,0.007) (600,1)+-(0,0) 
		};  
		\end{axis}
		\end{tikzpicture}
	\end{minipage}
	\begin{minipage}{0.235\textwidth}
		\centering
		\begin{tikzpicture}
		\begin{axis}[
		height=6.5cm,
		width=4.75cm,
		grid=major,
		xlabel=$n$,
		xmax = 600,
		xmin = 100,
		ymax = 1,
		ymin = 0,
		xtick={200,400,600},
		legend style={at={(1,0)},anchor=south east,
			nodes={scale=0.7, transform shape}
		}
		]
		\addplot[mark=triangle*,red,error bars/.cd,
		y dir=both,y explicit]
		coordinates {
			(100,0.106)+-(0,0.031)  (200,0.108)+-(0,0.022)  (300,0.086)+-(0,0.016)  (400,0.096)+-(0,0.015)  (500,0.12)+-(0,0.015) (600,0.14)+-(0,0.014)
			
		};  
		\addplot[mark=diamond*,blue,error bars/.cd,
		y dir=both,y explicit]
		coordinates {
			(100,0.116)+-(0,0.032) (200,0.116)+-(0,0.023) (300,0.086)+-(0,0.016)  (400,0.102)+-(0,0.015) (500,0.12)+-(0,0.015)  (600,0.16)+-(0,0.015) 
		};  
		\addplot[mark=*,cyan,error bars/.cd,
		y dir=both,y explicit]
		coordinates {
			(100,0.218)+-(0,0.041)  (200,0.356)+-(0,0.034) (300,0.55)+-(0,0.029)  (400,0.668)+-(0,0.024) (500,0.79)+-(0,0.018) (600,0.91)+-(0,0.012) 
		};  
		\end{axis}
		\end{tikzpicture}
	\end{minipage}
	\caption{Observed power versus sample size in Experiment \Rfour\ for $d=2,10,100,1000$ from left to right.}\label{Fg: adapt2}
\end{figure}

As Figures \ref{Fg: adapt1} and \ref{Fg: adapt2} show, for both experiments, these tests are comparable in low-dimensional settings. But as $d$ increases, the proposed self-normalized adaptive test becomes more and more preferable to the two alternatives. For example, for Experiment \Rfour, when $d=1000$, the observed power of the proposed self-normalized adaptive test is about $90\%$ when $n=600$, while the other two tests have power around only $15\%$.

\subsection{Data Example}
Finally, we considered applying the proposed self-normalized adaptive test in a data example from \cite{mooij2016distinguishing}. The dataset consists of three variables, altitude (Alt), average temperature (Temp) and average duration of sunshine (Sun) from different weather stations. One goal of interest is to figure out the causal relationship among the three variables by figuring out a suitable directed acyclic graph (DAG) among them. Following \cite{peters2014causal}, if a set of random variables $X^1,\cdots$,$X^d$ follow a DAG $\Gcal_0$, then we assume that they follow a sequence of additive models:
$$
X^l=\sum\limits_{r\in \mathrm{PA}^l}f_{l,r}(X^{r})+N^l,\quad\forall\ 1\leq l\leq d,
$$
where $N^l$'s are independent Gaussian noises and $\mathrm{PA}^l$ denotes the collection of parent nodes of node $l$ specified  by $\Gcal_0$. As shown by \citep{peters2014causal}, $\Gcal_0$ is identifiable from the joint distribution of $X^1,\cdots,X^d$ under the assumption of $f_{l,r}$'s being non-linear. Therefore a natural method of deciding a specific DAG underlying a set of random variables is by testing the independence of the regression residuals after fitting the DAG induced additive models. In our case, there are totally $25$ possible DAGs for the three variables. We can apply independence tests for the residuals for each of the 25 DAGs and choose the one with the largest $p$-value as the most plausible underlying DAG. See \cite{peters2014causal} for more details. 

As before, we considered three different ways for independence tests: the proposed self-normalized adaptive test (\verb+S.A.+), Gaussian kernel embedding based independent test with the scaling parameter determined by the ``median'' heuristic (\verb+Median+), and the unnormalized adaptive test from \cite{sriperumbudur2009kernel} (\verb+U.A.+). Note that the three variables have different scales and we standardize them before applying the tests of independence.

The overall sample size of the dataset is $349$. Each time we randomly select $150$ samples and compute the $p$-value associated with each DAG. The $p$-value is again computed based on $100$ permutations. We repeated the experiment for $1000$ times and recorded for each test the DAG with the largest $p$-value. All three tests agree on the top three most selected DAGs and they are shown in Figure \ref{fig:DAG}.

\begin{figure}[!htbp]
	\begin{subfigure}[b]{.32\linewidth}
		\resizebox{\linewidth}{!}{
			\begin{tikzpicture}
			\node[draw,circle,fill=blue!40,minimum size=1.5cm] (A) at (0,0) {Alt};
			\node[draw,circle,fill=blue!40,minimum size=1.5cm] (T) at (-2,-4) {Temp};
			\node[draw,circle,fill=blue!40,minimum size=1.5cm] (S) at (2,-4) {Sun};
			\draw [-{Stealth[length=2mm]}]
			(A) edge (T) (A) edge (S) (T) edge (S);
			\end{tikzpicture}
		}
		\caption*{\textbf{DAG \Ro}}
	\end{subfigure}\hfill%
    \begin{subfigure}[b]{.32\linewidth}
    	\resizebox{\linewidth}{!}{
    		\begin{tikzpicture}
    		\node[draw,circle,fill=blue!40,minimum size=1.5cm] (A) at (0,0) {Alt};
    		\node[draw,circle,fill=blue!40,minimum size=1.5cm] (T) at (-2,-4) {Temp};
    		\node[draw,circle,fill=blue!40,minimum size=1.5cm] (S) at (2,-4) {Sun};
    		\draw [-{Stealth[length=2mm]}]
    		(A) edge (T) (A) edge (S) (S) edge (T);
    		\end{tikzpicture}
    	}
    	\caption*{DAG \Rt}
    \end{subfigure}\hfill%
	\begin{subfigure}[b]{.32\linewidth}
		\resizebox{\linewidth}{!}{
			\begin{tikzpicture}
			\node[draw,circle,fill=blue!40,minimum size=1.5cm] (A) at (0,0) {Alt};
			\node[draw,circle,fill=blue!40,minimum size=1.5cm] (T) at (-2,-4) {Temp};
			\node[draw,circle,fill=blue!40,minimum size=1.5cm] (S) at (2,-4) {Sun};
			\draw [-{Stealth[length=2mm]}]
			(A) edge (T) (S) edge (A) (S) edge (T);
			\end{tikzpicture}
		}
		\caption*{DAG \Rthree}
	\end{subfigure}
	\caption{DAGs with the top 3 highest probabilities of being selected.}
	\label{fig:DAG}
\end{figure}
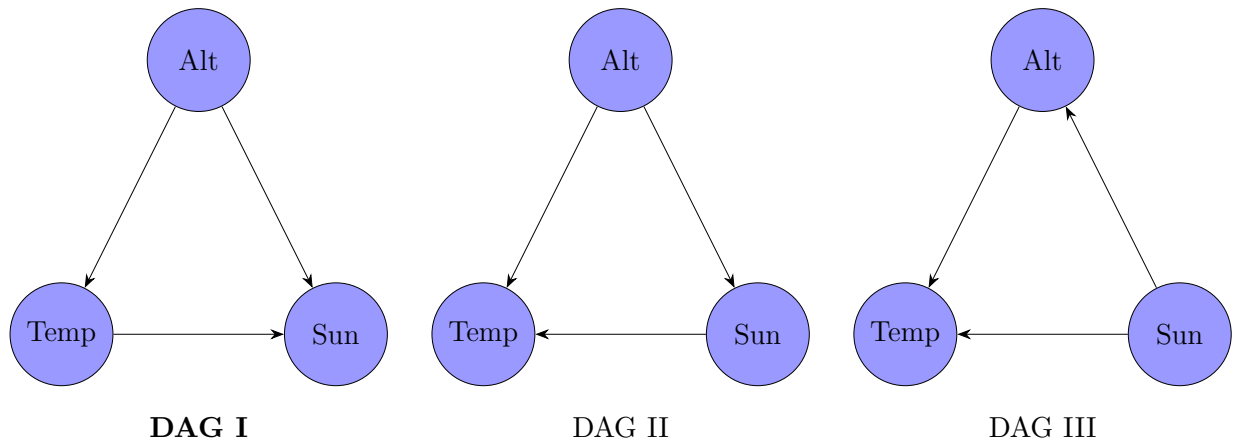

In addition, we report in Table \ref{tab:DAG} the frequencies that these three DAGs were selected by each of the tests. They are generally comparable with the proposed method more consistently selecting DAG I, the one heavily favored by all three methods.

\begin{table}[!htbp]
	\centering
	\begin{tabular}{?c?C{3cm}C{3cm}C{3cm}?}
		\thickhline
		\diagbox{Test}{Prob($\%$)}{DAG} & I & II & III \\
		\thickhline
		Median & 78.5 &4.7& 14.5  \\
		U.A. & 81.4&  8.1&8.5 \\
		S.A. &83.4 &9.8&4.7\\
		\thickhline
	\end{tabular}
	\caption{Frequency that each DAG in Figure \ref{fig:DAG} was selected by three tests.}\label{tab:DAG}
\end{table}


\section{Concluding Remarks}
\label{sec:disc}
In this paper, we provide a systematic investigation of the statistical properties of Gaussian kernel embedding based nonparametric tests. Our contribution is twofold. 

First of all, we provide theoretical justifications for this popular class of methods by showing that they are capable of detecting the smallest possible deviation from the null hypotheses in the context of goodness-of-fit, homogeneity, and independence test. Our analyses also suggest that the existing theoretical studies do not fully explain the practical success of these methods because they assume a fixed kernel or scaling parameter for Gaussian kernel and these methods, as we argue, are most powerful with a varying scaling parameter.

From a more practical viewpoint, we offer general guidelines on choosing the scaling parameter for Gaussian kernels: our results highlight the importance of using larger scaling parameter for larger sample size and establish the relationship between the smoothness of the underlying densities and the appropriate scaling parameter. Furthermore, we introduce new adaptive testing procedures for goodness-of-fit, homogeneity, and independence respectively that are optimal, up to a polynomial of iterated logarithmic factor, for a wide range of smooth densities while not needing to know the level of smoothness.

RKHS embedding has emerged as a powerful tool for nonparametric inferences and has found success in numerous applications. Our work here provides insights into their operating characteristics and leads to improved testing procedures within the framework.

\section{Proofs}
\label{sec:proof}
Throughout this section, we shall write $a_n\lesssim b_n$ if there exists a universal constant $C>0$ such that $a_n\leq Cb_n$. Similarly, we write $a_n\gtrsim b_n$ if $b_n\lesssim a_n$, and $a_n\asymp b_n$ if $a_n\lesssim b_n$ and $a_n\gtrsim b_n$. When the the constant depends on another quantity $D$, we shall write $a_n\lesssim_D b_n$. Relations $\gtrsim_D$ and $\asymp_D$ are defined accordingly.

\begin{proof}[Proof of Theorem \ref{th:gofnull}] 
We begin with \eqref{eq:gofnull1}. Note that $\hat{\gamma_{\nu_n}^2}(\PP,\PP_0)$ is a U-statistic. We can apply the general techniques for U-statistics to establish its asymptotic normality. In particular, as shown in \cite{hall1984central}, it suffices to verify the following four conditions:
\begin{align}
&\left(2\nu_n\over \pi\right)^{d/2}\EE \bar{G}_{\nu_n}^2(X_1,X_2)\to\|p_0\|_{L_2}^2\label{lc0},\\
&{\EE \bar{G}_{\nu_n}^4(X_1,X_2)\over n^2[\EE\bar{G}_{\nu_n}^2(X_1,X_2)]^2}\rightarrow 0\label{lc1},\\
&{\EE [\bar{G}_{\nu_n}^2(X_1,X_2)\bar{G}_{\nu_n}^2(X_1,X_3)]\over n[\EE\bar{G}_{\nu_n}^2(X_1,X_2)]^2}\rightarrow 0\label{lc2},\\
&{\EE H_{\nu_n}^2(X_1,X_2)\over [\EE\bar{G}_{\nu_n}^2(X_1,X_2)]^2}\rightarrow 0\label{lc3},
\end{align}
as $n\rightarrow \infty$, where
$$
H_{\nu_n}(x,y)=\EE\bar{G}_{\nu_n}(x,X_3)\bar{G}_{\nu_n}(y,X_3),\quad\forall\ x,y\in \RR^d.
$$

\paragraph{Verifying Condition \eqref{lc0}.} Note that
\begin{align*}
\EE\bar{G}_{\nu_n}^2(X_1,X_2)=\EE G_{\nu_n}^2(X_1,X_2)-2\EE \{\EE [G_{\nu_n}(X_1,X_2)|X_1]\}^2+[\EE G_{\nu_n}(X_1,X_2)]^2.
\end{align*}
By Lemma \ref{le:gausskernel}, 
\begin{align*}
\EE G_{\nu_n}(X_1,X_2)=\left(\frac{\pi}{\nu_n}\right)^{\frac{d}{2}}\int\exp\left(-\frac{\|\omega\|^2}{4\nu_n}\right)\left\|\Fcal{p_0}(\omega)\right\|^2d \omega,
\end{align*}
which immediately yields
$$
\left(\frac{\nu_n}{\pi}\right)^{\frac{d}{2}}\EE G_{\nu_n}(X_1,X_2)\to \|p_0\|_{L_2}^2
$$
and
$$
\left(\frac{2\nu_n}{\pi}\right)^{\frac{d}{2}}\EE G^2_{\nu_n}(X_1,X_2)=\left(\frac{2\nu_n}{\pi}\right)^{\frac{d}{2}}\EE G_{2\nu_n}(X_1,X_2)\to \|p_0\|_{L_2}^2,
$$
as $\nu_n\to\infty$.

On the other hand,
\begin{align*}
&\EE \{\EE [G_{\nu_n}(X_1,X_2)|X_1]\}^2\\=&\int \left(\int G_{\nu_n}(x,x')G_{\nu_n}(x,x'')p_0(x)d x\right)p_0(x')p_0(x'')d x'd x''\\
=&\int \left(\int G_{2\nu_n}(x,(x'+x'')/2)p_0(x)d x\right)G_{\nu_n/2}(x',x'')p_0(x')p_0(x'')d x'd x''.
\end{align*}
Let $Z\sim N(0,4\nu_nI_d)$. Then
\begin{align*}
\int G_{2\nu_n}(x,(x'+x'')/2)p_0(x)d x&=(2\pi)^{d/2}\EE\left[\Fcal{p}_0(Z)\exp\left(\frac{x'+x''}{2}iZ\right)\right]\\
&\leq (2\pi)^{d/2}\sqrt{\EE\left\|\Fcal{p}_0(Z)\right\|^2}\\
&\lesssim_d \|p_0\|_{L_2}/\nu_n^{d/4}.
\end{align*} 
Thus
$$
\EE \{\EE [G_{\nu_n}(X_1,X_2)|X_1]\}^2\lesssim_d \|p_0\|_{L_2}^3/\nu_n^{3d/4}.
$$
Condition \eqref{lc0} then follows.

\paragraph{Verifying Conditions \eqref{lc1} and \eqref{lc2}.} Since
$$
\EE \bar{G}_{\nu_n}^2(X_1,X_2)\asymp_{d,p_0} \nu_n^{-d/2}.
$$
and
$$
\EE\bar{G}_{\nu_n}^4(X_1,X_2)\lesssim \EE G_{\nu_n}^4(X_1,X_2)\lesssim_d \nu_n^{-d/2},
$$
we obtain
$$
n^{-2}\EE \bar{G}_{\nu_n}^4(X_1,X_2)/(\EE\bar{G}_{\nu_n}^2(X_1,X_2))^2\lesssim_{d,p_0} \nu_n^{d/2}/n^{2}\rightarrow 0.
$$

Similarly,
\begin{align*}
\EE \bar{G}_{\nu_n}^2(X_1,X_2)\bar{G}_{\nu_n}^2(X_1,X_3)&\lesssim \EE G_{\nu_n}^2(X_1,X_2)G_{\nu_n}^2(X_1,X_3)\\
&=\EE G_{2\nu_n}(X_1,X_2)G_{2\nu_n}(X_1,X_3)\\
&\lesssim_{d,p_0} \nu_n^{-3d/4}.
\end{align*}
This implies
$$
n^{-1}\EE\bar{G}_{\nu_n}^2(X_1,X_2)\bar{G}_{\nu_n}^2(X_1,X_3)/(\EE\bar{G}_{\nu_n}^2(X_1,X_2))^2\lesssim_{d,p_0} \nu_n^{d/4}/n\rightarrow 0,
$$
which verifies \eqref{lc2}.

\paragraph{Verifying Condition \eqref{lc3}.} We now prove (\ref{lc3}). It suffices to show
$$
\nu_n^{d}\EE(\EE(\bar{G}_{\nu_n}(X_1,X_2)\bar{G}_{\nu_n}(X_1,X_3)|X_2,X_3))^2\rightarrow 0
$$
as $n\rightarrow \infty$.
Note that
\begin{align*}
&\EE(\EE(\bar{G}_{\nu_n}(X_1,X_2)\bar{G}_{\nu_n}(X_1,X_3)|X_2,X_3))^2\\\lesssim &\EE(\EE(G_{\nu_n}(X_1,X_2)G_{\nu_n}(X_1,X_3)|X_2,X_3))^2\\=& \EE G_{\nu_n}(X_1,X_2)G_{\nu_n}(X_1,X_3)G_{\nu_n}(X_4,X_2)G_{\nu_n}(X_4,X_3)\\
=&\EE(G_{\nu_n}(X_1,X_4)G_{\nu_n}(X_2,X_3)\EE(G_{\nu_n}(X_1+X_4,X_2+X_3)|X_1-X_4,X_2-X_3)).
\end{align*}
Since for any $\delta>0$,
\begin{eqnarray*}
\nu_n^d\EE(G_{\nu_n}(X_1,X_4)G_{\nu_n}(X_2,X_3)\EE(G_{\nu_n}(X_1+X_4,X_2+X_3)|X_1-X_4,X_2-X_3)\\
(\mathds{1}_{\{\|X_1-X_4\|>\delta\}}+\mathds{1}_{\|X_2-X_3\|>\delta\}}))\rightarrow 0,
\end{eqnarray*}
it remains to show that
\begin{eqnarray*}
\nu_n^d\EE(G_{\nu_n}(X_1,X_4)G_{\nu_n}(X_2,X_3)\EE(G_{\nu_n}(X_1+X_4,X_2+X_3)|X_1-X_4,X_2-X_3)\\
\mathds{1}_{\{\|X_1-X_4\|\leq\delta,\|X_2-X_3\|\leq \delta\}}))\rightarrow 0
\end{eqnarray*}
for some $\delta>0$, which holds as long as
\begin{align}
\EE(G_{\nu_n}(X_1+X_4,X_2+X_3)|X_1-X_4,X_2-X_3)\rightarrow 0\label{uc}
\end{align}
uniformly on $\{\|X_1-X_4\|\leq \delta,\|X_2-X_3\|\leq \delta\}$.

Let
$$
Y_1=X_1-X_4,\quad Y_2=X_2-X_3,\quad Y_3=X_1+X_4,\quad Y_4=X_2+X_3.
$$
Then
\begin{align*}
&\EE(G_{\nu_n}(X_1+X_4,X_2+X_3)|X_1-X_4,X_2-X_3)\\= &\left(\frac{\pi}{\nu_n}\right)^{\frac{d}{2}}\int\exp\left(-\frac{\|\omega\|^2}{4\nu_n}\right)\Fcal{p_{Y_1}}(\omega)\overline{\Fcal{p_{Y_2}}}(\omega)d \omega\\
\leq&\sqrt{\left(\frac{\pi}{\nu_n}\right)^{\frac{d}{2}}\int\exp\left(-\frac{\|\omega\|^2}{4\nu_n}\right)\left\|\Fcal{p_{Y_1}}(\omega)\right\|^2d \omega}\sqrt{\left(\frac{\pi}{\nu_n}\right)^{\frac{d}{2}}\int\exp\left(-\frac{\|\omega\|^2}{4\nu_n}\right)\left\|\Fcal{p_{Y_2}}(\omega)\right\|^2d \omega}
\end{align*}
where 
\begin{align*}
p_{y}(y')=\frac{p(Y_1=y,Y_3=y')}{p(Y_1=y)}=\frac{p_0\left(\frac{y+y'}{2}\right)p_0\left(\frac{y'-y}{2}\right)}{\int p_0\left(\frac{y+y'}{2}\right)p_0\left(\frac{y'-y}{2}\right)d y'}
\end{align*}
is the conditional density of $Y_3$ given $Y_1=y$. Thus to prove (\ref{uc}), it suffices to show
\begin{align*}
h_n(y)&:=\left(\frac{\pi}{\nu_n}\right)^{\frac{d}{2}}\int\exp\left(-\frac{\|\omega\|^2}{4\nu_n}\right)\left\|\Fcal{p_y}(\omega)\right\|^2d \omega\\
&=\pi^{\frac{d}{2}}\int\exp\left(-\frac{\|\omega\|^2}{4}\right)\left\|\Fcal{p_y}(\sqrt{\nu_n}\omega)\right\|^2d \omega\\
&\rightarrow 0
\end{align*}
uniformly over $\{y:\ \|y\|\leq \delta\}$.

Note that
$$
h_n(y)=\EE G_{\nu_n}(X,X')
$$
where $X,X'\sim_{\rm iid} p_y$, which suggests
$
h_n(y)\rightarrow 0
$
pointwisely. To prove the uniform convergence of $h_n(y)$, we only need to show
$$
\lim\limits_{y_1\rightarrow y}\sup\limits_{n}|h_n(y_1)-h_n(y)|=0
$$
for any $y$.

Since $p_0\in L_2$, $P(Y_1=y)$ is continuous. Therefore, the almost surely continuity of $p_0$ immediately suggests that for every $y$,
$
p_{y_1}(\cdot)\rightarrow p_y(\cdot)
$
almost surely as $y_1\rightarrow y$. Considering that $p_{y_1}$ and $p_y$ are both densities, it follows that
$$
|\Fcal{p_{y_1}}(\omega)-\Fcal{p_y}(\omega)|\leq (2\pi)^{-d/2}\int |p_{y_1}(y')-p_y(y')|d y'\rightarrow 0,
$$
\ie, $\Fcal{p_{y_1}}\rightarrow \Fcal{p_y}$ uniformly as $y_1\rightarrow y$. Therefore we have
$$
\sup_{n\to\infty}|h_n(y_1)-h_n(y)|\lesssim\left\|\Fcal{p_{y_1}}-\Fcal{p_y}\right\|_{L_\infty}\rightarrow 0,
$$
which ensures the uniform convergence of $h_n(y)$ to $h(y)$ over $\{y:\ \|y\|\leq \delta\}$, and hence (\ref{lc3}).

Indeed, we have shown that
$$
\frac{n\hat{\gamma_{\nu_n}^2}(\PP,\PP_0)}{\sqrt{2\EE[\bGnn(X_1,X_2)]^2}}\to_d N(0,1).
$$
By Slutsky Theorem, in order to prove (\ref{eq:gofnull2}), it sufficies to show
$$
\hat{s}_{n,\nu_n}^2/\EE[\bGnn(X_1,X_2)]^2\to_p1,
$$
which is equivalent to 
\begin{align}\label{consistent-est}
\tilde{s}_{n,\nu_n}^2/\EE[\bGnn(X_1,X_2)]^2\to_p 1
\end{align}
since $1/n^2=o(\EE[\bGnn(X_1,X_2)]^2)$.

It follows from
$$
\EE \left(\tilde{s}_{n,\nu_n}^2\right)=\EE[\bGnn(X_1,X_2)]^2
$$
and
\begin{align*}
&\var\left(\tilde{s}_{n,\nu_n}^2\right)\\\lesssim &n^{-4}\var\left(\sum\limits_{1\leq i\neq j\leq n}G_{2\nu_n}(X_i,X_j)\right)+n^{-6}\var\left(\sum\limits_{\substack{1\le i,j_1,j_2\le n\\ |\{i,j_1,j_2\}|=3}}G_{\nu_n}(X_i,X_{j_1})G_{\nu_n}(X_i,X_{j_2})\right)\\&+n^{-8}\var\left(\sum\limits_{\substack{1\le i_1,i_2,j_1,j_2\le n\\ |\{i_1,i_2,j_1,j_2\}|=4}}G_{\nu_n}(X_{i_1},X_{j_1})G_{\nu_n}(X_{i_2},X_{j_2})\right)\\
\lesssim &n^{-2}\EE G_{4\nu_n}(X_1,X_2)+n^{-1}\EE G_{2\nu_n}(X_1,X_2)G_{2\nu_n}(X_1,X_3)+n^{-1}(\EE G_{2\nu_n}(X_1,X_2))^2\\
=\ &o((\EE[\bGnn(X_1,X_2)]^2)^2).
\end{align*}
that (\ref{consistent-est}) holds.
\end{proof}
\vskip 25pt

\begin{proof}[Proof of Theorem \ref{th:gofpower}] Recall that
\begin{align*}
\hat{\gamma_{\nu_n}^2}(\PP,\PP_0)=&\frac{1}{n(n-1)}\sum_{i\neq j}\bar{G}_{\nu_n}(X_i,X_j;\PP_0)\\
=&\gamma_{\nu_n}^2(\PP,\PP_0)+\frac{1}{n(n-1)}\sum_{i\neq j}\bar{G}_{\nu_n}(X_i,X_j;\PP)\\
&+{2\over n}\sum_{i=1}^n\biggl(\EE_{X\sim \PP} [G_{\nu_n}(X_i,X)|X_i]-\EE_{X\sim \PP_0} [G_{\nu_n}(X_i,X)|X_i]\\
&-\EE_{X,X'\sim_{\rm iid} \PP} G_{\nu_n}(X,X')+\EE_{(X,Y)\sim \PP\otimes\PP_0} G_{\nu_n}(X,Y)\biggr).
%
\end{align*}
Denote by the last two terms on the rightmost hand side by $V_{\nu_n}^{(1)}$ and $V_{\nu_n}^{(2)}$ respectively. It is clear that $\EE V_{\nu_n}^{(1)}=\EE V_{\nu_n}^{(2)}=0$. Then it suffices to show that
\begin{equation}
\sup_{\substack{p\in \Wcal^{s,2}(M)\\ \|p-p_0\|\ge \Delta_n}}\frac{\EE\left( V_{\nu_n}^{(1)}\right)^2+\EE \left(V_{\nu_n}^{(2)}\right)^2}{\gamma_{\nu_n}^4(\PP,\PP_0)}\to 0\label{lc4}
\end{equation}
and
\begin{equation}
\inf_{\substack{p\in \Wcal^{s,2}(M)\\ \|p-p_0\|\ge \Delta_n}}\frac{n\gamma^2_{G_{\nu_n}}(\PP,\PP_0)}{\sqrt{
\EE\left(\hat{s}_{n,\nu_n}^2\right)
}}\to\infty\label{lc5}
\end{equation}
as $n\to\infty$.

We first prove (\ref{lc4}). Note that $\|p\|_{L_2}\le \|p\|_{\Wcal^{s,2}(M)}\le M$. Following arguments similar to those in the proof of Theorem \ref{th:gofnull}, we get
$$
\EE\left( V_{\nu_n}^{(1)}\right)^2\lesssim n^{-2}\EE G_{\nu_n}^2(X_1,X_2)\lesssim_d M^2 n^{-2}\nu_n^{-d/2},
$$
and
\begin{align*}
\EE\left( V_{\nu_n}^{(2)}\right)^2&\leq {4\over n}\EE \left[\EE_{X\sim \PP} [G_{\nu_n}(X_i,X)|X_i]-\EE_{X\sim \PP_0} [G_{\nu_n}(X_i,X)|X_i]\right]^2\\
&={4\over n}\int \left(\int G_{2\nu_n}(x,(x'+x'')/2)p(x)d x\right)G_{\nu_n/2}(x',x'')f(x')f(x'')d x'd x''\\
&\lesssim_d {4M\over n\nu^{d/4}} \int G_{\nu_n/2}(x',x'')|f(x')||f(x'')|d x'd x''\\
&\lesssim_d {4M\over n\nu^{3d/4}}\|f\|_{L_2}^2.
\end{align*}

By Lemma \ref{le:gaussmmd}, there exists a constant $C>0$ depending on $s$ and $M$ only such that for $f\in \Wcal^{s,2}(M)$,
\begin{align*}
\int\exp\left(-\frac{\|\omega\|^2}{4\nu_n}\right)\left\|\Fcal{f}(\omega)\right\|^2 d \omega\geq \frac{1}{4}\|f\|_{L_2}^2
\end{align*}
given that $\nu_n\geq C\|f\|_{L_2}^{-2/s}$. Because $\nu_n\Delta_n^{s/2}\rightarrow \infty$, we obtain
$$
\gamma_{\nu_n}^2(\PP,\PP_0)\gtrsim_d \nu_n^{-d/2}\|f\|_{L_2}^2,
$$
for sufficiently large $n$. Thus
$$
\sup_{\substack{p\in \Wcal^{s,2}(M)\\ \|p-p_0\|\ge \Delta_n}}\frac{\EE\left( V_{\nu_n}^{(1)}\right)^2}{\gamma_{\nu_n}^4(\PP,\PP_0)}\lesssim_d M^2(n^2\nu_n^{-d/2}\Delta_n^4)^{-1}\rightarrow 0
$$
and
$$
\sup_{\substack{p\in \Wcal^{s,2}(M)\\ \|p-p_0\|\ge \Delta_n}}\frac{\EE \left(V_{\nu_n}^{(2)}\right)^2}{\gamma_{G_{\nu_n}}^4(\PP,\PP_0)}\lesssim_d M(n\nu_n^{-d/4}\Delta_n^2)^{-1}\rightarrow 0,
$$
as $n\rightarrow \infty$.

Next we prove (\ref{lc5}). It follows from
$$
\EE\left(\hat{s}_{n,\nu_n}^2\right)\leq \EE\max\left\{\left|\tilde{s}_{n,\nu_n}^2\right|,1/n^2\right\}\lesssim\EE G_{2\nu_n}(X_1,X_2)+1/n^2\lesssim_d M^2\nu_n^{-d/2}+1/n^2
$$
that (\ref{lc5}) holds.
\end{proof}
\vskip 25pt

\begin{proof}[Proof of Theorem \ref{th:goflower}]	
This, in a certain sense, can be viewed as an extension of results from \cite{ingster1987minimax}, and the proof proceeds in a similar fashion. While \cite{ingster1987minimax} considered the case when $p_0$ is the uniform distribution on $[0,1]$, we shall show that similar bounds hold for a wider class of $p_0$.

For any $M>0$ and $p_0$ such that $\|p_0\|_{\Wcal^{s,2}}<M$, let
\begin{align*}
H_1^{\rm GOF}(\Delta_n;s,M-\|p_0\|_{\Wcal^{s,2}})^*&\\
:=\{p\in\Wcal^{s,2}:&\ \|p-p_0\|_{\Wcal^{s,2}}\leq M-\|p_0\|_{\Wcal^{s,2}},\ \|p-p_0\|_{L_2}\geq \Delta_n\}.
\end{align*}
It is clear that $H_1^{\rm GOF}(\Delta_n;s)\supset H_1^{\rm GOF}(\Delta_n;s,M-\|p_0\|_{\Wcal^{s,2}})^*$. Hence it suffices to prove Theorem \ref{th:goflower} with $H_1^{\rm GOF}(\Delta_n;s)$ replaced by $H_1^{\rm GOF}(\Delta_n;s,M)^*$ for an arbitrary $M>0$. We shall  abbreviate $H_1^{\rm GOF}(\Delta_n;s,M)^*$ as $H_1^{\rm GOF}(\Delta_n;s)^*$ in the rest of the proof.

Since $p_0$ is almost surely continuous, there exists $x_0\in\RR^d$ and $\delta,c>0$ such that
$$
p_0(x)\geq c>0,\quad\forall\ \|x-x_0\|\leq \delta.
$$
In light of this, we shall assume $p_0(x)\geq c>0$, for all $x\in[0,1]^d$ without loss of generality.

Let $\bm{a}_n$ be a multivariate random index. As proved in \cite{ingster1987minimax}, in order to prove the existence of $\alpha\in(0,1)$ such that no asymptotic $\alpha$-level test can be consistent, it suffices to identify $p_{n,\bm{a}_n} \in H_1^{\rm GOF}(\Delta_n;s)^*$ for all possible values of $\bm{a}_n$ 
such that \begin{align}\label{l2b}
\EE_{p_0}\left(\frac{p_n(X_1,\cdots,X_n)}{\prod_{i=1}^n p_0(X_i)}\right)^2=O(1),
\end{align}
where
$$
p_n(x_1,\cdots,x_n)=\EE_{\bm{a}_n}\left(\prod\limits_{i=1}^np_{n,\bm{a}_n}(x_i)\right),\ \forall\ x_1,\cdots,x_n,
$$
\ie, $p$ is the mixture of all $p_{n,\bm{a}_n}$'s.

Let $\mathds{1}_{\{x\in [0,1]^d\}},\phi_{n,1},\cdots,\phi_{n,B_n}$ be an orthonormal sets of functions in $L^2(\Rd)$ such that the supports of $\phi_{n,1},\cdots,\phi_{n,B_n}$ are disjoint and all included in $[0,1]^d$. Let $\bm{a}_n=(a_{n,1},\cdots,a_{n,B_n})$ satisfy that $a_{n,1},\cdots,a_{n,B_n}$ are independent and that
$$
p(a_{n,k}=1)=p(a_{n,k}=-1)=\frac{1}{2},\quad \forall\ 1\leq k\leq B_n.
$$
Define
$$
p_{n,\bm{a}_n}=p_0+r_n\sum\limits_{k=1}^{B_n}a_{n,k}\phi_{n,k}.
$$
Then
$$
\frac{p_{n,\bm{a}_n}}{p_0}=1+r_n\sum\limits_{k=1}^{B_n}a_{n,k}\frac{\phi_{n,k}}{p_0},
$$
where $1,\frac{\phi_{n,1}}{p_0},\cdots,\frac{\phi_{n,B_n}}{p_0}$ are orthogonal in $L_2(P_0)$.

By arguments similar to those in \cite{ingster1987minimax}, we find
\begin{align*}
\EE_{p_0}\left(\frac{p_n(X_1,\cdots,X_n)}{\prod_{i=1}^n p_0(X_i)}\right)^2&\leq \exp\left(\frac{1}{2}B_nn^2r_n^4\max_{1\leq k\leq B_n}\left(\int \phi_{n,k}^2/p_0d x\right)^2\right)\\
&\leq \exp\left(\frac{1}{2c^2}B_nn^2r_n^4\right).
\end{align*}
In order to ensure (\ref{l2b}), it suffices to have
\begin{equation}\label{condition}
B_n^{1/2}nr_n^2=O(1).
\end{equation}
Therefore, given $\Delta_n=O\left(n^{-\frac{2s}{4s+d}}\right)$, once we can find proper $r_n$, $B_n$ and $\phi_{n,1},\cdots,\phi_{n,B_n}$ such that $p_{n,\bm{a}_n}\in H_1^{\rm GOF}(\Delta_n;s)^*$ for all $\bm{a}_n$ and (\ref{condition}) holds, the proof is finished.

Let $b_n=B_n^{1/d}$, $\phi$ be an infinitely differentiable function supported on $[0,1]^d$ that is orthogonal to $\mathds{1}_{\{x\in [0,1]^d\}}$ in $L_2$, and for each $x_{n,k}\in\{0,1,\cdots,b_n-1\}^{\otimes d}$, let
$$
\phi_{n,k}(x)=\frac{b_n^{d/2}}{\|\phi\|_{L_2}}\phi(b_nx-x_{n,k}),\quad \forall\ x\in\mathbb{R}^d.
$$
Then all $\phi_{n,k}$'s are supported on $[0,1]^d$ and
\begin{align*}
&\langle \phi_{n,k},1\rangle_{L_2}=\frac{b_n^{d/2}}{\|\phi\|_{L_2}}\int_{\mathbb{R}^d}\phi(b_nx-x_{n,k})dx=\frac{1}{b_n^{d/2}\|\phi\|_{L_2}}\int_{\mathbb{R}^d}\phi(x)dx=0,\\
&\|\phi_{n,k}\|_{L_2}^2=\frac{b_n^d}{\|\phi\|_{L_2}^2}\int_{[0,1/b_n]^d}\phi^2(b_nx)d x=1,\\
&\|\phi_{n,k}\|_{\Wcal^{s,2}}^2\leq b_n^{2s}\frac{\|\phi\|_{\Wcal^{s,2}}^2}{\|\phi\|_{L_2}^2}.
\end{align*}

Since for $k\neq k'$, the supports of $\phi_{n,k}$ and $\phi_{n,k'}$ are disjoint,
$$
\|p_{n,\bm{a}_n}-p_0\|_{\infty}=r_nb_n^{d/2}\frac{\|\phi\|_{\infty}}{\|\phi\|_{L_2}},
$$
and
$$
\langle \phi_{n,k},\phi_{n,k'}\rangle_{L_2}=0,\qquad \langle \phi_{n,k},\phi_{n,k'}\rangle_{\Wcal^{s,2}}=0,
$$
from which we immediately obtain 
\begin{align*}
&\|p_{n,\bm{a}_n}-p_0\|_{L_2}^2=r_n^2b_n^d\\
&\|p_{n,\bm{a}_n}-p_0\|_{\Wcal^{s,2}}^2\leq r_n^2b_n^{d+2s}\frac{\|\phi\|_{\Wcal^{s,2}}^2}{\|\phi\|_{L_2}^2}.
\end{align*}
To ensure $p_{n,\bm{a}_n}\in H_1^{\rm GOF}(\Delta_n;s)^*$, it suffices to make
\begin{align}
&r_nb_n^{d/2}\frac{\|\phi\|_{\infty}}{\|\phi\|_{L_2}}\rightarrow 0\ \text{as}\ n\rightarrow \infty,\label{condition2}\\& r_n^2b_n^d=\Delta_n^2,\label{condition3}\\ &r_n^2b_n^{d+2s}\frac{\|\phi\|_{\Wcal^{s,2}}^2}{\|\phi\|_{L_2}^2}\leq M^2.\label{condition4}
\end{align}

Let
$$
b_n=\left\lfloor\left(\frac{M\|\phi\|_{L_2}^2}{\|\phi\|_{\Wcal^{s,2}}}\right)^{1/s}\Delta_n^{-1/s}\right\rfloor,\quad r_n=\frac{\Delta_n}{b_n^{d/2}}.
$$
Then (\ref{condition3}) and (\ref{condition4}) are satisfied. Moreover, given $\Delta_n=O\left(n^{-\frac{2s}{4s+d}}\right)$,
$$
B_n^{1/2}nr_n^2=b_n^{-d/2}n\Delta_n^{2}\lesssim_{d,\phi,M}n\Delta_n^{\frac{4s+d}{2s}}=O(1),
$$
and
$$
r_nb_n^{d/2}\frac{\|\phi\|_{\infty}}{\|\phi\|_{L_2}}\lesssim_{\phi}\Delta_n=o(1)
$$
ensuring both (\ref{condition}) and (\ref{condition2}).

Finally, we show the existence of such $\phi$. Let
$$
\phi_0(x_1)=\begin{cases}
\exp\left(-\frac{1}{1-(4x_1-1)^2}\right) &0<x_1<\frac{1}{2}\\
-\exp\left(-\frac{1}{1-(4x_1-3)^2}\right) &\frac{1}{2}<x_1<1\\
0&\text{otherwise}
\end{cases}.
$$
Then $\phi_0$ is supported on $[0,1]$, infinitely differentiable and orthogonal to the indicator function of $[0,1]$.

Let 
$$
\phi(x)=\prod\limits_{l=1}^{d}\phi_0(x_l),\quad \forall\ x=(x_1,\cdots,x_d)\in\mathbb{R}^d.
$$
Then $\phi$ is supported on $[0,1]^d$, infinitely differentiable and 
$
\langle \phi, 1\rangle_{L_2}=\langle \phi_0,1\rangle_{L_2[0,1]}^d=0.
$	
\end{proof}
\vskip 25pt

\begin{proof}[Proof of Theorem \ref{th:homnull}]
Let $N=m+n$ denote the total sample size. It suffices to prove the result under the assumption that $n/N\rightarrow r\in(0,1)$. 

Note that under $H_0$,
\begin{align*}
\hat{\gamma_{\nu_n}^2}(\PP,\QQ)=&{1\over n(n-1)}\sum_{1\leq i\neq j\leq n} \bar{G}_{\nu_n}(X_i,X_j)+{1\over m(m-1)}\sum_{1\leq i\neq j\leq m} \bar{G}_{\nu_n}(Y_i,Y_j)\\&-{2\over nm}\sum_{1\leq i\leq n}\sum\limits_{1\leq j\leq m} \bar{G}_{\nu_n}(X_i,Y_j).
\end{align*}
Let $n/N=r_n$. Then we have
\begin{align*}
&\hat{\gamma_{\nu_n}^2}(\PP,\QQ)\\=&N^{-2}\left({1\over r_n(r_n-N^{-1})}\sum_{1\leq i\neq j\leq n} \bar{G}_{\nu_n}(X_i,X_j)\right.+\\&\left.{1\over (1-r_n)(1-r_n-N^{-1})}\sum_{1\leq i\neq j\leq m} \bar{G}_{\nu_n}(Y_i,Y_j)-{2\over r_n(1-r_n)}\sum\limits_{1\leq i\leq n}\sum\limits_{1\leq j\leq m} \bar{G}_{\nu_n}(X_i,Y_j)\right).
\end{align*}

Let 
\begin{align*}
\hat{\gamma_{\nu_n}^2}(\PP,\QQ)'=&N^{-2}\left({1\over r^2}\sum_{1\leq i\neq j\leq n} \bar{G}_{\nu_n}(X_i,X_j)+{1\over (1-r)^2}\sum_{1\leq i\neq j\leq m} \bar{G}_{\nu_n}(Y_i,Y_j)\right.\\
&\left.-{2\over r(1-r)}\sum_{1\leq i\leq n}\sum\limits_{1\leq j\leq m} \bar{G}_{\nu_n}(X_i,Y_j)\right).
\end{align*}
As we assume $r_n\rightarrow r$ as $n\rightarrow \infty$, Theorem \ref{th:gofnull} ensures that 
$$
\frac{nm}{\sqrt{2}(n+m)}\left[\EE \bar{G}_{\nu_n}^2(X_1,X_2)\right]^{-\frac{1}{2}}\left(\hat{\gamma_{\nu_n}^2}(\PP,\QQ)-\hat{\gamma_{\nu_n}^2}(\PP,\QQ)'\right)=o_p(1)
$$

A slight adaption of arguments in \cite{hall1984central} suggests that
\begin{align}\label{lc6}
\frac{\EE \bar{G}_{\nu_n}^4(X_1,X_2)}{N^2\EE\bar{G}_{\nu_n}^2(X_1,X_2)}+\frac{\EE \bar{G}_{\nu_n}^2(X_1,X_2)\bar{G}_{\nu_n}^2(X_1,X_3)}{N\EE\bar{G}_{\nu_n}^2(X_1,X_2)}+\frac{\EE H_{\nu_n}^2(X_1,X_2)}{\EE\bar{G}_{\nu_n}^2(X_1,X_2)}\to 0
\end{align}
ensures that
$$
\frac{nm}{\sqrt{2}(n+m)}\left[\EE \bar{G}_{\nu_n}^2(X_1,X_2)\right]^{-\frac{1}{2}}\hat{\gamma_{\nu_n}^2}(\PP,\QQ)'\to_d N(0,1).
$$
Following arguments similar to those in the proof of Theorem \ref{th:gofnull}, given $\nu_n\rightarrow \infty$ and $\nu_n/n^{4/d}\rightarrow 0$, (\ref{lc6}) holds and therefore
$$
\frac{nm}{\sqrt{2}(n+m)}\left[\EE \bar{G}_{\nu_n}^2(X_1,X_2)\right]^{-\frac{1}{2}}\hat{\gamma_{\nu_n}^2}(\PP,\QQ)\to_d N(0,1).
$$

Additionally, based on the same arguments as in the proof of Theorem \ref{th:gofnull},
$$
\hat{s}_{n,m,\nu_n}^2/\EE[\bGnn(X_1,X_2)]^2\to_p 1.
$$
The proof is therefore concluded.
\end{proof}
\vskip 25pt

\begin{proof}[Proof of Theorem \ref{th:hompower}] With slight abuse of notation, we shall write
$$
\bar{G}_{\nu_n}(x,y;\PP,\QQ)=G_{\nu_n}(x,y)-\EE_{Y\sim\QQ} G_{\nu_n}(x,Y)-\EE_{X\sim \PP} G_{\nu_n}(X,y)+\EE_{(X,Y)\sim \PP\otimes\QQ} G_{\nu_n}(X,Y),
$$
We consider the two parts separately.

\paragraph{Part (\ro).} We first verify the consistency of $\Phi_{n,\nu_n,\alpha}^{\mathrm{HOM}}$ with $\nu_n\asymp n^{4/(d+4s)}$ given $\Delta_n\gg n^{-2s/(d+4s)}$.


Observe the following decomposition of $\hat{\gamma_{\nu_n}^2}(\PP,\QQ)$,
$$
\hat{\gamma_{\nu_n}^2}(\PP,\QQ)=\gamma_{\nu_n}^2(\PP,\QQ)+L_{n,\nu_n}^{(1)}+L_{n,\nu_n}^{(2)},
$$
where
\begin{align*}
L_{n,\nu_n}^{(1)}\notag=&\frac{1}{n(n-1)}\sum\limits_{1\leq i\neq j\leq n}\bar{G}_{\nu_n}(X_i,X_j;\PP)-\frac{2}{mn}\sum\limits_{1\leq i\leq n}\sum\limits_{1\leq j\leq m}\bar{G}_{\nu_n}(X_i,Y_j;\PP,\QQ)\\
&+\frac{1}{m(m-1)}\sum\limits_{1\leq i\neq j\leq m}\bar{G}_{\nu_n}(Y_i,Y_j;\QQ)
\end{align*}
and
\begin{align*}
L_{n,\nu_n}^{(2)}=&\frac{2}{n}\sum\limits_{i=1}^n\left(\EE[G_{\nu_n}(X_i,X)|X_i]-\EE G_{\nu_n}(X,X')-\EE[G_{\nu_n}(X_i,Y)|X_i]+\EE G_{\nu_n}(X,Y)\right)\\
&+\frac{2}{m}\sum\limits_{j=1}^m\left(\EE [G_{\nu_n}(Y_j,Y)|Y_j]-\EE G_{\nu_n}(Y,Y')-\EE [G_{\nu_n}(X,Y_j)|Y_j]+\EE G_{\nu_n}(X,Y)\right).\notag
\end{align*}
In order to prove the consistency of $\Phi_{n,\nu_n,\alpha}^{\mathrm{HOM}}$, it suffices to show 
\begin{align}
&\sup\limits_{\substack{p,q\in \Wcal^{s,2}(M)\\ \|p-q\|_{L_2}\geq \Delta_n}}\frac{\EE\left(L_{n,\nu_n}^{(1)}\right)^2+\EE\left(L_{n,\nu_n}^{(2)}\right)^2}{\gamma_{G_{\nu_n}}^4(\PP,\QQ)}\rightarrow 0,\label{lc7}\\
&\inf\limits_{\substack{p,q\in \Wcal^{s,2}(M)\\ \|p-q\|_{L_2}\geq \Delta_n}}\frac{\gamma_{G_{\nu_n}}^2(\PP,\QQ)}{\left(1/n+1/m\right)\sqrt{\EE\left(\hat{s}_{n,m,\nu_n}^2\right)}}\rightarrow \infty,\label{lc8}
\end{align}
as $n\rightarrow \infty$. We now prove (\ref{lc7}) and (\ref{lc8}) with arguments similar to those obtained in the proof of Theorem \ref{th:gofpower}. 

Note that
\begin{align*}
\EE(L_{n,\nu_n}^{(1)})^2\lesssim&\EE\left(\frac{1}{n(n-1)}\sum\limits_{1\leq i\neq j\leq n}\bar{G}_{\nu_n}(X_i,X_j;\PP)\right)^2+\EE\left(\frac{2}{mn}\sum_{1\leq i\leq n}\sum_{1\leq j\leq m}\bar{G}_{\nu_n}(X_i,Y_j;\PP,\QQ)\right)^2\\&+\EE\left(\frac{1}{m(m-1)}\sum\limits_{1\leq i\neq j\leq m}\bar{G}_{\nu_n}(Y_i,Y_j;\QQ)\right)^2\\
\lesssim &\frac{1}{n^2}\EE G_{\nu_n}^2(X_1,X_2)+\frac{1}{m^2}\EE G_{\nu_n}^2(Y_1,Y_2).
\end{align*}
Given $p,q\in \Wcal^{s,2}(M)$,
\begin{align*}
\EE G_{\nu_n}^2(X_1,X_2)
\lesssim_d M^2\nu_n^{-d/2},\quad \EE G_{\nu_n}^2(Y_1,Y_2)\lesssim_d M^2\nu_n^{-d/2}.
\end{align*}
Hence
\begin{align}\label{var1}
\EE(L_{n,\nu_n}^{(1)})^2\lesssim_d M^2\nu_n^{-d/2}\left(\frac{1}{n^2}+\frac{1}{m^2}\right).
\end{align}

Now consider bounding $L_{n,\nu_n}^{(2)}$. Let $f=p-q$. Then we have
\begin{align}\label{var2}
\EE(L_{n,\nu_n}^{(2)})^2\lesssim_d \nu_n^{-\frac{3d}{4}}M\|f\|_{L_2}^2\left(\frac{1}{n}+\frac{1}{m}\right).
\end{align}
Since $\nu_n\asymp n^{4/(4s+d)}\gg \Delta_n^{-2/s}$, Lemma \ref{le:gaussmmd} ensures that for sufficiently large $n$,
$$
\gamma_{G_{\nu_n}}^2(\PP,\QQ)\gtrsim_d \nu_n^{-d/2}\|f\|_{L_2}^2,\quad\forall\ p,q\in \Wcal^{s,2}(M).
$$
This together with (\ref{var1}) and (\ref{var2}) gives
$$
\sup\limits_{\substack{p,q\in \Wcal^{s,2}(M)\\ \|p-q\|_{L_2}\geq \Delta_n}}\frac{\EE\left(L_{n,\nu_n}^{(1)}\right)^2+\EE\left(L_{n,\nu_n}^{(2)}\right)^2}{\gamma_{G_{\nu_n}}^4(\PP,\QQ)}\lesssim_d \frac{M^2\nu_n^{d/2}}{n^2\Delta_n^4}+\frac{M\nu_n^{d/4}}{n\Delta_n^2}\rightarrow 0
$$
as $n\rightarrow \infty$, which proves (\ref{lc7}).

Finally, consider (\ref{lc8}). It follows from
\begin{align*}
\EE\left(\hat{s}_{n,m,\nu_n}^2\right)\leq\ & \EE\max\left\{\left|\tilde{s}_{n,m,\nu_n}^2\right|,1/n^2\right\}\\
\lesssim\ &\max\{\EE G_{\nu_n}^2(X_1,X_2),\EE G_{\nu_n}^2(Y_1,Y_2)\}+1/n^2\\
\lesssim_d &M^2\nu_n^{-d/2}+1/n^2
\end{align*}
that (\ref{lc8}) holds. 
\paragraph{Part (\rt).} Next, we prove that if $\liminf_{n\to\infty}\Delta_nn^{2s/(d+4s)}<\infty$, then there exists some $\alpha\in(0,1)$ such that no asymptotic $\alpha$-level test can be consistent. To prove this, we shall verify that consistency of homogeneity test is harder to achieve than that of goodness-of-fit test. 

Consider an arbitrary $p_0\in \Wcal^{s,2}(M/2)$. It immediately follows 
$$
H_1^{\rm HOM}(\Delta_n; s)\supset \{(p,p_0):\ p\in H_1^{\rm GOF}(\Delta_n;s)\}.
$$
Let $\{\Phi_{n}\}_{n\geq 1}$ be any sequence of asymptotic $\alpha$-level homogeneity tests, where 
$$
\Phi_{n}=\Phi_{n}(X_1,\cdots,X_n,Y_1,\cdots,Y_m).
$$ 
Then if $Y_1,\cdots,Y_m\sim_{\rm iid} P_0$, $\{\Phi_{n}\}_{n\geq 1}$ can also be treated as a sequence of (random) goodness-of-fit tests
$$
\Phi_{n}(X_1,\cdots,X_n,Y_1,\cdots,Y_m)=\tilde{\Phi}_n(X_1,\cdots,X_n)
$$
whose probabilities of type \Ro\ error with respect to $P_0$ are controlled at $\alpha$ asymptotically. Moreover,
$$
{\rm power}\{\Phi_n; H_1^{\rm HOM}(\Delta_n; s)\}\leq {\rm power}\{\tilde{\Phi}_n; H_1^{\rm GOF}(\Delta_n; s)\}
$$

Since $0<c\leq m/n\leq C<\infty$,
Theorem \ref{th:goflower} ensures that there exists some $\alpha\in(0,1)$ such that for any sequence of asymptotic $\alpha$-level tests $\{\Phi_n\}_{n\geq 1}$,
$$
\liminf_{n\to\infty}{\rm power}\{\Phi_n; H_1^{\rm HOM}(\Delta_n; s)\}\leq \liminf_{n\to\infty}{\rm power}\{\tilde{\Phi}_n; H_1^{\rm GOF}(\Delta_n; s)\}<1
$$
given $\liminf_{n\to\infty}\Delta_nn^{2s/(d+4s)}<\infty$.		
\end{proof}
\vskip 25pt


\begin{proof}[Proof of Theorem \ref{th:indnull}]
For brevity, we shall focus on the case when $k=2$ in the rest of the proof. Our argument, however, can be straightforwardly extended to the more general cases. The proof relies on the following decomposition of $\hat{\gamma_{\nu_n}^2}(\PP,\PP^{X^1}\otimes\PP^{X^2})$ under $H_0^{\rm IND}$:
\begin{align*}
\hat{\gamma^2_{\nu_n}}(\PP,\PP^{X^1}\otimes\PP^{X^2})=\frac{1}{n(n-1)}\sum_{1\leq i\neq j\leq n}G_{\nu_n}^*(X_i,X_j)+R_n,
\end{align*}
where
\begin{align*}
G_{\nu_n}^*(x,y)=\bar{G}_{\nu_n}(x,y)-\sum\limits_{\substack{1\leq j\leq 2}}g_j(x^j,y)-\sum\limits_{\substack{1\leq j\leq 2}}g_j(y^j,x)+\sum\limits_{\substack{1\leq j_1,j_2\leq 2}}g_{j_1,j_2}(x^{j_1},y^{j_2})
\end{align*}
and the remainder $R_n$ satisfies
$$\EE(R_n)^2\lesssim \EE G_{2\nu}(X_1,X_2)/n^3\lesssim_d \|p\|_{L_2}^2\nu_n^{-d/2}/n^3.$$ See Appendix \ref{sec:HSIC_decomp} for more details.


Moreover, borrowing arguments in the proof of Lemma \ref{le:var}, we obtain
\begin{align*}
&\EE(G_{\nu_n}^*(X_1,X_2)-\bar{G}_{\nu_n}(X_1,X_2))^2\\
\lesssim &\sum\limits_{1\leq j\leq 2}\EE\Big(g_j(X_1^j,X_2)\Big)^2+\sum\limits_{\substack{1\leq j_1,j_2\leq 2}}\EE \Big(g_{j_1,j_2}(X_1^{j_1},X_2^{j_2})\Big)^2\\
\leq &\sum\limits_{1\leq j_1\neq j_2\leq 2}\EE G_{2\nu_n}(X_1^{j_1},X_2^{j_1})\cdot\EE\left\{\EE\left[ G_{\nu_n}(X_1^{j_2},X_2^{j_2})\Big|X_1^{j_2}\right]\right\}^2+\\
&\sum\limits_{1\leq j_1\neq j_2\leq 2}\EE G_{2\nu_n}(X_1^{j_1},X_2^{j_1})[\EE G_{\nu_n}(X_1^{j_2},X_2^{j_2})]^2+\\
&\ 2\EE\left\{\EE\left[ G_{\nu_n}(X_1^{1},X_2^{1})\Big|X_1^{1}\right]\right\}^2\EE\left\{\EE\left[ G_{\nu_n}(X_1^{2},X_2^2)\Big|X_1^{2}\right]\right\}^2\\
\lesssim_d &\ \nu_n^{-d_1/2-3d_2/4}\|p_1\|_{L_2}^2\|p_2\|_{L_2}^3+\nu_n^{-3d_1/4-d_2/2}\|p_1\|_{L_2}^3\|p_2\|_{L_2}^2
\end{align*}
Together with the fact that
$$
(2\nu_n/\pi)^{d/2}\EE\bar{G}_{\nu_n}^2(X_1,X_2)\to \|p\|_{L_2}^2
$$
as $\nu_n\to \infty$, we conclude that
$$
\hat{\gamma_{\nu_n}^2}(\PP,\PP^{X^1}\otimes\PP^{X^2})=D(\nu_n)+o_p\left(\sqrt{\EE D^2(\nu_n)}\right),
$$
where
$$
D(\nu_n)=\frac{1}{n(n-1)}\sum\limits_{1\leq i\neq j\leq n}\bar{G}_{\nu_n}(X_i,X_j).
$$

Applying arguments similar to those in the proofs of Theorem \ref{th:gofnull} and \ref{th:homnull}, we have
$$
\frac{D(\nu_n)}{\sqrt{\EE D^2(\nu_n)}}\to_d N(0,1).
$$

Since 
$$
\EE D^2(\nu_n)=\frac{2}{n(n-1)}\EE [\bGnn(X_1,X_2)]^2\quad \text{and}\quad
\EE[\bGnn(X_1,X_2)]^2/\EE[G_{\nu_n}^*(X_1,X_2)]^2\to 1,
$$
it remains to prove
$$
\hat{s}_{n,\nu_n}^2/\EE[G_{\nu_n}^*(X_1,X_2)]^2\to_p 1,
$$
which immediately follows by observing
$$
\tilde{s}_{n,\nu_n}^2/\EE[G_{\nu_n}^*(X_1,X_2)]^2=\prod\limits_{j=1}^2\tilde{s}_{n,j,\nu_n}^2/\EE[\bGnn(X_1^j,X_2^j)]^2\to_p 1
$$
and $1/n^2=o(\EE[G_{\nu_n}^*(X_1,X_2)]^2)$.
%
The proof is therefore concluded.
\end{proof}
\vskip 25pt

\begin{proof}[Proof of Theorem \ref{th:indpower}] We prove the two parts separately.
\paragraph{Part (\ro).} The proof of consistency of $\Phi^{\rm IND}_{n,\nu_n,\alpha}$ is very similar to its counterpart in the proof of Theorem \ref{th:hompower}. It sufficies to show
\begin{align}
&\sup\limits_{p\in H_1^{\rm IND}(\Delta_n,s)}\frac{\var(\hat{\gamma_{\nu_n}^2}(\PP,\PP^{X^1}\otimes \PP^{X^2}))}{\gamma_{\nu_n}^4(\PP,\PP^{X^1}\otimes \PP^{X^2})}\rightarrow 0,\label{lc12}\\
&\inf\limits_{p\in H_1^{\rm IND}(\Delta_n,s)}\frac{n\gamma_{\nu_n}^2(\PP,\PP^{X^1}\otimes \PP^{X^2})}{
    \EE\left(\hat{s}_{n,\nu_n}\right)}\rightarrow \infty,\label{lc13}
\end{align}
as $n\rightarrow \infty$.

We begin with (\ref{lc12}). 
Let $f=p-p_1\otimes p_2$. Lemma \ref{le:gaussmmd} then implies that there exists $C=C(s,M)>0$ such that
$$
\gamma_\nu^2(\PP,\PP^{X^1}\otimes\PP^{X^2})\asymp_d \nu^{-d/2}\|f\|_{L_2}^2
$$
for $\nu\geq C\|f\|_{L_2}^{-2/s}$, which is satisfied by all $p\in H_1^{\rm IND}(\Delta_n,s)$ given $\nu=\nu_n$ and $\lim\limits_{n\rightarrow \infty}\Delta_nn^{2s\over 4s+d}=\infty$. On the other hand, we can still do the decomposition of $\hat{\gamma_{\nu_n}^2}(\PP,\PP^{X^1}\otimes \PP^{X^2})$ as in Appendix \ref{sec:HSIC_decomp}. We follow the same notations here.

Under the alternative hypothesis, the ``first order'' term
\begin{align*}
&D_1(\nu_n)\\=&\frac{2}{n}\sum\limits_{1\leq i\leq n}\Big(\EE_{X_i,X\sim_{\rm  iid}\PP}[G_{\nu_n}(X_i,X)|X_i]-\EE_{X,X'\sim_{\rm iid}\PP} G_{\nu_n}(X,X')\Big) \\&-\frac{2}{n}\sum\limits_{1\leq i\leq n}\Big(\EE_{X_i\sim \PP,Y\sim \PP^{X^1}\otimes \PP^{X^2}}[G_{\nu_n}(X_i,Y)|X_i]-\EE_{X\sim\PP,Y\sim \PP^{X^1}\otimes \PP^{X^2}} G_{\nu_n}(X,Y)\Big)\\
&-\sum\limits_{1\leq j\leq 2}\left(\frac{2}{n}\sum\limits_{1\leq i\leq n}\left(\EE_{X_i\sim \PP^{X^1}\otimes\PP^{X^2},X\sim \PP} [G_{\nu_n}(X_i,X)|X_i^j]-\EE_{X\sim \PP,Y\sim \PP^{X^1}\otimes\PP^{X^2}} G_{\nu_n}(X,Y)\right)\right)\\&+\sum\limits_{1\leq j\leq 2}\left(\frac{2}{n}\sum\limits_{1\leq i\leq n}\left(\EE_{X_i,Y\sim_{\rm iid} \PP^{X^1}\otimes\PP^{X^2}} [G_{\nu_n}(X_i,Y)|X_i^j]-\EE_{Y,Y'\sim_{\rm iid} \PP^{X^1}\otimes\PP^{X^2}} G_{\nu_n}(Y,Y')\right)\right)
\end{align*}
no longer vanish, but based on arguments similar to those in the proof of Theorem \ref{th:gofpower},
$$
\EE D_1^2(\nu_n)\lesssim_d Mn^{-1}\nu_n^{-3d/4}\|f\|_{L_2}^2.
$$
Moreover, the ``second order'' term $D_2(\nu_n)$ is not solely $\sum\limits_{1\leq i\neq j\leq n}G_{\nu_n}^*(X_i,X_j)/(n(n-1))$, but
we still have
$$
\EE D_2^2(\nu_n)\lesssim n^{-2}\max\{\EE G_{2\nu_n}(X_1,X_2),\EE G_{2\nu_n}(X_1^1,X_2^1)\EE G_{2\nu_n}(X_1^2,X_2^2)\}\lesssim_d M^2n^{-2}\nu_n^{-d/2}. 
$$
Similarly, define the third order term $D_3(\nu_n)$ and the fourth order term $D_4(\nu_n)$ as the aggregation of all $3$-variate centered components
and the aggregation of all $4$-variate 
centered components in $\hat{\gamma_{\nu_n}^2}(\PP,\PP^{X^1}\otimes \PP^{X^2})$ respectively, which together constitue $R_n$. Then we have
$$
\EE D_3^2(\nu_n)\lesssim_d M^2n^{-3}\nu_n^{-d/2},\quad \EE D_4^2(\nu_n)\lesssim_d M^2n^{-4}\nu_n^{-d/2}.
$$

Hence we finally obtain
$$
\hat{\gamma_{\nu_n}^2}(\PP,\PP^{X^1}\otimes \PP^{X^2})=\gamma_{\nu_n}^2(\PP,\PP^{X^1}\otimes \PP^{X^2})+\sum\limits_{l=1}^4D_l(\nu_n)
$$
and
$$
\var\Big(\hat{\gamma_{\nu_n}^2}(\PP,\PP^{X^1}\otimes \PP^{X^2})\Big)=\sum\limits_{l=1}^4\EE D_l^2(\nu_n)\lesssim _d Mn^{-1}\nu_n^{-3d/4}\|f\|_{L_2}^2+M^2n^{-2}\nu_n^{-d/2}
$$
which proves (\ref{lc12}).

Now consider (\ref{lc13}). Since
$$
\hat{s}_{n,\nu_n}\leq \max\left\{\prod\limits_{j=1}^2\sqrt{\left|\tilde{s}_{n,j,\nu_n}^2\right|},1/n\right\},
$$
we have
$$
\EE\left(\hat{s}_{n,\nu_n}\right)\leq \prod\limits_{j=1}^2\sqrt{\EE\left|\tilde{s}_{n,j,\nu_n}^2\right|}+1/n,
$$
where
$$
\prod\limits_{j=1}^2\EE\left|\tilde{s}_{n,j,\nu_n}^2\right|\lesssim \prod\limits_{j=1}^2\EE G_{2\nu_n}(X_1^j,X_2^j)=\EE_{Y_1,Y_2\sim_{\rm iid} \PP^{X^1}\otimes\PP^{X^2}} G_{2\nu_n}(Y_1,Y_2)\lesssim_d M^2\nu_n^{-d/2}.
$$
Therefore (\ref{lc13}) holds.

\paragraph{Part (\rt).} Then we verify that $n^{2s/(d+4s)}\Delta_n\to \infty$ is also the necessary condition for the existence of consistent asymptotic $\alpha$-level tests for any $\alpha\in(0,1)$. Similarly to the proof of Theorem \ref{th:hompower}, the idea is to relate the existence of consistent independence test to the existence of consistent goodness-of-fit test.


Let $p_{j,0}\in \Wcal^{s,2}\left(M_j/\sqrt{2}\right)$ be density on $\RR^{d_j}$ for $j=1,2$ and $p_0$ be the product of $p_{1,0}$ and $p_{2,0}$, \ie,
$$
p_0(x^1,x^2)=p_{1,0}(x^1)p_{2,0}(x^2),\quad\forall\ x^1\in\RR^{d_1},x^2\in\RR^{d_2}.
$$
Hence $p_0\in \Wcal^{s,2}(M/2)$.

Let 
$$
H_1^{\rm GOF}(\Delta_n;s)':=\{p:\ p\in \Wcal^{s,2}(M), \ p_1=p_{1,0},\ p_2=p_{2,0}, \|p-p_0\|_{L_2}\geq \Delta_n\}.
$$
We immediately have
$$
H_1^{\rm IND}(\Delta_n; s)\supset H_1^{\rm GOF}(\Delta_n;s)'
$$

Let $\{\Phi_{n}\}_{n\geq 1}$ be any sequence of asymptotic $\alpha$-level independence tests, where 
$$
\Phi_{n}=\Phi_{n}(X_1,\cdots,X_n).
$$ 
Then $\{\Phi_{n}\}_{n\geq 1}$ can also be treated as a sequence of asymptotic $\alpha$-level goodness-of-fit tests with the null density being $p_0$. Moreover,
$$
{\rm power}\{\Phi_n; H_1^{\rm IND}(\Delta_n; s)\}\leq {\rm power}\{\Phi_n; H_1^{\rm GOF}(\Delta_n;s)'\}.
$$

It remains to show that given $\liminf_{n\to\infty}n^{2s/(d+4s)}\Delta_n< \infty$, there exists some $\alpha\in(0,1)$ such that
\begin{align*}
\liminf_{n\to\infty}{\rm power}\{\Phi_n; H_1^{\rm GOF}(\Delta_n;s)'\}<1,
\end{align*}
which cannot be directly obtained from Theorem \ref{th:goflower} because of the additional constraints 
\begin{align}\label{constraint}
p_1=p_{1,0},\quad p_2=p_{2,0}
\end{align}
in $H_1^{\rm GOF}(\Delta_n;s)'$.

However, by modifying the proof of Theorem \ref{th:goflower}, we only need to further require each $p_{n,\bm{a}_n}$ in the proof of Theorem \ref{th:goflower} satisfying (\ref{constraint}), or equivalently,
$$
\int_{\RR^{d_2}} (p-p_0)(x^1,x^2)d x^2=0,\quad \int_{\RR^{d_1}} (p-p_0)(x^1,x^2)d x^1=0.
$$

Recall that each $p_{n,\bm{a}_n}=p_0+r_n\sum\limits_{k=1}^{B_n}a_{n,k}\phi_{n,k}$, where
$$
\phi_{n,k}(x)=\frac{b_n^{d/2}}{\|\phi\|_{L_2}}\phi(b_nx-x_{n,k}).
$$
Write $x_{n,k}=(x_{n,k}^1,x_{n,k}^2)\in \RR^{d_1}\times \RR^{d_2}$. Since $\phi$ can be decomposed as 
$\phi(x^1,x^2)=\phi_1(x^1)\phi_2(x^2)$,
we have 
$$\phi_{n,k}(x)=\frac{b_n^{d/2}}{\|\phi\|_{L_2}}\phi_1(b_nx^1-x_{n,k}^1)\phi_2(b_nx^2-x_{n,k}^2)$$
Hence
\begin{align*}
\int_{\RR^{d_2}} (p_{n,\bm{a}_n}-p_0)(x^1,x^2)dx^2=&r_n\sum\limits_{k=1}^{B_n}a_{n,k}\int_{\RR^{d_2}} \phi_{n,k}(x^1,x^2)dx^2\\
=&r_n\sum\limits_{k=1}^{B_n}a_{n,k}\frac{b_n^{d/2}}{\|\phi\|_{L_2}}\cdot \phi_1(b_nx^1-x_{n,k}^1)\cdot\frac{1}{b_n^{d_2}}\int_{\RR^{d_2}} \phi_2(x^2)dx^2\\
=&0
\end{align*}
since $\int_{\RR^{d_2}}\phi_2(x^2)dx^2=0.$ Similarly, $\int_{\RR^{d_1}} (p_{n,\bm{a}_n}-p_0)(x^1,x^2)dx^1=0$. The proof is therefore finished.	
\end{proof}
\vskip 25pt

\begin{proof}[Proof of Theorem \ref{th:gofadapt}]
The proof of Theorem \ref{th:gofadapt} consists of two steps. First, we bound $q_{n,\alpha}^{\rm GOF}$. To be more specific, we show that there exists $C=C(d)>0$ such that $$q_{n,\alpha}^{\rm GOF}\leq C(d)\log\log n$$ for sufficiently large $n$, which holds if 
\begin{align}\label{eq:gofadapt1}
\lim\limits_{n\rightarrow \infty}P(T_n^{\rm GOF (adapt)}\geq C(d)\log\log n)=0
\end{align}
under $H_0^{\rm GOF}$. Second, we show that there exists $c>0$ such that 
$$\liminf_{n\to\infty} \Delta_{n,s}(n/\log\log n)^{2s/(d+4s)}>c$$ 
ensures
\begin{align}\label{eq:gofadapt2}
\inf_{p \in H_1^{\rm GOF(adapt)}(\Delta_{n,s}: s\ge d/4)}P(T_n^{\rm GOF (adapt)}\geq C(d)\log\log n)\to 1
\end{align}
as $n\to \infty$.

\paragraph{Verifying (\ref{eq:gofadapt1}).}
In order to prove (\ref{eq:gofadapt1}), we first show the following two lemmas. 
The first lemma suggests that $\hat{s}_{n,\nu_n}^2$
is a consistent estimator of $\EE \bGnn^2(X_1,X_2)$ uniformly over all $\nu_n\in[1,n^{2/d}]$. Recall we have shown in the proof of Theorem \ref{th:gofnull} that for $\nu_n$ increasing at a proper rate, 
$$
\hat{s}_{n,\nu_n}^2/\EE[\bGnn(X_1,X_2)]^2\to_p1.
$$
Hence the first lemma is a uniform version of such result.

\begin{lemma}\label{consistent-est-unif}
	We have that $\hat{s}_{n,\nu_n}^2/\EE[\bGnn(X_1,X_2)]^2$ converges to $1$ uniformly over $\nu_n\in[1,n^{2/d}]$, \ie,
	$$
	\sup\limits_{1\leq \nu_n\leq n^{2/d}}\left|\hat{s}_{n,\nu_n}^2/\EE[\bGnn(X_1,X_2)]^2-1\right|=o_p(1).
	$$
\end{lemma}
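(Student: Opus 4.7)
The plan is to combine pointwise concentration on a fine grid with an oscillation bound between grid points that exploits the monotonicity of $\nu\mapsto G_\nu(x,y)=e^{-\nu\|x-y\|^2}$ in $\nu$. First I would dispose of the truncation at $1/n^2$: the arguments in the proof of Theorem \ref{th:gofnull} establish $\EE[\bGnn(X_1,X_2)]^2\asymp_{d,p_0}\nu_n^{-d/2}\ge n^{-1}$ for every $\nu_n\in[1,n^{2/d}]$, so the floor in $\hat{s}_{n,\nu_n}^2$ is asymptotically inactive and it is enough to prove the uniform statement for $\tilde{s}_{n,\nu_n}^2/\EE[\bGnn(X_1,X_2)]^2$.

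Next I would introduce a geometric grid $\nu^{(k)}=(1+\delta_n)^k$ on $[1,n^{2/d}]$ with $K+1\asymp \delta_n^{-1}\log n$ points, for some $\delta_n\to 0$ to be specified. The variance computation from the proof of Theorem \ref{th:gofnull} gives
\[
\frac{\var(\tilde{s}_{n,\nu^{(k)}}^2)}{(\EE[\bar{G}_{\nu^{(k)}}(X_1,X_2)]^2)^2}\lesssim_{d,p_0} n^{-2}(\nu^{(k)})^{d/2}+n^{-1}(\nu^{(k)})^{d/4}+n^{-1}\lesssim n^{-1/2}
\]
uniformly over $k$, so Chebyshev's inequality together with a union bound handles the pointwise deviations at all $K$ grid points simultaneously provided $K=o(n^{1/2})$, i.e., $\delta_n\gg n^{-1/2}\log n$.

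The in-window oscillation I would control via the decomposition $\tilde{s}_{n,\nu}^2=U_1(\nu)-2U_2(\nu)+U_3(\nu)$ into the three $U$-statistics of its definition. For fixed data $\nu\mapsto G_\nu(x,y)$ is monotonically decreasing, and this monotonicity passes to each $U_i(\nu)$, whence the oscillation on $[\nu^{(k)},\nu^{(k+1)}]$ is dominated pathwise by $\sum_{i=1}^3|U_i(\nu^{(k)})-U_i(\nu^{(k+1)})|$. Each increment is a $U$-statistic with nonnegative kernel of sup-norm $O(\delta_n)$ whose expectation, by direct differentiation of $\EE G_{a\nu}$ in $\nu$ (alternatively via the Fourier representation in Lemma \ref{le:gausskernel}), is of order $\delta_n\cdot \EE U_i(\nu^{(k)})$. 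A Chebyshev estimate for the increment then gives $P(|U_i(\nu^{(k)})-U_i(\nu^{(k+1)})|\gtrsim \epsilon\EE[\bar{G}_{\nu^{(k)}}(X_1,X_2)]^2)\lesssim \delta_n^2/\epsilon^2$, and a union bound over the $K$ windows yields $o(1)$ provided $K\delta_n^2=\delta_n\log n\to 0$.

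The principal obstacle is balancing the two competing demands on $\delta_n$: it must exceed $n^{-1/2}\log n$ so that the pointwise Chebyshev union bound is effective, yet satisfy $\delta_n\log n\to 0$ so that the windowed-oscillation union bound is negligible. Any $\delta_n$ with $n^{-1/2}\log n\ll \delta_n\ll 1/\log n$ (for instance $\delta_n=n^{-1/4}$) accommodates both. The key structural input is the monotonicity of $G_\nu$ in $\nu$, which converts a continuum supremum into a finite difference at window endpoints and thereby sidesteps any appeal to empirical process machinery.
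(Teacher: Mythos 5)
Your overall architecture (truncation is inactive; geometric grid; Chebyshev at grid points; monotonicity of $\nu\mapsto G_\nu$ to control within windows) matches the paper's, but your handling of the in-window oscillation has a gap that the paper deliberately avoids. You bound the pathwise oscillation on $[\nu^{(k)},\nu^{(k+1)}]$ by the \emph{random} increment $U_i(\nu^{(k)})-U_i(\nu^{(k+1)})$ and assert a Chebyshev bound $\lesssim\delta_n^2/\epsilon^2$ from the kernel's sup-norm $O(\delta_n)$ and mean $O(\delta_n\nu^{-d/2})$. That inference does not go through: for a nonnegative $U$-statistic $U_h$, the bound $\var(U_h)\le\|h\|_\infty\,\EE U_h\lesssim\delta_n^2\nu^{-d/2}$ combined with the threshold $\epsilon\,\EE[\bar G_\nu^2]\asymp\epsilon\nu^{-d/2}$ gives Chebyshev probability $\lesssim\delta_n^2\nu^{d/2}/\epsilon^2$, which is as large as $n\delta_n^2/\epsilon^2$ at $\nu=n^{2/d}$; the union bound over $K\asymp\delta_n^{-1}\log n$ windows then diverges. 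To recover a usable rate you must exploit the Hoeffding decomposition $\var(U_h)\lesssim n^{-1}\sigma_1^2+n^{-2}\sigma_2^2$ and, crucially, show $\sigma_1^2=\EE\{\EE[h(X_1,X_2)\mid X_1]\}^2\lesssim\delta_n\nu^{-3d/4}$ (or, with more work exploiting $h=G_{2\nu^{(k)}}-G_{2\nu^{(k+1)}}$, $\lesssim\delta_n^2\nu^{-3d/4}$). This requires a separate argument, e.g.\ the Cauchy--Schwarz bound $\sup_x\EE[h(x,X_2)]\le\|p_0\|_{L_2}\|h(x,\cdot)\|_{L_2}\lesssim\|p_0\|_{L_2}\,\delta_n\nu^{-d/4}$, and none of it is in your sketch.

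The paper sidesteps the issue entirely: since both $U_i(\cdot)$ and $\EE U_i(\cdot)$ are monotone decreasing, for $\nu\in[u_{a-1},u_a]$ one has the \emph{sandwich}
$$
\bigl|U_i(\nu)-\EE U_i(\nu)\bigr|\le\max_{b\in\{a-1,a\}}\bigl|U_i(u_b)-\EE U_i(u_b)\bigr|+\bigl(\EE U_i(u_{a-1})-\EE U_i(u_a)\bigr),
$$
so the in-window error is absorbed into the grid-point deviations you have already Chebyshev-controlled, plus a purely \emph{deterministic} bias. No concentration inequality for the increment is needed. The paper then controls the bias by observing that $\nu\mapsto\nu^{d/2}\EE G_{2\nu}$ is continuous on $[1,\infty)$ and converges as $\nu\to\infty$, so it is uniformly continuous on the compactified ray; hence for any $\varepsilon>0$ a \emph{fixed} ratio $r>1$ (not $r=1+\delta_n\to1$) makes the bias $<\varepsilon$, after which $n\to\infty$ is taken and the summed-variance Chebyshev bound $\sum_a u_a^d\var(\hat\EE_n G_{2u_a})\lesssim n^{-1}u_A^{d/4}+n^{-2}u_A^{d/2}\to0$ handles the $A\asymp\log n$ grid points without any constraint linking $r$ to $n$. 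Your moving grid $\delta_n=n^{-1/4}$ can be made to work, but the double use of monotonicity (on both the statistic and its mean) is the simplification you missed, and without it the claimed increment Chebyshev rate needs to be re-derived from the $U$-statistic variance structure.
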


We defer the proof of Lemma \ref{consistent-est-unif} to the appendix. Note that
\begin{align*}
T_n^{\rm GOF (adapt)}=&\sup\limits_{1\leq \nu_n\leq n^{2/d}}\frac{n\hat{\gamma_{\nu_n}^2}(\PP,\PP_0)}{\sqrt{2\EE [\bGnn(X_1,X_2)]^2}}\cdot\sqrt{\EE [\bGnn(X_1,X_2)]^2/\hat{s}_{n,\nu_n}^2}\\\leq &\sup\limits_{1\leq \nu_n\leq n^{2/d}}\left|\frac{n\hat{\gamma_{\nu_n}^2}(\PP,\PP_0)}{\sqrt{2\EE [\bGnn(X_1,X_2)]^2}}\right|\cdot\sup\limits_{1\leq \nu_n\leq n^{2/d}}\sqrt{\EE [\bGnn(X_1,X_2)]^2/\hat{s}_{n,\nu_n}^2}.
\end{align*}
Lemma \ref{consistent-est-unif} first ensures that
$$
\sup\limits_{1\leq \nu_n\leq n^{2/d}}\sqrt{\EE [\bGnn(X_1,X_2)]^2/\hat{s}_{n,\nu_n}^2}=1+o_p(1).
$$	
It therefore suffices to show that under $H_0^{\rm GOF}$,
$$
\widetilde{T}_n^{\rm GOF (adapt)}:=\sup\limits_{1\leq \nu_n\leq n^{2/d}}\left|\frac{n\hat{\gamma_{\nu_n}^2}(\PP,\PP_0)}{\sqrt{2\EE [\bGnn(X_1,X_2)]^2}}\right|
$$
is also of order $\log\log n$. This is the crux of our argument yet its proof is lengthy. For brevity, we shall state it as a lemma here and defer its proof to the appendix.
\begin{lemma}\label{at4}
	There exists $C=C(d)>0$ such that
	$$
	\lim\limits_{n\rightarrow \infty}P\left(\widetilde{T}_n^{\rm GOF (adapt)}\geq C\log\log n\right)=0
	$$
	under $H_0^{\rm GOF}$.
\end{lemma}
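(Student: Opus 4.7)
The plan is to reduce the supremum over the continuum $\nu_n\in[1,n^{2/d}]$ to a maximum over a finite grid plus a continuity correction, and then apply exponential concentration inequalities for degenerate second-order $U$-statistics on each piece. The key inputs are that under $H_0^{\rm GOF}$ the kernel $\bar{G}_{\nu_n}$ is completely degenerate with $\|\bar{G}_{\nu_n}\|_\infty\le 4$ and $\sigma^2(\nu_n):=\EE\bar{G}_{\nu_n}^2(X_1,X_2)\asymp_d \nu_n^{-d/2}$.

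First I would discretize with the dyadic grid $\nu_j=2^j$, $j=0,\ldots,J$, where $J=\lceil(2/d)\log_2 n\rceil=O(\log n)$. At each anchor, the normalized canonical $U$-statistic
$$
T_n(\nu_j):=\frac{n\,\hat{\gamma_{\nu_j}^2}(\PP,\PP_0)}{\sqrt{2}\,\sigma(\nu_j)}
$$
admits a Bernstein-type tail bound for degenerate $U$-statistics of order two (Houdre--Reynaud-Bouret, Adamczak) of the form $P(|T_n(\nu_j)|\ge\tau)\le C\exp(-c\min(\tau^2,\tau\sqrt{n}\,\sigma(\nu_j)))$. Since $\nu_j\le n^{2/d}$ gives $\sqrt{n}\,\sigma(\nu_j)\asymp_d n^{1/2}\nu_j^{-d/4}\ge\mathrm{const}$, choosing $\tau=C_0\log\log n$ with $C_0$ sufficiently large yields $P(|T_n(\nu_j)|\ge C_0\log\log n)\le(\log n)^{-\kappa}$ for some $\kappa>1$. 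A union bound over the $O(\log n)$ anchors gives $P(\max_j|T_n(\nu_j)|\ge C_0\log\log n)\le O((\log n)^{1-\kappa})\to 0$.

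Next I would control the oscillation of $T_n(\nu)$ inside each dyadic block $[\nu_j,\nu_{j+1}]$. Using the semigroup identity $G_\nu(x,y)\,G_{\nu'}(x,y)=G_{\nu+\nu'}(x,y)$, a direct calculation (of the type already carried out in the proof of Theorem \ref{th:gofnull}) shows that the normalized kernel $K_\nu:=\bar{G}_\nu/\sigma(\nu)$ satisfies
$$
\EE(K_\nu-K_{\nu'})^2\asymp_d\big|\log(\nu/\nu')\big|^2\qquad\text{for }\nu,\nu'\in[\nu_j,2\nu_j],
$$
so the induced $L_2$-pseudometric $\rho_n$ has covering number $N(\varepsilon,[\nu_j,2\nu_j],\rho_n)\lesssim_d 1/\varepsilon$ on a set of $\rho_n$-diameter $O(1)$. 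Generic chaining and Dudley's bound for canonical $U$-processes (Arcones--Gine, de la Pena--Gine) then give $\EE\sup_{\nu\in[\nu_j,2\nu_j]}|T_n(\nu)-T_n(\nu_j)|=O(1)$, and a Talagrand-type concentration inequality for degenerate $U$-processes (Adamczak 2006) yields $P\bigl(\sup_{\nu\in[\nu_j,2\nu_j]}|T_n(\nu)-T_n(\nu_j)|\ge C_0\log\log n\bigr)\le(\log n)^{-\kappa}$. Another union bound over the $O(\log n)$ blocks completes the argument.

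The main obstacle will be this continuity step, since the normalized kernels $K_\nu$ are not uniformly bounded ($\|K_\nu\|_\infty\lesssim_d\nu^{d/4}$ grows with $\nu$), which complicates the direct application of Talagrand-type inequalities that typically assume uniform boundedness. If the entropy-integral route proves delicate, a concrete fallback is to impose a finer sub-grid $\nu_j^{(\ell)}=\nu_j(1+n^{-\alpha})^\ell$ inside each dyadic block and apply Bernstein to each difference $T_n(\nu_j^{(\ell+1)})-T_n(\nu_j^{(\ell)})$ (whose variance is $\lesssim n^{-2\alpha}$ by the computation above); the deterministic Lipschitz estimate $|\partial_{\log\nu}G_\nu(x,y)|\le 1/e$ then interpolates inside sub-cells, and taking $\alpha$ large enough makes the $O(n^\alpha\log n)$ union bound negligible against the sub-Gaussian tail of each difference.
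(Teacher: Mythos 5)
Your overall template---discretize in $\nu$, apply Bernstein for degenerate $U$-statistics, and chain the increments---is the same one the paper uses (Arcones--Giné plus Talagrand's generic chaining), so the spirit is right. But there is a concrete gap in the anchor step, and the Bernstein bound you write down is not the one that actually holds.

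For a degenerate second-order $U$-statistic with normalized kernel $\bar{K}_\nu$ (so $\|\bar{K}_\nu\|_{L_2}\asymp 1$), the Arcones--Giné inequality (Proposition 2.3(c), which is what the paper invokes in its display \eqref{bern1}) gives a tail of the form
$$
P\left(\left|\tfrac{1}{n-1}\sum_{i\ne j}\bar{K}_\nu(X_i,X_j)\right|\ge t\right)\;\lesssim\;\exp\left(-c\min\left\{\frac{t}{\|\bar{K}_\nu\|_{L_2}},\;\Big(\frac{\sqrt{n}\,t}{\|\bar{K}_\nu\|_{L_\infty}}\Big)^{2/3}\right\}\right),
$$
not $\exp(-c\min(\tau^2,\tau\sqrt{n}\,\sigma(\nu)))$. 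The crucial feature is the $2/3$-exponent regime driven by $\|\bar{K}_\nu\|_{L_\infty}\asymp_d\nu^{d/4}$. The crossover between the subexponential and the $2/3$ regime occurs at $t\asymp n/\|\bar{K}_\nu\|_{L_\infty}^2\asymp n\nu^{-d/2}$. At the upper endpoint $\nu\asymp n^{2/d}$ this crossover is at $t\asymp 1$, so for $\tau=C_0\log\log n$ the anchor tail is $\exp\!\big(-c(C_0\log\log n)^{2/3}\big)$. A union bound over your $O(\log n)$ dyadic anchors then yields $O(\log n)\cdot\exp\!\big(-c(\log\log n)^{2/3}\big)$, which \emph{diverges}, since $(\log\log n)^{2/3}=o(\log\log n)$. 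You correctly flag the unboundedness $\|K_\nu\|_\infty\lesssim\nu^{d/4}$ as a difficulty in the chaining-within-block step, but it is in fact the anchor step (and the union bound over anchors) that breaks; your finer-sub-grid fallback only addresses the within-block increments and does not repair the anchor tail.

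The paper sidesteps this by not union-bounding at all: it applies a single generic-chaining inequality over the whole interval $[1,n^{2/d}]$ (Theorem 2.2.28 of Talagrand, together with the mixed-tail Bernstein bound above) to obtain
$$
P\!\left(\sup_{1\le\nu\le n^{2/d}}\Big|\tfrac{1}{n-1}\sum_{i\ne j}\bar{K}_\nu(X_i,X_j)\Big|\ge C\Big(\tfrac{\gamma_{2/3}(d_1)}{\sqrt n}\,t+\gamma_1(d_2)+D_2\,t\Big)\right)\lesssim\exp(-t^{2/3}),
$$
and then computes $\gamma_{2/3}([1,n^{2/d}],d_1)\lesssim\sqrt{n}$, $\gamma_1([1,n^{2/d}],d_2)\lesssim_d\log\log n$, $D_2\lesssim_d\log\log\log n$. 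The $\log\log n$ thus enters through the $\gamma_1$-functional of the logarithmic $L_2$ pseudometric on $\nu$, not through a tail-versus-cardinality trade-off; the residual $\exp(-t^{2/3})$ tail is then killed by sending $t\to\infty$ slowly (so that $t\log\log\log n=o(\log\log n)$), which is exactly why the statement is only $C\log\log n$ with $C$ depending on $d$ rather than a tight constant. To make your route work you would have to either sharpen the anchor tail by a different decomposition (e.g., split off the heavy part of the kernel) or replace the crude union bound by a genuine chaining across dyadic scales---at which point you have reproduced the paper's argument.
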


\paragraph{Verifying (\ref{eq:gofadapt2}).} Let
$$
\nu_n(s)'=\left(\frac{\log\log n}{n}\right)^{-4/(4s+d)},
$$
which is smaller than $n^{2/d}$ for $s\geq {d/4}$. Hence it suffices to show
$$
\inf_{s\geq d/4}\inf_{p\in H_1^{\rm GOF}(\Delta_{n,s};s)}P(T_{n,\nu_n(s)'}^{\rm GOF}\geq C(d)\log\log n)\to 1
$$
as $n\rightarrow \infty$.

First of all, observe 
$$
0\leq\EE\left(\tilde{s}_{n,\nu_n(s)'}^2\right)\leq\EE G_{2\nu_n(s)'}(X_1,X_2)\leq M^2 (2\nu_n(s)'/\pi)^{-d/2}
$$
and
$$
\var\left(\tilde{s}_{n,\nu_n(s)'}^2\right)\lesssim _d M^3n^{-1}(\nu_n(s)')^{-3d/4}+M^2n^{-2}(\nu_n(s)')^{-d/2}
$$
for any $s$ and $p\in H_1^{\rm GOF}(\Delta_{n,s},s)$. Further considering $1/n^2=o(M^2 (2\nu_n(s)'/\pi)^{-d/2})$ uniformly over all $s$, we obtain that
$$
\inf_{s\geq d/4}\inf_{p\in H_1^{\rm GOF}(\Delta_{n,s};s)}P\left(
\hat{s}_{n,\nu_n(s)'}^2\leq 2M^2 (2\nu_n(s)'/\pi)^{-d/2}\right)\to 1.
$$

Let $$\Delta_{n,s}\geq c(\sqrt{M}+M)(\log\log n/n)^{2s/(d+4s)}$$ for some sufficiently large $c=c(d)$. Then 
$$
\EE\hat{\gamma_{\nu_n(s)'}^2}(\PP,\PP_0)=\gamma_{\nu_n(s)'}^2(\PP,\PP_0)\geq \left(\frac{\pi}{\nu_n(s)'}\right)^{d/2}\cdot\frac{\|p-p_0\|_{L_2}^2}{4},
$$
as guaranteed by Lemma \ref{le:gaussmmd}. Further considering that
$$
\var\left(\hat{\gamma_{\nu_n(s)'}^2}(\PP,\PP_0)\right)\lesssim_d M^2n^{-2}(\nu_{n}(s)')^{-d/2}+Mn^{-1}(\nu_{n}(s)')^{-3d/4}\|p-p_0\|_{L_2}^2,
$$
we immediately have
\begin{align*}
&\lim_{n\to\infty}\inf_{s\geq d/4}\inf_{p\in H_1^{\rm GOF}(\Delta_{n,s};s)}P(T_{n,\nu_n(s)'}^{\rm GOF}\geq C(d)\log\log n)\\
\geq& \lim_{n\to\infty}\inf_{s\geq d/4}\inf_{p\in H_1^{\rm GOF}(\Delta_{n,s};s)}P\left(\frac{n\gamma_{\nu_n(s)'}^2(\PP,\PP_0)/2}{\sqrt{2\hat{s}_{n,\nu_n(s)'}^2}}\geq C(d)\log\log n\right)= 1.
\end{align*}	
\end{proof}
\vskip 25pt

\begin{proof}[Proof of Theorem \ref{th:homadapt} and Theorem \ref{th:indadapt}]
The proof of Theorem \ref{th:homadapt} and Theorem \ref{th:indadapt} is very similar to that of Theorem \ref{th:gofadapt}. Hence we only emphasize the main differences here.

\paragraph{For adaptive homogeneity test:} to verify that there exists $C=C(d)>0$ such that
$$
\lim\limits_{n\rightarrow \infty}P(T_n^{\rm HOM (adapt)}\geq C\log\log n)=0
$$	
under $H_0^{\rm HOM}$, observe that
$$
T_n^{\rm HOM (adapt)}\leq \sup\limits_{1\leq \nu_n\leq n^{2/d}}\sqrt{\frac{\EE[\bGnn(X_1,X_2)]^2}{\hat{s}_{n,m,\nu_n}^2}}\cdot \left(\frac{1}{n}+\frac{1}{m}\right)^{-1}\sup\limits_{1\leq \nu_n\leq n^{2/d}} \frac{|\hat{\gamma_{\nu_n}^2}(\PP,\QQ)|}{\sqrt{2\EE[\bGnn(X_1,X_2)]^2}}.
$$
Denote $X_1,\cdots,X_n,Y_1,\cdots,Y_m$ as $Z_1,\cdots,Z_N$. Hence
$$
2\sum_{i=1}^n\sum\limits_{j=1}^m G_{\nu_n}(X_i,Y_j)=\sum\limits_{1\leq i\neq j\leq N}G_{\nu_n}(Z_i,Z_{j})-\sum\limits_{1\leq i\neq j\leq n}G_{\nu_n}(X_i,X_{j})-\sum\limits_{1\leq i\neq j\leq m}G_{\nu_n}(Y_i,Y_j)
$$
and
\begin{align*}
&\sup\limits_{1\leq \nu_n\leq n^{2/d}} \frac{|\hat{\gamma_{\nu_n}^2}(\PP,\QQ)|}{\sqrt{2\EE[\bGnn(X_1,X_2)]^2}}\\\leq &\left(\frac{1}{n(n-1)}+\frac{1}{mn}\right)\sup\limits_{1\leq \nu_n\leq n^{2/d}}\left|\sum\limits_{1\leq i\neq j\leq n}\frac{\bar{G}_{\nu_n}(X_i,X_{j})}{\sqrt{2\EE[\bGnn(X_1,X_2)]^2}}\right|\\&+\left(\frac{1}{m(m-1)}+\frac{1}{mn}\right)\sup\limits_{1\leq \nu_n\leq n^{2/d}}\left|\sum\limits_{1\leq i\neq j\leq m}\frac{\bar{G}_{\nu_n}(Y_i,Y_j)}{\sqrt{2\EE[\bGnn(X_1,X_2)]^2}}\right|\\
&+\frac{1}{mn}\sup\limits_{1\leq \nu_n\leq n^{2/d}}\left|\sum\limits_{1\leq i\neq j\leq N}\frac{\bar{G}_{\nu_n}(Z_i,Z_{j})}{\sqrt{2\EE[\bGnn(X_1,X_2)]^2}}\right|
\end{align*}
Apply Lemma \ref{at4} to bound each term of the right hand side of the above inequality. Then we conclude that for some $C=C(d)>0$,
$$
\lim\limits_{n\rightarrow \infty}P\left(\left(\frac{1}{n}+\frac{1}{m}\right)^{-1}\sup\limits_{1\leq \nu_n\leq n^{2/d}} \frac{|\hat{\gamma_{\nu_n}^2}(\PP,\QQ)|}{\sqrt{2\EE[\bGnn(X_1,X_2)]^2}}\geq C\log\log n\right)=0.
$$

\paragraph{For adaptive independence test:} to verify that there exists $C=C(d)>0$ such that
\begin{align}\label{eq:indadapt}
\lim\limits_{n\rightarrow \infty}P(T_n^{\rm IND (adapt)}\geq C\log\log n)=0
\end{align}
under $H_0^{\rm IND}$, 
recall the decomposition 
$$
\hat{\gamma_{\nu_n}^2}(\PP,\PP^{X^1}\otimes \PP^{X^2})=D_2(\nu_n)+R_n=\frac{1}{n(n-1)}\sum\limits_{1\leq i\neq j\leq n}G_{\nu_n}^*(X_i,X_j)+R_n,
$$
where we express $R_n$ as $R_n=D_3(\nu_n)+D_4(\nu_n)$ in the proof of Theorem \ref{th:indpower}.

Following arguments similar to those in the proof of Lemma \ref{at4}, we obtain that there exists $C(d)>0$ such that for sufficiently large $n$,
$$
P\left(\sup\limits_{1\leq \nu_n\leq n^{2/d}}\left|\frac{nD_{2}(\nu_n)}{\sqrt{2\EE [G_{\nu_n}^*(X_1,X_2)]^2}}\right|\geq C(d)(\log\log n+t\log\log\log n )\right)\lesssim \exp(-t^{2/3}),
$$
Similarly, 
\begin{align*}
&P\left(\sup\limits_{1\leq \nu_n\leq n^{2/d}}\left|\frac{n^{3/2}D_{3}(\nu_n)}{\sqrt{2\EE [G_{\nu_n}^*(X_1,X_2)]^2}}\right|\geq C(d)(\log\log n+t\log\log\log n )\right)\lesssim\exp(-t^{1/2})\\
&P\left(\sup\limits_{1\leq \nu_n\leq n^{2/d}}\left|\frac{n^2D_{4}(\nu_n)}{\sqrt{2\EE [G_{\nu_n}^*(X_1,X_2)]^2}}\right|\geq C(d)(\log\log n+t\log\log\log n )\right)\lesssim\exp(-t^{2/5})
\end{align*}
for sufficiently large $n$. 

On the other hand, note that
$$
\EE [G_{\nu_n}^*(X_1,X_2)]^2=\prod\limits_{j=1}^2\EE[\bGnn(X_1^j,X_2^j)]^2,
$$
and based on results in the proof of Lemma \ref{consistent-est-unif}, 
$
\sup\limits_{1\leq \nu_n\leq n^{2/d}}\left|\tilde{s}^2_{n,j,\nu_n}/\EE[\bGnn(X_1^j,X_2^j)]^2-1\right|=o_p(1)
$
for $j=1,2$. Further considering that 
$$
1/n^2=o(\EE[G_{\nu_n}^*(X_1,X_2)]^2)
$$
uniformly over all $\nu_n\in[1,n^{2/d}]$, we obtain
$$
\sup\limits_{1\leq \nu_n\leq n^{2/d}}\left|\hat{s}_{n,\nu_n}^2/\EE[G_{\nu_n}^*(X_1,X_2)]^2-1\right|=o_p(1).
$$
They combined together ensure that (\ref{eq:indadapt}) holds.

To show that the detection boundary of $\Phi^{\rm IND(adapt)}$ is of order $O((n/\log\log n)^{-2s/(d+4s)})$, observe that
$$
0\leq\EE\left(\tilde{s}_{n,j,\nu_n(s)'}^2\right)\leq\EE G_{2\nu_n(s)'}(X_1^j,X_2^j)\leq M_j^2 (2\nu_n(s)'/\pi)^{-d_j/2}
$$
and
$$
\var\left(\tilde{s}_{n,j,\nu_n(s)'}^2\right)\lesssim _{d_j} M_j^3n^{-1}(\nu_n(s)')^{-3d_j/4}+M_j^2n^{-2}(\nu_n(s)')^{-d_j/2}
$$
for $j=1,2$, where $\nu_n(s)'=\left(\log\log n/n\right)^{-4/(4s+d)}$ as in the proof of Theorem \ref{th:gofadapt}. Therefore,
$$
\inf_{s\geq d/4}\inf_{p\in H_1^{\rm IND}(\Delta_{n,s};s)}P\left(
\left|\tilde{s}_{n,j,\nu_n(s)'}^2\right|\leq \sqrt{3/2}M_j^2 (2\nu_n(s)'/\pi)^{-d_j/2}\right)\to 1,\quad j=1,2.
$$	
Further considering $1/n^2=o(M^2 (2\nu_n(s)'/\pi)^{-d/2})$ uniformly over all $s$, we obtain that
$$
\inf_{s\geq d/4}\inf_{p\in H_1^{\rm IND}(\Delta_{n,s};s)}P\left(
\hat{s}_{n,\nu_n(s)'}^2\leq 2M^2 (2\nu_n(s)'/\pi)^{-d/2}\right)\to 1.
$$		
\end{proof}

\bibliographystyle{plainnat}
\bibliography{mmdref}

\appendix

\section{Properties of Gaussian Kernel}
We collect here a couple of useful properties of Gaussian kernel that we used repeated in the proof to the main results.
\begin{lemma}
\label{le:gausskernel}
For any $f\in L_2(\RR^d)$,
$$
\int G_{\nu}(x,y)f(x)f(y)dxdy=\left(\frac{\pi}{\nu}\right)^{\frac{d}{2}}\int\exp\left(-\frac{\|\omega\|^2}{4\nu}\right)\left\|\Fcal{f}(\omega)\right\|^2 d \omega.
$$
\end{lemma}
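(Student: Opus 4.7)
The plan is to identify the double integral as an $L_2$ inner product and then pass to Fourier space. Writing $g_\nu(z):=\exp(-\nu\|z\|^2)$ so that $G_\nu(x,y)=g_\nu(x-y)$, and noting that $g_\nu$ is even, the left-hand side rewrites as
$$
\int G_\nu(x,y)f(x)f(y)\,dxdy \;=\; \int (g_\nu\ast f)(y)\,f(y)\,dy \;=\; \langle g_\nu\ast f,\,f\rangle_{L_2}.
$$
Since $g_\nu\in L_1\cap L_2$ and $f\in L_2$, the convolution $g_\nu\ast f$ belongs to $L_2$, so the convolution theorem (under the paper's Fourier convention, namely $\Fcal(g_\nu\ast f)=(2\pi)^{d/2}\,\Fcal(g_\nu)\,\Fcal(f)$) together with Plancherel's identity gives
$$
\langle g_\nu\ast f,\,f\rangle_{L_2} \;=\; (2\pi)^{d/2}\int_{\RR^d}\Fcal(g_\nu)(\omega)\,|\Fcal(f)(\omega)|^2\,d\omega,
$$
where I use that $f$ is real-valued (as in the applications of this lemma in the paper) so that $\overline{\Fcal(f)(\omega)}\cdot \Fcal(f)(\omega)=|\Fcal(f)(\omega)|^2=\|\Fcal(f)(\omega)\|^2$.

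It remains to compute $\Fcal(g_\nu)$ explicitly. Splitting the integrand into $d$ one-dimensional factors and completing the square in each,
$$
\Fcal(g_\nu)(\omega) \;=\; \frac{1}{(2\pi)^{d/2}}\int_{\RR^d} e^{-\nu\|x\|^2 - ix^\top\omega}\,dx \;=\; \frac{1}{(2\nu)^{d/2}}\exp\!\left(-\frac{\|\omega\|^2}{4\nu}\right),
$$
so $(2\pi)^{d/2}\,\Fcal(g_\nu)(\omega)=(\pi/\nu)^{d/2}\exp(-\|\omega\|^2/(4\nu))$. Substituting this into the previous display gives exactly the claimed identity.

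No serious obstacle is anticipated; the only point requiring care is justifying the convolution/Plancherel computation for $f\in L_2$ that need not lie in $L_1$, but this is standard because $g_\nu$ is a Schwartz function and $g_\nu\ast f\in L_2$, which is all Plancherel requires.
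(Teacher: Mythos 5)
Your proof is correct and uses essentially the same Fourier-analytic argument as the paper's: the paper represents $G_\nu(x,y)=\EE_Z\, e^{iZ^\top(x-y)}$ with $Z\sim N(0,2\nu I_d)$, applies Fubini, and recognizes a Gaussian average of $\bigl|\int e^{-i\omega^\top x}f(x)\,dx\bigr|^2$, whereas you reach the identical display via the convolution theorem plus Plancherel together with the explicit Gaussian Fourier transform --- two packagings of the same fact. Your remark that $f$ needs to be real-valued for the stated identity is correct, and the paper's proof also uses this implicitly when it collapses the double integral into a squared modulus.
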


\begin{proof}
Denote by $Z$ a Gaussian random vector with mean $0$ and covariance matrix $2\nu I_d$. Then
\begin{align*}
\int G_{\nu}(x,y)f(x)f(y)dxdy=&\int\exp\left(-\nu\|x-y\|^2\right)f(x)f(y)d xdy\\
=&\int\EE\exp[iZ^\top (x-y)]f(x)f(y)dxdy\\
=&\EE\left\|\int\exp(-iZ^\top x)f(x)dx\right\|^2\\
=&\int\frac{1}{(4\pi\nu)^{d/2}}\exp\left(-\frac{\|\omega\|^2}{4\nu}\right)\left\|\int\exp(-i\omega^\top x)f(x)dx\right\|^2\\
=&\left(\frac{\pi}{\nu}\right)^{\frac{d}{2}}\int\exp\left(-\frac{\|\omega\|^2}{4\nu}\right)\left\|\Fcal{f}(\omega)\right\|^2d\omega,
\end{align*}
which concludes the proof.
\end{proof}

A useful consequence of Theorem \ref{le:gausskernel} is a close connection between Gaussian kernel MMD and $L_2$ norm.

\begin{lemma}
\label{le:gaussmmd}
For any $f\in \Wcal^{s,2}(M)$
$$
\left(\nu\over \pi\right)^{d/2}\int G_{\nu}(x,y)f(x)f(y)dxdy\ge {1\over 4}\|f\|^2_{L_2},
$$
provided that
$$
\nu^s\ge {4^{1-s}M^2\over (\log 3)^{s}}\cdot\|f\|_{L_2}^{-2}.
$$
\end{lemma}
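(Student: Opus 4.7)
Proof plan for Lemma \ref{le:gaussmmd}.

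My plan is to move to the Fourier side and exploit the Sobolev smoothness to rule out the high-frequency content of $f$, then bound $\exp(-\|\omega\|^2/(4\nu))$ from below uniformly on the low-frequency region. Concretely, Lemma \ref{le:gausskernel} rewrites the left-hand side as $\int \exp(-\|\omega\|^2/(4\nu))\,\|\Fcal(f)(\omega)\|^2\,d\omega$, while Plancherel gives $\|f\|_{L_2}^2=\int\|\Fcal(f)(\omega)\|^2\,d\omega$. Hence the inequality amounts to a weighted-versus-unweighted comparison of $\|\Fcal(f)\|^2$ with weight $\exp(-\|\omega\|^2/(4\nu))\in(0,1]$.

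The key step is a frequency truncation: for any $R>0$, since $(1+\|\omega\|^2)^s\ge\|\omega\|^{2s}>R^{2s}$ on $\{\|\omega\|>R\}$, the Sobolev constraint $\|f\|_{\Wcal^{s,2}}\le M$ gives
\begin{equation*}
\int_{\|\omega\|>R}\|\Fcal(f)(\omega)\|^2\,d\omega\ \le\ R^{-2s}M^2,
\end{equation*}
so $\int_{\|\omega\|\le R}\|\Fcal(f)\|^2\ge\|f\|_{L_2}^2-R^{-2s}M^2$. On $\{\|\omega\|\le R\}$ the weight satisfies $\exp(-\|\omega\|^2/(4\nu))\ge\exp(-R^2/(4\nu))$, so that
\begin{equation*}
\int \exp(-\|\omega\|^2/(4\nu))\,\|\Fcal(f)(\omega)\|^2\,d\omega\ \ge\ \exp\!\left(-\tfrac{R^2}{4\nu}\right)\bigl(\|f\|_{L_2}^2-R^{-2s}M^2\bigr).
\end{equation*}

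Now I calibrate $R$ to make this at least $\tfrac14\|f\|_{L_2}^2$. Setting $R^{2s}=4M^2/\|f\|_{L_2}^2$ kills a quarter of the mass, leaving the bracket equal to $\tfrac34\|f\|_{L_2}^2$. It then remains to show $\exp(-R^2/(4\nu))\ge 1/3$, i.e.\ $R^2\le 4\nu\log 3$. With the chosen $R$ this is exactly $(4M^2/\|f\|_{L_2}^2)^{1/s}\le 4\nu\log 3$, equivalently $\nu^s\ge 4^{1-s}M^2/\bigl((\log 3)^s\|f\|_{L_2}^2\bigr)$, which is the assumed hypothesis. Multiplying by $(\nu/\pi)^{d/2}$ (absorbed via Lemma \ref{le:gausskernel}'s prefactor) yields the claimed bound.

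There is no serious obstacle; the only thing to watch is the bookkeeping of the constants so that $\tfrac13\cdot\tfrac34=\tfrac14$ lines up with the stated threshold $4^{1-s}M^2/(\log 3)^s$. The split ``lose a quarter to high frequencies, lose two-thirds to the Gaussian weight'' is essentially the only natural choice that produces the stated constant; any other split would give the same rate with slightly different constants.
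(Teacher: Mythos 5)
Your proof is correct and follows essentially the same route as the paper: invoke Lemma \ref{le:gausskernel} to move to the Fourier side, truncate at frequency $T$ (your $R$) chosen so that the Sobolev tail bound discards a quarter of $\|f\|_{L_2}^2$, and then lower-bound the Gaussian weight by $e^{-T^2/(4\nu)}\ge 1/3$ under the stated condition on $\nu$. The constant bookkeeping ($\tfrac34\cdot\tfrac13=\tfrac14$) matches the paper's as well.
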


\begin{proof} In light of Lemma \ref{le:gausskernel},
$$
\left(\nu\over \pi\right)^{d/2}\int G_{\nu}(x,y)f(x)f(y)dxdy=\int\exp\left(-\frac{\|\omega\|^2}{4\nu}\right)\left\|\Fcal{f}(\omega)\right\|^2d\omega.
$$
By Plancherel Theorem, for any $T>0$,
$$
\int_{\|\omega\|\leq T}\left\|\Fcal{f}(\omega)\right\|^2d\omega=\|f\|_{L^2}^2-\int_{\|\omega\|> T}\left\|\Fcal{f}(\omega)\right\|^2d\omega\geq \|f\|_{L^2}^2-\frac{M^2}{T^{2s}},
$$
Choosing
$$T=\left(\frac{2M}{\|f\|_{L^2}}\right)^{1/s},$$
yields
$$
\int_{\|\omega\|\leq T}\left\|\Fcal{f}(\omega)\right\|^2d\omega\geq \frac{3}{4}\|f\|_{L^2}^2.
$$
Hence
\begin{align*}
\int\exp\left(-\frac{\|\omega\|^2}{4\nu}\right)\left\|\Fcal{f}(\omega)\right\|^2d \omega&\geq \exp\left(-\frac{T^2}{4\nu}\right)\int_{\|\omega\|\leq T}\left\|\Fcal{f}(\omega)\right\|^2d \omega\\
&\geq \frac{3}{4} \exp\left(-\frac{T^2}{4\nu}\right)\|f\|_{L^2}^2.
\end{align*}
In particular, if
$$\nu\geq \frac{(2M)^{2/s}}{4\log 3}\cdot\|f\|_{L^2}^{-2/s},$$
then
$$
\int\exp\left(-\frac{\|\omega\|^2}{4\nu}\right)\left\|\Fcal{f}(\omega)\right\|^2d \omega\geq\frac{1}{4}\|f\|_{L^2}^2,
$$
which concludes the proof.
\end{proof}

\section{Proof of Lemma \ref{consistent-est-unif}}
	We first prove that $\sup\limits_{1\leq \nu_n\leq n^{2/d}}\left|\tilde{s}_{n,\nu_n}^2/\EE[\bGnn(X_1,X_2)]^2-1\right|=o_p(1)$ and then show the difference caused by the modification from $\tilde{s}_{n,\nu_n}^2$ to $\hat{s}_{n,\nu_n}^2$ is asymptotically negligible.
	
	Note that
	\begin{align*}
	&\sup\limits_{1\leq \nu_n\leq n^{2/d}}\left|\tilde{s}_{n,\nu_n}^2/\EE[\bGnn(X_1,X_2)]^2-1\right|\\\leq&\left(\inf\limits_{1\leq \nu_n\leq n^{2/d}}\nu_n^{d/2}\EE[\bGnn(X_1,X_2)]^2\right)^{-1}\cdot\sup\limits_{1\leq \nu_n\leq n^{2/d}}\nu_n^{d/2}\left|\tilde{s}_{n,\nu_n}^2-\EE[\bGnn(X_1,X_2)]^2\right|.
	\end{align*}
	
	For $X\sim \PP_0$, denote the distribution of $(X,X)$ as $\PP_1$. Then we have
	$$
	\EE[\bGnn(X_1,X_2)]^2=\gamma_{\nu_n}^2(\PP_1,\PP_0\otimes\PP_0).
	$$
	Hence $\EE[\bGnn(X_1,X_2)]^2>0$ for any $\nu_n>0$ since $G_{\nu_n}$ is characteristic.
	
	In addition, $\nu_n^{d/2}\EE[\bGnn(X_1,X_2)]^2$ is continuous with respect to $\nu_n$ and
	$$
	\lim\limits_{\nu_n\to \infty}\nu_n^{d/2}\EE[\bGnn(X_1,X_2)]^2=\left(\frac{\pi}{2}\right)^{d/2}\|p_0\|_{L_2}^2.
	$$
	Therefore,
	$$
	\inf\limits_{1\leq \nu_n\leq n^{2/d}}\nu_n^{d/2}\EE[\bGnn(X_1,X_2)]^2\geq \inf\limits_{\nu_n\in[0,\infty)}\nu_n^{d/2}\EE[\bGnn(X_1,X_2)]^2>0,
	$$
	and it remains to prove
	$$
	\sup\limits_{1\leq \nu_n\leq n^{2/d}}\nu_n^{d/2}\left|\tilde{s}_{n,\nu_n}^2-\EE[\bGnn(X_1,X_2)]^2\right|=o_p(1).
	$$
	Recall the expression of $\tilde{s}_{n,\nu_n}^2$. It suffcies to show that
	\begin{align}
	&\sup\limits_{1\leq \nu_n\leq n^{2/d}}\nu_n^{d/2}\left|\frac{1}{n(n-1)}\sum\limits_{1\leq i\neq j\leq n}G_{2\nu_n}(X_i,X_j)-\EE G_{2\nu_n}(X_1,X_2)\right|\label{var_comp1}\\
	&\sup\limits_{1\leq \nu_n\leq n^{2/d}}\nu_n^{d/2}\left|\frac{2(n-3)!}{n!}\sum\limits_{\substack{1\le i,j_1,j_2\le n\\ |\{i,j_1,j_2\}|=3}}G_{\nu_n}(X_i,X_{j_1})G_{\nu_n}(X_i,X_{j_2})-\EE G_{\nu_n}(X_1,X_2)G_{\nu_n}(X_1,X_3)\right|\label{var_comp2}\\
	&\sup\limits_{1\leq \nu_n\leq n^{2/d}}\nu_n^{d/2}\left|\frac{(n-4)!}{n!}\sum\limits_{\substack{1\le i_1,i_2,j_1,j_2\le n\\ |\{i_1,i_2,j_1,j_2\}|=4}}G_{\nu_n}(X_{i_1},X_{j_1})G_{\nu_n}(X_{i_2},X_{j_2})-[\EE G_{\nu_n}(X_1,X_2)]^2\right|\label{var_comp3}
	\end{align}
	are all $o_p(1)$. We shall first control (\ref{var_comp1})
	and then bound (\ref{var_comp2}) and (\ref{var_comp3}) in the same way.
	
	Let
	$$
	\HEE_n G_{2\nu_n}(X,X')=\frac{1}{n(n-1)}\sum\limits_{1\leq i\neq j\leq n}G_{2\nu_n}(X_i,X_j).
	$$
	In the rest of this proof, abbreviate $\HEE_n G_{2\nu_n}(X,X')$ and $\EE G_{2\nu_n}(X_1,X_2)$ as $\HEE_n G_{2\nu_n}$ and $\EE G_{2\nu_n}$ respectively when no confusion occurs.
	
	Divide the whole interval $[1,n^{2/d}]$ into $A$ sub-intervals, $[u_0,u_1],[u_1,u_2],\cdots,[u_{A-1},u_A]$ with $u_0=1$, $u_A=n^{2/d}$. For any $\nu_n\in[u_{a-1},u_a]$,
	\begin{align*}
	\nu_n^{d/2}\HEE_n G_{2\nu_n}-\nu_n^{d/2}\EE G_{2\nu_n}\geq &-\nu_n^{d/2}\left| \HEE_n G_{2u_{a}}-\EE G_{2u_a}\right|-\nu_n^{d/2}\left|\EE G_{2u_a}-\EE G_{2u_{a-1}}\right|\\\geq &-u_a^{d/2}\left| \HEE_n G_{2u_{a}}-\EE G_{2u_a}\right|-u_a^{d/2}\left|\EE G_{2u_a}-\EE G_{2u_{a-1}}\right|
	\end{align*}
	and
	$$
	\nu_n^{d/2}\HEE_n G_{2\nu_n}-\nu_n^{d/2}\EE G_{2\nu_n}\leq u_a^{d/2}\left|\HEE_n G_{2u_{a-1}}-\EE G_{2u_{a-1}}\right|+u_a^{d/2}\left|\EE G_{2u_a}-\EE G_{2u_{a-1}}\right|,
	$$
	which together ensure that
	\begin{align*}
	&\sup\limits_{1\leq \nu_n\leq n^{2/d}}\left|\nu_n^{d/2}\HEE_nG_{2\nu_n}-\nu_n^{d/2}\EE G_{2\nu_n}\right|\\ \leq &\sup\limits_{1\leq a\leq A}\left(\frac{u_a}{u_{a-1}}\right)^{d/2}\cdot\sup\limits_{0\leq a\leq A}u_a^{d/2}\left| \HEE_n G_{2u_{a}}-\EE G_{2u_a}\right|+\sup\limits_{1\leq a\leq A}u_a^{d/2}\left|\EE G_{2u_a}-\EE G_{2u_{a-1}}\right|\\
	\leq & \sup\limits_{1\leq a\leq A}\left(\frac{u_a}{u_{a-1}}\right)^{d/2}\cdot\sup\limits_{0\leq a\leq A}u_a^{d/2}\left| \HEE_n G_{2u_{a}}-\EE G_{2u_a}\right|+\sup\limits_{1\leq a\leq A}\left|u_a^{d/2}\EE G_{2u_a}-u_{a-1}^{d/2}\EE G_{2u_{a-1}}\right|\\
	&+\sup\limits_{1\leq a\leq A}\left(\left(u_a^{d/2}-u_{a-1}^{d/2}\right)\EE G_{2u_{a-1}}\right).
	\end{align*}
	Bound the three terms in the right hand side of the last inequality separately. 
	
	Let $\{u_a\}_{a\geq 0}$ be a geometric sequence, namely, 
	$$
	A:=\inf\{a\in\NN: r^a\geq n^{2/d}\},
	$$
	and
	$$u_a=\begin{cases}
	r^a,\quad \forall\ 0\leq a\leq A-1\\
	n^{2/d}, \quad a=A
	\end{cases},
	$$
	with $r>1$ to be determined later.
	
	Since
	$
	\lim\limits_{\nu\rightarrow \infty} \nu^{d/2}\EE G_{2\nu_n}=(\pi/2)^{d/2}\|p_0\|^2
	$
	and $\nu^{d/2}\EE G_{2\nu}$ is continuous, we obtain that for any $\varepsilon>0$, there exsits sufficiently small $r>1$ such that
	$$
	\sup\limits_{1\leq a\leq A}\left|u_a^{d/2}\EE G_{2u_a}-u_{a-1}^{d/2}\EE G_{2u_{a-1}}\right|\leq \varepsilon.
	$$
	At the same time, we can also ensure 
	$$
	\sup\limits_{1\leq a\leq A}\left(\left(u_a^{d/2}-u_{a-1}^{d/2}\right)\EE G_{2u_{a-1}}\right)\leq (r^{d/2}-1)\left(\frac{\pi}{2}\right)^{d/2}\|p_0\|^2\leq \varepsilon
	$$
	by choosing $r$ sufficiently small.
	
	Finally consider 
	$$\sup\limits_{1\leq a\leq A}\left(\frac{u_a}{u_{a-1}}\right)^{d/2}\cdot\sup\limits_{0\leq a\leq A}u_a^{d/2}\left| \HEE_n G_{2u_{a}}-\EE G_{2u_a}\right|.
	$$
	On the one hand,
	$$
	\sup\limits_{1\leq a\leq A}\left(\frac{u_a}{u_{a-1}}\right)^{d/2}\leq r^{d/2}.
	$$
	On the other hand, since 
	$$
	\var\left(\HEE_n G_{2\nu_n}\right)\lesssim \frac{1}{n}\EE G_{2\nu_n}(X,X')G_{2\nu_n}(X,X'')+\frac{1}{n^2}\EE G_{4\nu_n}(X,X')\lesssim_d \frac{\nu_n^{-3d/4}\|p_0\|^3}{n}+\frac{\nu_n^{-d/2}\|p_0\|^2}{n^2}
	$$
	for any $\nu_n\in(0,\infty)$, we have
	\begin{align*}
	&P\left(\sup\limits_{0\leq a\leq A}u_a^{d/2}\left| \HEE_n G_{2u_{a}}-\EE G_{2u_a}\right|\geq \varepsilon\right)\\\leq& \frac{\sum\limits_{a=0}^Au_a^d\var\left(\HEE_n G_{2u_a}\right)}{\varepsilon^2}\lesssim_{d,r}\frac{1}{\varepsilon^2}\left(\frac{u_A^{d/4}\|p_0\|^3}{n}+\frac{u_A^{d/2}\|p_0\|^2}{n^2}\right)\rightarrow 0
	\end{align*}
	as $n\rightarrow \infty$. Hence we conclude
	$
	\sup\limits_{1\leq \nu_n\leq n^{2/d}}\left|\nu_n^{d/2}\HEE_nG_{2\nu_n}-\nu_n^{d/2}\EE G_{2\nu_n}\right|=o_p(1)
	$.
	
	Considering that
	$$
	\lim\limits_{\nu_n\to\infty}\nu_n^{d/2}\EE G_{\nu_n}(X_1,X_2)G_{\nu_n}(X_1,X_3)=0,\quad \lim\limits_{\nu_n\to\infty}\nu_n^{d/2}[\EE G_{\nu_n}(X_1,X_2)]^2=0,
	$$
	we obtain that (\ref{var_comp2}) and (\ref{var_comp3}) are also $o_p(1)$, based on almost the same arguments. Hence
	$$\sup\limits_{1\leq \nu_n\leq n^{2/d}}\left|\tilde{s}_{n,\nu_n}^2/\EE[\bGnn(X_1,X_2)]^2-1\right|=o_p(1).$$
	
	On the other hand, since $\EE[\bGnn(X_1,X_2)]^2\gtrsim_{p_0,d}\nu_n^{-d/2}$
	for $\nu_n\in[1,n^{2/d}]$, 
	$$
	\sup\limits_{1\leq \nu_n\leq n^{2/d}} \frac{1}{n^2\EE[\bGnn(X_1,X_2)]^2}=o_p(1).
	$$
	Hence we finally conclude that
	$$\sup\limits_{1\leq \nu_n\leq n^{2/d}}\left|\hat{s}_{n,\nu_n}^2/\EE[\bGnn(X_1,X_2)]^2-1\right|=o_p(1).$$

\section{Proof of Lemma \ref{at4}}
	Let
	$$
	K_{\nu_n}(x,x')=\frac{G_{\nu_n}(x,x')}{\sqrt{2\EE G_{2\nu_n}(X_1,X_2)}},\quad\forall\ x,x'\in\mathbb{R}^d,
	$$
	and accordingly,
	$$
	\bar{K}_{\nu_n}(x,x')=\frac{\bar{G}_{\nu_n}(x,x')}{\sqrt{2\EE G_{2\nu_n}(X_1,X_2)}}.
	$$
	Hence
	$$
	\tilde{T}_{n}^{\rm GOF(adapt)}=\sup\limits_{1\leq \nu_n\leq n^{2/d}}\left|\frac{1}{n-1}\sum\limits_{i\neq j}\bar{K}_{\nu_n}(X_i,X_j)\cdot \sqrt{\frac{\EE G_{2\nu_n}(X_1,X_2)}{\EE[\bGnn(X_1,X_2)]^2}}\right|.
	$$
	
	To finish this proof, we first bound 
	\begin{align}\label{adapt}
	\sup\limits_{1\leq \nu_n\leq n^{2/d}}\left|\frac{1}{n-1}\sum\limits_{i\neq j}\bar{K}_{\nu_n}(X_i,X_j)\right|
	\end{align}
	and then control $\tilde{T}_{n}^{\rm GOF(adapt)}$.
	
	\paragraph{Step (\ro).} There are two main tools that we borrow in this step. First, we apply results in \cite{arcones1993limit} to obtain a Bernstein-type inequality for 
	$$
	\left|\frac{1}{n-1}\sum\limits_{i\neq j}\bar{K}_{\nu_0}(X_i,X_j)\right|\ \mathrm{and}\ \left|\frac{1}{n-1}\sum\limits_{i\neq j}\left(\bar{K}_{\nu_n}(X_i,X_j)-\bar{K}_{\nu_n'}(X_i,X_j)\right)\right|
	$$
	for some $\nu_0$ and arbitrary $\nu_n,\nu_n'\in[1,\infty)$.
	And based on that, we borrow Talagrand's techniques on handling Bernstein-type inequality (\eg, see \citet{talagrand2014upper}) to give a generic chaining bound of (\ref{adapt}).
	
	To be more specific, for any $\nu_0,\nu_n,\nu_n'\in[1,n^{2/d}]$, define
	$$
	d_1(\nu_n,\nu_n')=\|\bar{K}_{\nu_n'}-\bar{K}_{\nu_n}\|_{L_{\infty}},\quad d_2(\nu_n,\nu_n')=\|\bar{K}_{\nu_n'}-\bar{K}_{\nu_n}\|_{L_2}.
	$$
	Then Proposition 2.3 (c) of \cite{arcones1993limit} ensures that for any $t>0$,
	\begin{align}\label{bern1}
	P\left(\left|\frac{1}{n-1}\sum\limits_{i\neq j}\bar{K}_{\nu_0}(X_i,X_j)\right|\geq t\right)\leq C\exp\left(-C\min\left\{\frac{t}{\|\bar{K}_{\nu_0}\|_{L_2}},\left(\frac{\sqrt{n}t}{\|\bar{K}_{\nu_0}\|_{L_\infty}}\right)^{\frac{2}{3}}\right\}\right)
	\end{align}
	and
	\begin{align*}
	&P\left(\left|\frac{1}{n-1}\sum\limits_{i\neq j}\left(\bar{K}_{\nu_n}(X_i,X_j)-\bar{K}_{\nu_n'}(X_i,X_j)\right)\right|\geq t\right)\\\leq &C\exp\left(-C\min\left\{\frac{t}{d_2(\nu_n,\nu_n')},\left(\frac{\sqrt{n}t}{d_1(\nu_n,\nu_n')}\right)^{\frac{2}{3}}\right\}\right)
	\end{align*}
	for some $C>0$, and based on a chaining type argument (see, \eg, Theorem 2.2.28 in \cite{talagrand2014upper}) the latter inequality suggests there exists $C>0$ such that
	\begin{align}\label{chaining}
	P\Bigg(\sup\limits_{1\leq \nu_n\leq n^{2/d}}&\left|\frac{1}{n-1}\sum\limits_{i\neq j}\left(\bar{K}_{\nu_n}(X_i,X_j)-\bar{K}_{\nu_0}(X_i,X_j)\right)\right|\geq\\ &C\left(\frac{\gamma_{2/3 }([1,n^{2/d}],d_1)}{\sqrt{n}}t+\gamma_1([1,n^{2/d}],d_2)+D_2t\right) \Bigg)\notag\lesssim \exp(-t^{2/3}),
	\end{align}
	where $\gamma_{2/ 3}([1,n^{2/d}],d_1)$, $\gamma_1([1,n^{2/d}],d_2)$ are the so-called $\gamma$-functionals and $$D_2=\sum\limits_{l\geq 0}e_l([1,n^{2/d}],d_2)$$ with $e_l$ being the so-called entropy numbers. 
	
	A straightforward combination of (\ref{bern1}) and (\ref{chaining}) then gives
	\begin{align*}
	P\Bigg(&\sup\limits_{1\leq \nu_n\leq n^{2/d}}\left|\frac{1}{n-1}\sum\limits_{i\neq j}\bar{K}_{\nu_n}(X_i,X_j)\right|\geq\\ &C\left(\frac{\gamma_{2/3}([1,n^{2/d}],d_1)}{\sqrt{n}}t+\gamma_1([1,n^{2/d}],d_2)+D_2t+\frac{\|\bar{K}_{\nu_0}\|_{L_{\infty}}}{\sqrt{n}}+\|\bar{K}_{\nu_0}\|_{L_2}t\right) \Bigg)\lesssim \exp(-t^{2/3}).
	\end{align*}
	
	Therefore, given that the bounds on $\|\bar{K}_{\nu_0}\|_{L_2}$ and $\|\bar{K}_{\nu_0}\|_{L_{\infty}}$ can be obtained quite directly, \eg, with $\nu_0=1$,
	$$
	\|\bar{K}_{\nu_0}\|_{L_{\infty}}\leq 4\|K_{\nu_0}||_{L_{\infty}}=\frac{4}{\sqrt{2\EE G_{2}}},\qquad \|\bar{K}_{\nu_0}\|_{L_2}\leq \|K_{\nu_0}\|_{L_2}=\frac{\sqrt{2}}{2},
	$$ 
	the main focus is to bound $\gamma_{2/3}([1,n^{2/d}],d_1)$, $\gamma_1([1,n^{2/d}],d_2)$ and $D_2$ properly.
	
	First consider $\gamma_{2/3}([1,n^{2/d}],d_1)$. Note that for any $1\leq \nu_n<\nu_n'<\infty$, 
	$$
	d_1(\nu_n,\nu_n')\leq 4\|K_{\nu_n}-K_{\nu_n'}\|_{L_{\infty}}\leq 4\int_{\nu_n}^{\nu_n'}\left\|\frac{dK_u}{du}\right\|_{L_{\infty}}du
	$$
	
	Since for any $\nu_n$,
	\begin{align*}
	\frac{dK_{\nu_n}}{d\nu_n}=&(-\|x-x'\|^2)G_{\nu_n}(X_1,X_2)\left(\EE G_{2\nu_n}(X_1,X_2)\right)^{-1/2}\\-
	&\frac{1}{2}G_{\nu_n}(X_1,X_2)\left(\EE G_{2\nu_n}(X_1,X_2)\right)^{-3/2}\frac{d}{d\nu_n}\EE G_{2\nu_n}(X_1,X_2)
	\end{align*}
	where
	\begin{align*}
	\left(\EE G_{2\nu_n}(X_1,X_2)\right)^{-1/2}&=\left(\frac{\pi}{2}\right)^{-d/4}\nu_n^{d/4}\left(\int\exp\left(-\frac{\|\omega\|^2}{8\nu_n}\right)\|\Fcal{p_0}(\omega)\|^2d\omega\right)^{-1/2}\\
	&\lesssim_d \nu_n^{d/4}\left(\int\exp\left(-\frac{\|\omega\|^2}{8}\right)\|\Fcal{p_0}(\omega)\|^2d\omega\right)^{-1/2},
	\end{align*}
	\begin{align*}
	\left(\EE G_{2\nu_n}(X_1,X_2)\right)^{-3/2}\lesssim_d \nu_n^{3d/4}\left(\int\exp\left(-\frac{\|\omega\|^2}{8}\right)\|\Fcal{p_0}(\omega)\|^2d\omega\right)^{-3/2},
	\end{align*}
	and
	\begin{align*}
	&\frac{d}{d\nu_n}\EE_{2\nu_n}(X_1,X_2)\\=&\left(\frac{\pi}{2}\right)^{d/2}\nu_n^{-d/2-1}\left(-\frac{d}{2}\cdot \int\exp\left(-\frac{\|\omega\|^2}{8\nu_n}\right)\|\Fcal{p_0}(\omega)\|^2d\omega\right.\\&\left.+\int\exp\left(-\frac{\|\omega\|^2}{8\nu_n}\right)\left(\frac{\|\omega\|^2}{8\nu_n}\right)\|\Fcal{p_0}(\omega)\|^2d\omega\right),
	\end{align*}
	which together ensure 
	$$
	\left\|\frac{dK_{\nu_n}}{d\nu_n}\right\|_{L_{\infty}}\lesssim_{d,p_0} \nu_n^{d/4-1}.
	$$
	Hence
	$$
	d_1(\nu_n,\nu_n')\lesssim_{d,p_0}|\nu_n^{d/4}-(\nu_n')^{d/4}|,
	$$
	and 
	$
	\gamma_{2/3}([1,n^{2/d}],d_1)\lesssim_{d,p_0}|(n^{2/d})^{d/4}-1^{d/4}|\leq \sqrt{n}
	$.
	
	Then consider $\gamma_1([1,n^{2/d}],d_2)$. We have
	\begin{align*}
	d_2^2(\nu_n,\nu_n')\leq \|K_{\nu_n'}-K_{\nu_n}\|_{L_2}^2=1-\frac{\EE G_{\nu_n}G_{\nu_n'}}{\sqrt{\EE G_{2\nu_n}\EE G_{2\nu_n'}}}\leq -\log\left(\frac{\EE G_{\nu_n}G_{\nu_n'}}{\sqrt{\EE G_{2\nu_n}\EE G_{2\nu_n'}}}\right)
	\end{align*}
	Let $f_1(\nu_n)=\int\exp\left(-\frac{\|\omega\|^2}{8\nu_n}\right)\|\Fcal{p_0}(\omega)\|^2d\omega$. Then
	$$
	\log \left(\EE G_{2\nu_n}\right)=\frac{d}{2}\log\left(\frac{\pi}{2\nu_n}\right)+\log f_1(\nu_n)
	$$
	and hence
	\begin{align*}
	&-\log\left(\frac{\EE G_{\nu_n}G_{\nu_n'}}{\sqrt{\EE G_{2\nu_n}\EE G_{2\nu_n'}}}\right)\\=&\frac{d}{2}\left(-\frac{\log \nu_n+\log \nu_n'}{2}+\log\left(\frac{\nu_n+\nu_n'}{2}\right)\right)+\left(\frac{\log f_1(\nu_n)+\log f_1(\nu_n')}{2}-\log f_1\left(\frac{\nu_n+\nu_n'}{2}\right)\right).
	\end{align*}
	
	Note that
	$$
	\frac{\log f_1(\nu_n)+\log f_1(\nu_n')}{2}-\log f_1\left(\frac{\nu_n+\nu_n'}{2}\right)=\frac{1}{2}\int_0^{\frac{\nu_n'-\nu_n}{2}}\int_{-u}^u \left(\log f_1\left(\frac{\nu_n'+\nu_n}{2}+v\right)\right)''dvdu.
	$$
	For any $\nu_n\geq 1$,
	$$
	\left(\log f_1(\nu_n)\right)''=\frac{f_1(\nu_n)f_1''(\nu_n)-(f_1'(\nu_n))^2}{f_1^2(\nu_n)}\leq \frac{f_1''(\nu_n)}{f_1(\nu_n)},
	$$
	and
	$$
	f_1''(\nu_n)=\int\exp\left(-\frac{\|\omega\|^2}{8\nu_n}\right)\left(\frac{\|\omega\|^4}{64\nu_n^4}-\frac{\|\omega\|^2}{4\nu_n^3}\right)\|\Fcal{p_0}(\omega)\|^2d\omega\lesssim \nu_n^{-2}\|p_0\|_{L_2}^2.
	$$
	Moreover, there exists $\nu_n^*=\nu_n^*(p_0)>1$ such that $f_1(\nu_n^*)\geq \|p_0\|_{L_2}^2/2$, from which we obtain
	
	$$
	\left(\log f_1(\nu_n)\right)''\lesssim \begin{cases}
	\nu_n^{-2}\|p_0\|_{L_2}^2/f_1(1),\quad 1\leq \nu_n\leq \nu_n^*\\
	\nu_n^{-2},\quad \nu_n^*<\nu_n\leq n^{2/d}
	\end{cases},
	$$
	which suggests that for any $\nu_n,\nu_n'\in[1,\nu_n^*]$
	\begin{align*}
	d_2^2(\nu_n,\nu_n')\lesssim &
	\left(\frac{d}{2}+\frac{\|p_0\|_{L_2}^2}{f_1(1)}\right)\left(-\frac{\log \nu_n+\log \nu_n'}{2}+\log\left(\frac{\nu_n+\nu_n'}{2}\right)\right)\\\lesssim&\left(\frac{d}{2}+\frac{\|p_0\|_{L_2}^2}{f_1(1)}\right)|\log \nu_n-\log \nu_n'|,
	\end{align*}
	and for any $\nu_n,\nu_n'\in [\nu_n^*,n^{2/d}]$
	$$
	d_2^2(\nu_n,\nu_n')\lesssim \left(\frac{d}{2}+1\right)|\log \nu_n-\log \nu_n'|.
	$$
	
	Note that in addition to the bound on $d_2$ obtained above, we also have
	$$
	d_2(\nu_n,\nu_n')\leq \|\bar{K}_{\nu_n}\|_{L_2}+\|\bar{K}_{\nu_n'}\|_{L_2}\leq \|K_{\nu_n}\|_{L_2}+\|K_{\nu_n'}\|_{L_2}\leq \sqrt{2}.
	$$
	Therefore,
	\begin{align*}
	\gamma_1([1,n^{2/d}],d_2)\leq &\sum\limits_{l\geq 0}2^le_l([1,n^{2/d}],d_2)\\
	\lesssim &e_0([1,n^{2/d}],d_2)+\sum\limits_{l\geq 0}2^le_l([1,\nu_n^*],d_2)+\sum\limits_{l\geq 0}2^le_l([\nu_n^*,n^{2/d}],d_2)\\
	\lesssim & 1+\sqrt{\frac{d}{2}+\frac{\|p_0\|_{L_2}^2}{f_1(1)}}\sum\limits_{l\geq 0}2^l\sqrt{\frac{\log \nu_n^*-\log 1}{2^{2^l}}}\\&+\sqrt{\frac{d}{2}+1}\left(\sum\limits_{l\geq 0}2^l\min\left\{1,\sqrt{\frac{\log n^{2/d}-\log \nu_n^*}{2^{2^l}}}\right\}\right)\\
	\lesssim &1+\sqrt{\frac{d}{2}+\frac{\|p_0\|_{L_2}^2}{f_1(1)}}\sqrt{\log \nu_n^*}+\sqrt{\frac{d}{2}+1}\left(\sum\limits_{l\geq 0}2^l\min\left\{1,\sqrt{\frac{\log n^{2/d}}{2^{2^l}}}\right\}\right)\\
	\lesssim & 1+\sqrt{\frac{d}{2}+\frac{\|p_0\|_{L_2}^2}{f_1(1)}}\sqrt{\log \nu_n^*}+\sqrt{\frac{d}{2}+1}\left(\sum\limits_{0\leq l<l^*}2^l+\sum\limits_{l\geq l^*}2^l\sqrt{\frac{\log n^{2/d}}{2^{2^l}}}\right)\\
	\lesssim & 1+\sqrt{\frac{d}{2}+\frac{\|p_0\|_{L_2}^2}{f_1(1)}}\sqrt{\log \nu_n^*}+\sqrt{\frac{d}{2}+1}\cdot2^{l^*}
	\end{align*}
	where $l^*$ is the smallest $l$ such that $$\sqrt{\frac{\log n^{2/d}}{2^{2^l}}}\leq 1.$$
	Hence $2^{l^*}\asymp \log \log n$ and there exists $C=C(d)>0$ such that
	$$
	\gamma_1([1,n^{2/d}],d_2)\leq C(d)\log\log n
	$$
	for sufficiently large $n$.
	
	By the similar approach, we get that
	$$
	D_2\lesssim 1+\sqrt{\frac{d}{2}+\frac{\|p_0\|_{L_2}^2}{f_1(1)}}\sqrt{\log \nu_n^*}+\sqrt{\frac{d}{2}+1}\cdot l^*
	$$
	which is upper-bounded by $C(d)\log \log \log n$ for sufficiently large $n$.
	
	Therefore, we finally obtain that there exists $C(d)>0$ such that for sufficiently large $n$,
	\begin{align}\label{adapt_bound_0}
	P\left(\sup\limits_{1\leq \nu_n\leq n^{2/d}}\left|\frac{1}{n-1}\sum\limits_{i\neq j}\bar{K}_{\nu_n}(X_i,X_j)\right|\geq C(d)(\log\log n+t\log\log\log n )\right)\lesssim \exp(-t^{2/3}).
	\end{align}
	
	\paragraph{Step (\rt).} By slight abuse of notation, there exists $\nu_n^*=\nu_n^*(p_0)>1$ such that
	$$
	\frac{\EE G_{2\nu_n}(X_1,X_2)}{\EE[\bGnn(X_1,X_2)]^2}\leq 2
	$$
	for $\nu_n\geq \nu_n^*$. Therefore,
	\begin{align*}
	\tilde{T}_{n}^{\rm GOF(adapt)}\leq &\sup\limits_{1\leq \nu_n\leq \nu_n^*}\sqrt{\frac{\EE G_{2\nu_n}(X_1,X_2)}{\EE[\bGnn(X_1,X_2)]^2}}\cdot\sup\limits_{1\leq \nu_n\leq \nu_n^*}\left|\frac{1}{n-1}\sum\limits_{i\neq j}\bar{K}_{\nu_n}(X_i,X_j)\right|+\\
	&\sqrt{2}\sup\limits_{\nu_n^*\leq \nu_n\leq n^{2/d}}\left|\frac{1}{n-1}\sum\limits_{i\neq j}\bar{K}_{\nu_n}(X_i,X_j)\right|\\
	\leq &C(p_0)\sup\limits_{1\leq \nu_n\leq \nu_n^*}\left|\frac{1}{n-1}\sum\limits_{i\neq j}\bar{K}_{\nu_n}(X_i,X_j)\right|+\sqrt{2}\sup\limits_{\nu_n^*\leq \nu_n\leq n^{2/d}}\left|\frac{1}{n-1}\sum\limits_{i\neq j}\bar{K}_{\nu_n}(X_i,X_j)\right|
	\end{align*}
	for some $C(p_0)>0$.
	
	Based on arguments similar to those in the first step,
	\begin{align*}
	P\left(\sup\limits_{1\leq \nu_n\leq \nu_n^*}\left|\frac{1}{n-1}\sum\limits_{i\neq j}\bar{K}_{\nu_n}(X_i,X_j)\right|\geq C(d,p_0)t\right)\lesssim \exp(-t^{2/3})
	\end{align*}
	for some $C(d,p_0)>0$ and (\ref{adapt_bound_0}) still holds when $\nu_n$ is restricted to $[\nu_n^*,n^{2/d}]$. They together prove Lemma \ref{at4}.	

\section{Decomposition of dHSIC and Its Variance Estimation}\label{sec:HSIC_decomp}
In this section, we first derive an approximation of $\hat{\gamma^2_{\nu}}(\PP,\PP^{X^1}\otimes\cdots\otimes\PP^{X^k})$ under $H_0$ for general $k$, and then the approximation of $\var\left(\hat{\gamma^2_{\nu}}(\PP,\PP^{X^1}\otimes\cdots\otimes\PP^{X^k})\right)$ can be obtained subsequently. 

Note that
\begin{align*}
G_{\nu}(x,y)=&\int G_{\nu}(u,v)d(\delta_{x}-\PP+\PP)(u)d(\delta_y-\PP+\PP)(v)\\
=&\bar{G}_{\nu}(x,y)+(\EE G_{\nu}(x,X)-\EE G_{\nu}(X,X'))+(\EE G_{\nu}(y,X)-\EE G_{\nu}(X,X'))+\EE G_{\nu}(X,X').
\end{align*}
Similarly write 
$$
G_{\nu}(x,(y^1,\cdots,y^k))=\int G_{\nu}(u,(v^1,\cdots,v^k))d(\delta_x-\PP+\PP)d(\delta_{y^1}-\PP^{X^1}+\PP^{X^1})\cdots d(\delta_{y^k}-\PP^{X^k}+\PP^{X^k})
$$
and expand it as the summation of all $l$-variate centered components where $l\leq k+1$. Do the same expansion to $G_{\nu}((x^1,\cdots,x^k),(y^1,\cdots,y^k))$ and write it as the summation of all $l$-variate centered components where $l\leq 2k$. Plug these expansions in $\hat{\gamma_{\nu}^2}(\PP,\PP^{X^1}\otimes\cdots\otimes \PP^{X^k})$ and denote the summation of all $l$-variate centered components in such expression of $\hat{\gamma_{\nu}^2}(\PP,\PP^{X^1}\otimes\cdots\otimes \PP^{X^k})$ by $D_l(\nu)$ for $l\leq 2k$. Let the remainder $R_n=\sum\limits_{l=3}^{2k}D_l(\nu)$ so that
$$
\hat{\gamma_{\nu}^2}(\PP,\PP^{X^1}\otimes\cdots\otimes \PP^{X^k})=\gamma_{\nu}^2(\PP,\PP^{X^1}\otimes\cdots\otimes \PP^{X^k})+D_1(\nu)+D_2(\nu)+R_n.
$$
Straightforward calculation yields the following facts:
\begin{itemize}
	\item $\EE(R_n)^2\lesssim_k n^{-3}\left(\EE G_{2\nu}(X_1,X_2)+\prod\limits_{l=1}^k\EE G_{2\nu}(X_1^l,X_2^l)\right)$;
	\item under the null hypothesis, $D_1(\nu)=0$ and 
	$$
	D_2(\nu)=\frac{1}{n(n-1)}\sum_{1\leq i\neq j\leq n}G_{\nu}^*(X_i,X_j)
	$$
	where
	\begin{align*}
	G_{\nu}^*(x,y)=\bar{G}_{\nu}(x,y)-\sum\limits_{\substack{1\leq j\leq k}}g_j(x^j,y)-\sum\limits_{\substack{1\leq j\leq k}}g_j(y^j,x)+\sum\limits_{\substack{1\leq j_1,j_2\leq k}}g_{j_1,j_2}(x^{j_1},y^{j_2}).
	\end{align*}
\end{itemize}

\begin{proof}[Proof of Lemma \ref{le:var}]

Observe that under $H_0$, 
$$
\var\left(\hat{\gamma_{\nu}^2}(\PP,\PP^{X^1}\otimes\cdots\otimes \PP^{X^k})\right)=\EE(D_2(\nu))^2+\EE\left(R_n\right)^2=\frac{2}{n(n-1)}\EE[G_{\nu}^*(X_1,X_2)]^2+\EE\left(R_n\right)^2,$$
$$\EE\left(R_n\right)^2\lesssim_k n^{-3}\EE G_{2\nu}(X_1,X_2),$$
and
\begin{align*}
&\EE [G_{\nu}^*(X_1,X_2)]^2\\
=&\EE\left(\bar{G}_{\nu}(X_1,X_2)-\sum\limits_{\substack{1\leq j\leq k}}g_j(X_1^j,X_2)\right)^2\\&-\EE\left(\sum\limits_{\substack{1\leq j\leq k}}g_{j}(X_2^{j},X_1)+\sum\limits_{\substack{1\leq j_1,j_2\leq k}}g_{j_1,j_2}(X_1^{j_1},X_2^{j_2})\right)^2\\
=&\EE \bar{G}_{\nu}^2(X_1,X_2)-2\sum\limits_{1\leq j\leq k}\EE\Big(g_j(X_1^j,X_2)\Big)^2+\sum\limits_{\substack{1\leq j_1,j_2\leq k}}\EE \Big(g_{j_1,j_2}(X_1^{j_1},X_2^{j_2})\Big)^2.
\end{align*}
They together conclude the proof.
\end{proof}

Below we shall further expand $\EE \bar{G}_{\nu}^2(X_1,X_2)$, $\EE\Big(g_j(X_1^j,X_2)\Big)^2$ and $\EE \Big(g_{j_1,j_2}(X_1^{j_1},X_2^{j_2})\Big)^2$ in Lemma \ref{le:var}, based on which consistent estimator of $\var\left(\hat{\gamma^2_{\nu}}(\PP,\PP^{X^1}\otimes\cdots\otimes\PP^{X^k})\right)$ can be derived naturally.

First,
\begin{align*}
&\EE \bar{G}_{\nu}^2(X_1,X_2)\\=&\EE G_{2\nu}(X_1,X_2)-2\EE G_{\nu}(X_1,X_2)G_{\nu}(X_1,X_3)+\left(\EE G_{\nu}(X_1,X_2)\right)^2\\
=&\prod\limits_{1\leq l\leq k}\EE G_{2\nu}(X_1^l,X_2^l)-2\prod\limits_{1\leq l\leq k}\EE G_{\nu}(X_1^l,X_2^l)G_{\nu}(X_1^l,X_3^l)+\prod\limits_{1\leq l\leq k}\left(\EE G_{\nu}(X_1^l,X_2^l)\right)^2.
\end{align*}

Second,
\begin{align*}
&\EE\Big(g_j(X_1^j,X_2)\Big)^2\\=&\EE G_{2\nu}(X_1^{j},X_2^{j})\cdot\prod\limits_{l\neq j}\EE G_{\nu}(X_1^l,X_2^l)G_{\nu}(X_1^l,X_3^l)-\prod\limits_{1\leq l\leq k}\EE G_{\nu}(X_1^l,X_2^l)G_{\nu}(X_1^l,X_3^l)\\
&-\EE G_{\nu}(X_1^{j},X_2^{j})G_{\nu}(X_1^j,X_3^{j})\cdot\prod\limits_{l\neq j}(\EE G_{\nu}(X_1^l,X_2^l))^2+\prod\limits_{1\leq l\leq k}\left(\EE G_{\nu}(X_1^l,X_2^l)\right)^2.
\end{align*}
Hence
\begin{align*}
&\sum\limits_{1\leq j\leq k}\EE\Big(g_j(X_1^j,X_2)\Big)^2\notag\\=&\left(\prod\limits_{1\leq l\leq k}\EE G_{\nu}(X_1^l,X_2^l)G_{\nu}(X_1^l,X_3^l)\right)\left(\sum\limits_{1\leq j\leq k}\frac{\EE G_{2\nu}(X_1^j,X_2^j)}{\EE G_{\nu}(X_1^j,X_2^j)G_{\nu}(X_1^j,X_3^j)}-k\right)\notag\\
&-\left(\prod\limits_{1\leq l\leq k}\left(\EE G_{\nu}(X_1^l,X_2^l)\right)^2\right)\left(\sum\limits_{1\leq j\leq k}\frac{\EE G_{\nu}(X_1^j,X_2^j)G_{\nu}(X_1^j,X_3^j)}{(\EE G_{\nu}(X_1^j,X_2^j))^2}-k\right).
\end{align*}
Finally,
\begin{align*}
&\EE \Big(g_{j_1,j_2}(X_1^{j_1},X_2^{j_2})\Big)^2\\=&\begin{cases}
\EE (\bar{G}_{\nu}(X_1^{j_1},X_2^{j_1}))^2\cdot\prod\limits_{l\neq j_1}\left(\EE G_{\nu}(X_1^{l},X_2^l)\right)^2,\quad j_1=j_2\\\prod\limits_{l\in\{j_1,j_2\}}
\Big(\EE G_{\nu}(X_1^l,X_2^l)G_{\nu}(X_1^{l},X_3^l)-(\EE G_{\nu}(X_1^l,X_2^l))^2\Big)\prod\limits_{l\neq j_1,j_2}\left(\EE G_{\nu}(X_1^{l},X_2^l)\right)^2,\quad j_1\neq j_2.
\end{cases}
\end{align*}
Hence
\begin{align*}\label{HSIC_expand3}
&\sum\limits_{\substack{1\leq j_1,j_2\leq k}}\EE \Big(g_{j_1,j_2}(X_1^{j_1},X_2^{j_2})\Big)^2\\=&\left(\prod\limits_{1\leq l\leq k}\left(\EE G_{\nu}(X_1^l,X_2^l)\right)^2\right)\Bigg(\sum\limits_{1\leq j_1\leq k}\frac{\EE (\bar{G}_{\nu}(X_1^{j_1},X_2^{j_1}))^2}{(\EE G_{\nu}(X_1^{j_1},X_2^{j_1}))^2}\\&+\sum\limits_{1\leq j_1\neq j_2\leq k}\prod\limits_{l\in \{j_1,j_2\}}\left(\frac{\EE G_{\nu}(X_1^{l},X_2^{l})G_{\nu}(X_1^{l},X_2^{l})}{\left(\EE G_{\nu}(X_1^{l},X_2^l)\right)^2}-1\right)\Bigg).
\end{align*}

Then the consistent estimator $\tilde{s}_{n,\nu}^2$ of $\EE\left(G_{\nu}^*(X_1,X_2)\right)^2$ is constructed by replacing
$$
\EE G_{2\nu}(X_1^l,X_2^l),\quad\EE G_{\nu}(X_1^l,X_2^l)G_{\nu}(X_1^l,X_3^l),\quad (\EE G_{\nu}(X_1^l,X_2^l))^2
$$
in the above expansions of 
$$
\EE \bar{G}_{\nu}^2(X_1,X_2),\quad \sum\limits_{1\leq j\leq k}\EE\Big(g_j(X_1^j,X_2)\Big)^2,\quad\sum\limits_{\substack{1\leq j_1,j_2\leq k}}\EE \Big(g_{j_1,j_2}(X_1^{j_1},X_2^{j_2})\Big)^2
$$
with the corresponding unbiased estimators
\begin{gather*}
\frac{1}{n(n-1)}\sum\limits_{1\leq i\neq j\leq n}G_{2\nu_n}(X_i^l,X_j^l),\quad \frac{(n-3)!}{n!}\sum\limits_{\substack{1\le i,j_1,j_2\le n\\ |\{i,j_1,j_2\}|=3}}G_{\nu_n}(X_i^l,X_{j_1}^l)G_{\nu_n}(X_i^l,X_{j_2}^l)
\\\frac{(n-4)!}{n!}\sum\limits_{\substack{1\le i_1,i_2,j_1,j_2\le n\\ |\{i_1,i_2,j_1,j_2\}|=4}}G_{\nu_n}(X_{i_1}^l,X_{j_1}^l)G_{\nu_n}(X_{i_2}^l,X_{j_2}^l)
\end{gather*}
for $1\leq l\leq k$. Again, to avoid a negative estimate of the variance, we can replace $\tilde{s}^2_{n,\nu_n}$ with $1/n^2$ whenever it is negative or too small. Namely, let 
$$
\hat{s}^2_{n,\nu_n}=\max\left\{\tilde{s}^2_{n,\nu_n},1/n^2\right\},$$
and estimate $\var\left(\hat{\gamma^2_{\nu}}(\PP,\PP^{X^1}\otimes\cdots\otimes\PP^{X^k})\right)$ by $2\hat{s}_{n,\nu}^2/(n(n-1))$.

Therefore for general $k$, the single kernel test statistic and the adaptive test statistic are constructed as
$$
T_{n,\nu_n}^{\rm IND}={n\over\sqrt{2}}\hat{s}^{-1}_{n,\nu_n}\hat{\gamma_{\nu_n}^2}(\PP,\PP^{X^1}\otimes\cdots\otimes\PP^{X^k})\quad {\rm and}\quad T_n^{\rm IND(adapt)}=\max_{1\le \nu_n\le n^{2/d}}T_{n,\nu_n}^{\rm IND}
$$
respectively. Accordingly, $\Phi_{n,\nu_n,\alpha}^{\rm IND}$ and $\Phi^{\rm IND(adapt)}$ can be constructed as in the case of $k=2$.

\section{Theoretical Properties of Independence Tests for General $k$}
In this section, with $\Phi_{n,\nu_n,\alpha}^{\rm IND}$ and $\Phi^{\rm IND(adapt)}$ constructed in Appendix \ref{sec:HSIC_decomp} for general $k$, we confirm that Theorem \ref{th:indnull}, Theorem \ref{th:indpower} and Theorem \ref{th:indadapt} still hold. We shall only emphasize the main differences between the new proofs and the original proofs in the case of $k=2$.
\paragraph{Under the null hypothesis:} we only need to re-ensure that $\tilde{s}_{n,\nu_n}^2$ is a consistent estimator of $\EE[G_{\nu_n}^*(X_1,X_2)]^2$. Specifically, we show that
$$
\tilde{s}_{n,\nu_n}^2/\EE[G_{\nu_n}^*(X_1,X_2)]^2\to_p 1
$$
given $1\ll \nu_n\ll n^{4/d}$ for Theorem \ref{th:indnull} and 
$$
\sup\limits_{1\leq \nu_n\leq n^{2/d}}\left|\tilde{s}_{n,\nu_n}^2/\EE[G_{\nu_n}^*(X_1,X_2)]^2-1\right|=o_p(1)
$$
for Theorem \ref{th:indadapt}.

To prove the former one, since
$$
\frac{\EE[G_{\nu_n}^*(X_1,X_2)]^2}{(\pi/(2\nu_n))^{d/2}\|p\|_{L_2}^2}\to 1
$$
as $\nu_n\to \infty$, it suffices to show
$$
\nu_n^{d/2}\left|\tilde{s}_{n,\nu_n}^2-\EE[G_{\nu_n}^*(X_1,X_2)]^2\right|=o_p(1),
$$
which follows considering that
\begin{align}\label{terms}
\nu_n^{d_l/2}\EE G_{2\nu_n}(X_1^l,X_2^l),\quad\nu_n^{d/2}\EE G_{\nu_n}(X_1^l,X_2^l)G_{\nu_n}(X_1^l,X_3^l),\quad \nu_n^{d_l/2}(\EE G_{\nu_n}(X_1^l,X_2^l))^2
\end{align}
are all bounded and they are estimated consistently by their corresponding estimators. For example,
$$
\nu_n^{d_l/2}\EE G_{2\nu_n}(X_1^l,X_2^l)\to \left(\pi/2\right)^{d_l/2}\|p_l\|_{L_2}^2
$$
and
\begin{align*}
&\ \ \nu_n^{d_l}\EE\left(\frac{1}{n(n-1)}\sum\limits_{1\leq i\neq j\leq n}G_{2\nu_n}(X_i^l,X_j^l)-\EE G_{2\nu_n}(X_1^l,X_2^l)\right)^2\\=\ \ &\nu_n^{d_l}\var\left(\frac{1}{n(n-1)}\sum\limits_{1\leq i\neq j\leq n}G_{2\nu_n}(X_i^l,X_j^l)\right)\\
\lesssim\ \ &\nu_n^{d_l}\left(n^{-1}\EE G_{2\nu_n}(X_1^l,X_2^l)G_{2\nu_n}(X_1^l,X_3^l)+n^{-2}\EE G_{4\nu_n}(X_1^l,X_2^l)\right)\\
\lesssim_{d_l}&n^{-1}\nu_n^{d_l/4}\|p_l\|_{L_2}^3+n^{-2}\nu_n^{d_l/2}\|p_l\|_{L_2}^2\to 0.
\end{align*}

The proof of the latter one is similar. It sufficies to have
\begin{itemize}
	\item each term in (\ref{terms}) is bounded for $\nu_n\in[1,\infty)$, which immediately follows since each term is continuous and converges at $\infty$;
	\item the difference between each term in (\ref{terms}) and its corresponding estimator converges to $0$ uniformly over $\nu_n\in[1,n^{2/d}]$, the proof of which is the same with that of Lemma \ref{consistent-est-unif}.
\end{itemize}
\paragraph{Under the alternative hypothesis:} we only need to re-ensure that $\hat{s}_{n,\nu_n}$ is bounded. Specifically, we show
$$
\inf\limits_{p\in H_1^{\rm IND}(\Delta_n,s)}\frac{n\gamma_{\nu_n}^2(\PP,\PP^{X^1}\otimes\cdots\otimes \PP^{X^k})}{\left[\EE\left(\hat{s}_{n,\nu_n}^2\right)^{1/k}\right]^{k/2}}\rightarrow \infty
$$
for Theorem \ref{th:indpower} and 
\begin{align}\label{bdd}
\inf_{s\geq d/4}\inf_{p\in H_1^{\rm IND}(\Delta_{n,s};s)}P\left(
\hat{s}_{n,\nu_n(s)'}^2\leq 2M^2 (2\nu_n(s)'/\pi)^{-d/2}\right)\to 1
\end{align}
for Theorem \ref{th:indadapt}, where $\nu_n(s)'=(\log\log n/n)^{-4/(4s+d)}$.

The former one holds because
\begin{align*}
\EE\left(\hat{s}_{n,\nu_n}^2\right)^{1/k}\leq\ &\EE\left(\max\left\{\left|\tilde{s}_{n,\nu}^2\right|,1/n^2\right\}\right)^{1/k}\\
\leq\ &\EE\left|\tilde{s}_{n,\nu}^2\right|^{1/k}+n^{-2/k}\\\lesssim_k &\left(\prod\limits_{l=1}^k\EE G_{2\nu_n}(X_1^l,X_2^l)\right)^{1/k}+n^{-2/k}\\
\leq\ & \left(M^2(\pi/(2\nu_n))^{d/2}\right)^{1/k}+n^{-2/k}. 
\end{align*}
where the second to last inequality follows from generalized H\"{o}lder's inequality. For example,
$$
\EE\left(\prod\limits_{l=1}^k\frac{1}{n(n-1)}\sum\limits_{1\leq i\neq j\leq n}G_{2\nu_n}(X_i^l,X_j^l)\right)^{1/k}\leq \left(\prod\limits_{l=1}^k\EE G_{2\nu_n}(X_1^l,X_2^l)\right)^{1/k}.
$$

To prove the latter one, note that for $\nu_n=\nu_n(s)'$, all three terms in (\ref{terms}) are bounded by $M_l^2(\pi/2)^{d_l/2}$ and the variances of their corresponding estimators are bounded by
$$
C(d_l)\left(n^{-1}\left(\nu_n(s)'\right)^{d_l/4}M_l^3+n^{-2}\left(\nu_n(s)'\right)^{d_l/2}M_l^2\right)=o(1)
$$
uniformly over all $s$. Therefore,
$$
\inf_{s\geq d/4}\inf_{p\in H_1^{\rm IND}(\Delta_{n,s};s)}P\left(
\left(\nu_n(s)'\right)^{d/2}\left|\tilde{s}_{n,\nu_n(s)'}^2-\EE [G_{\nu_n(s)'}^*(Y_1,Y_2)]^2\right|\leq M^2 (\pi/2)^{d/2}\right)\to 1
$$
where $Y_1,Y_2\sim_{\rm iid} \PP^{X^1}\otimes\cdots\otimes \PP^{X^k}$. Further considering that
$$
\EE [G_{\nu_n(s)'}^*(Y_1,Y_2)]^2\leq \EE[\bar{G}_{\nu_n(s)'}(Y_1,Y_2)]^2\leq M^2(\pi/(2\nu_n(s)'))^{d/2}
$$
and that
$$
1/n^2=o((\nu_n(s)')^{-d/2})
$$
uniformly over all $s$, we prove (\ref{bdd}).
\end{document}